\tikzset{%
	every neuron/.style={
		circle,
		draw,
		minimum size=1cm
	},
	neuron missing/.style={
		draw=none, 
		scale=4,
		text height=0.333cm,
		execute at begin node=\color{black}$\vdots$
	},
}
\crefname{hypothesis}{Hypothesis}{Hypotheses}
\newtheorem{assumption}[theorem]{Assumption}
\numberwithin{equation}{section}
\numberwithin{table}{section}
\newcommand{\norm}[1]{\left\| #1 \right\|}
\newcommand{\abs}[1]{ \left\vert #1 \right\vert}
\newcommand{\set}[1]{ \left\{ #1  \right\}}
\newcommand{\f}[1]{\mathbf{#1}}
\newcommand{\R}{ \mathbb{R}}
\newcommand{\N}{ \mathbb{N}}
\newcommand{\om}{\Omega}
\definecolor{myblue}{rgb}{0.00,0.44700,0.78}
\title{Optimization with learning-informed differential equation constraints and its applications\thanks{Submitted to the editors 25th August 2020.
\funding{This work is supported by a MATHEON Research Center project CH12 funded by the Einstein Center for Mathematics (ECMath) Berlin, and also funded by the Deutsche Forschungsgemeinschaft (DFG, German Research Foundation) under Germany's Excellence Strategy – The Berlin Mathematics Research Center MATH+ (EXC-2046/1, project ID: 390685689).}}}
\author{Guozhi Dong\footnotemark[2]\,\;\footnotemark[3]
	\and Michael Hinterm\"uller\footnotemark[3]\,\;\thanks{Institute for Mathematics, Humboldt University of Berlin,
		Unter den Linden 6, 10099 Berlin, Germany (\email{guozhi.dong@hu-berlin.de}, \email{hint@hu-berlin.de}).}
	\and Kostas Papafitsoros\thanks{Weierstrass Institute for Applied Analysis and Stochastics (WIAS), Mohrenstrasse 39, 10117 Berlin, Germany (\email{guozhi.dong@wias-berlin.de},
		\email{michael.hintermueller@wias-berlin.de}, \email{kostas.papafitsoros@wias-berlin.de}).}}
\begin{document}

\maketitle

\begin{abstract}
Inspired by applications in optimal control of semilinear elliptic partial differential equations and physics-integrated imaging,  differential equation constrained optimization problems with constituents that are only accessible through data-driven techniques are studied. A particular focus is on the analysis and on numerical methods for problems with machine-learned components. For a rather general context, an error analysis is provided, and particular properties resulting from artificial neural network based approximations are addressed. Moreover, for each of the two inspiring applications analytical details are presented and numerical results are provided.
\end{abstract}

\begin{keywords}
Artificial neural network; optimal control; semilinear PDEs;  integrated physics-based imaging; learning-informed model; quantitative MRI; semi-smooth Newton; SQP algorithm
\end{keywords}

\begin{AMS}
 49M15, 65J15, 65J20, 65K10, 90C30, 35J61, 68T07
\end{AMS}

\section{Introduction}

Consider the optimization problem  
\begin{equation}
	\label{eq:optimal_control}
	\begin{aligned}
		&\text{minimize}\;   J(y,u):=\frac{1}{2}\norm{Ay-g}^2_H + \frac{\alpha}{2} \norm{u}_U^2 ,\quad\text{over }(y,u)\in Y\times U,\\
		&\text{subject to (s.t.)}\; e(y,u)=0,  \text{ and } u \in \mathcal{C}_{ad},
	\end{aligned}
\end{equation}
where $y\in Y$,  $u \in U$  are the state and control variables, respectively, with $Y$ a suitable Banach space and $U$ a Hilbert space. Moreover, $g \in H$ denotes given data with $H$ the pertinent Hilbert space, $\alpha>0$ is the control cost, and $A:Y \to H$ is a bounded linear (observation) operator, i.e., $A\in\mathcal{L}(Y,H)$. While in \eqref{eq:optimal_control} feasible controls $u$ are confined to a nonempty, closed, and convex set $\mathcal{C}_{ad}$, the relationship between admissible controls and states is through the equality constraint associated with a possibly nonlinear operator $e: Y\times U \to Z$, with $Z$ a Banach space. Often, $e(y,u)=0$ is given by (a system of) ordinary or partial differential equations (ODEs or PDEs) describing, e.g., underlying physics. For the ease of discussion we assume that, for given $u\in U$, there is a unique $y\in Y$ such that $e(y,u)=0$. This allows us to write
\begin{equation*}
	y=\Pi(u),
\end{equation*}
where $\Pi$ denotes the (implicitly defined) control-to-state map with $e(\Pi(u),u)=0$.  
Given $\Pi$, a popular approach in the study of \eqref{eq:optimal_control} is based on the reduced problem
\begin{equation}
	\label{eq:optimal_control_reduced}
	\begin{aligned}
		& \text{minimize}\;   \mathcal{J}(u):=\frac{1}{2}\norm{Q(u)-g}^2_H + \frac{\alpha}{2} \norm{u}_U^2 ,\quad\text{over }u\in U,\\
		&\text{s.t. }\; u \in \mathcal{C}_{ad},
	\end{aligned}
\end{equation}
where $Q:=A \Pi(\cdot): U\to H$. Note that $\mathcal{J}(u)=J(\Pi(u),u)$.

In general, \eqref{eq:optimal_control} or its reduced form \eqref{eq:optimal_control_reduced} 
represent a class of optimal control problems, for which a plethora of studies exist in the literature; see, e.g., \cite{Tro10} for an introduction and \cite{MR1669395, MR0271512, MR2183776} as well as the references therein for more details. In contrast, in many applications one is confronted with control problems where $e$ or, alternatively, $\Pi$ are only partly known along with measurement data which can be exploited to obtain (approximations) of missing information. Such minimization tasks have barely been treated in the literature and motive the present work.
In order to inspire such a setting, we briefly highlight here two classes of applications which will be further studied from Section \ref{sec:appl_1} onwards.

Our first motivating example is related to the fact that many phenomena in engineering, physics or life sciences, for instance, can be modeled by elliptic partial differential equations of the form
\begin{equation}\label{mh.semilinear}\left.
	\begin{aligned}
		&Ly + f(x,y)=Ru  & \quad \text{ in } \; \Omega, \\
		&b(x)\partial_\nu y + d(x)y=0  & \quad \text{ on } \;  \partial  \Omega .
	\end{aligned}\right\}
\end{equation}
Here $L$ denotes a second-order linear elliptic partial differential operator with measurable, bounded and symmetric coefficients, $f(x,y)$ is a nonlinearity, and $R$ models the impact of the control action $u$. Moreover, $b$ and $d$ are given coefficient functions. The
set $\Omega\subset\R^d$ represents the underlying domain with boundary $\partial\Omega$, and $\partial_\nu$ denotes the derivative along the outward (unit) normal $\nu$ to $\Omega$. Often the precise form of $f$ is unknown, but rather only accessible through a data set $D:=\{(y_i,u_i): e(y_i,u_i)\approx 0, i=1,\ldots,{n_{D}}\}$, ${n_{D}}\in\mathbb{N}$, i.e., given pre-specified control actions, one collects associated state responses (through measurements or computations). Utilizing data-driven approximation techniques such as artificial neural networks (ANNs), one may then get access to a data-driven model of $f$ which can be used even outside the range of the data set $D$ to yield a valid model of the underlying real-world process. In such a setting, associated optimal control problems depend on approximations $\mathcal{N}$ of $f$, and theoretical investigations as well as numerical solutions of the control problem need to take the construction of $\mathcal{N}$ into account.  

The second example comes from quantitative magnetic resonance imaging - qMRI. In this context, one integrates a mathematical model of the acquisition physics (the Bloch equations \cite{DonHinPap19}) into the associated image reconstruction task in order to relate qualitative information (such as the net magnetization $y=\rho m$) with objective, tissue dependent quantitative information (such as $T_1$ and $T_2$, the longitudinal and the transverse relaxation times, respectively, or the proton spin density $\rho$). This model is then used to obtain quantitative reconstructions from subsampled measurement data $g$ in k-space by a variational approach. The provision of such quantitative reconstructions is highly important, e.g., for subsequent automated image classification procedures to identify tissue anomalies. Moreover, in \cite{DonHinPap19} it is demonstrated that such an {\it integrated physics-based} approach is superior to the state-of-the-art technique of magnetic resonance fingerprinting (MRF) \cite{Ma_etal13} and its improved variants  \cite{DavPuyVanWia14, MazWeiTalEld18}.
Specifically in MRI, acquisition data are obtained at different pre-specified times (read-out times) $t_{1}, \ldots, t_{L}$, during which the magnetization of the matter is excited through the control of a time dependent external magnetic field $B$. Given $u=(T_1,T_2,\rho)$, the magnetization time vector at $t_{1}, \ldots t_{L}$ is then given by 
$y=\Pi(u)$,
where $\Pi$ denotes the solution map associated with a discrete version of the Bloch equations. Crucial to this approach is the fact that, at least for specific variations of the external magnetic field $B$, explicit formulas for the solution map of the Bloch equations are available. For instance, in \cite{DavPuyVanWia14} and \cite{DonHinPap19} Inversion Recovery balanced Steady-State Free Precession (IR-bSSFP) \cite{Sche99} is used which involves certain flip angle sequence patterns that characterize the external magnetic field $B$. These flip angle patterns  allow for a simple approximation of the solutions of the Bloch equations at the read-out times through a recurrence formula. 
However, in general, it is quite typical that for more complicated external magnetic fields one does not have at hand explicit representations for the Bloch solution map. More generally, for most nonlinear differential equations (including those relevant in image reconstruction tasks) explicit solution maps might be too complicated to  obtain.  However,  one may employ numerical methods to approximate their solutions $(y_i)_{i=1}^{{n_{D}}}$ given a specific (coarse) selection of parameters $(u_i)_{i=1}^{{n_{D}}}$ within a certain range. This generates a data set $D$ which is then employed in a learning procedure to generate an ANN based approximation $\Pi_{\mathcal{N}}$ of $\Pi$. This gives rise to $Q_{\mathcal{N}}:=A\Pi_{\mathcal{N}}$ in \eqref{eq:optimal_control_reduced} and requires an associated analytical as well as numerical treatment of the (reduced) minimization problem.

In general, learning-informed models are getting nowadays increasingly more popular in different scientific fields. Some works focus on the design of ANNs, e.g., by  constructing novel network architectures \cite{BakGupNaiRas17}, or on developing fast and reliable algorithms in order to train ANNs more efficiently \cite{BotCurNoc18}.  
More relevant for our present work,  ANNs have been applied to the simulation of differential dynamical systems \cite{QinWuXiu18} and high dimensional partial differential equations \cite{HanJenE18,SirSpi18}, as well as to the coefficient estimation in nonlinear partial differential equations  \cite{LonLuMaDon18}, also in connection with optimal control  \cite{E17,HabRut18} and inverse problems \cite{ArrMaaOekSch19}.
Note, however, that in our approach neural networks do not aim to approximate the solution of \eqref{eq:optimal_control}, but rather they are part of the physical process encoded in $\Pi$. We emphasize that this is a different strategy to some of the recent works \cite{AdlOek17,Bal_etal18} in the literature that focus on learning the entire model or reconstruction process. More precisely, in the present work we suggest to use an operator $\Pi_{\mathcal{N}}$ that is induced by trained neural networks modelling the equality constraint (with, e.g., $f$ replaced by an ANN-based model $\mathcal{N}$ in our example \eqref{mh.semilinear}) or its (implicitly defined) solution map $\Pi$.
In such a setting, existence, convergence, stability and error bounds of the corresponding approximations need to be analyzed.  Particularly, we are interested in the error propagation from the neural network based approximation to the solution of the optimal control problem.
Moreover, in the case of partial differential equations, when replacing $f$ by $\mathcal{N}$, the regularity of solutions has to be checked carefully before approaching the optimal control problem.
Further, from a numerical viewpoint, in order to use derivative-based numerical methods, it is important for these approximating solution maps to have certain smoothness.
This aspect is typically tied to the regularity of the activation functions employed in ANN approximations.

The remaining part of the paper is organized as follows:
Section \ref{sec:analysis} provides a general error analysis for solutions of the proposed learning-informed framework.
Some basic definitions and approximation properties of artificial neural networks are recalled in Section \ref{sec:ANN}, and
Section \ref{sec:appl_1} presents a concrete case study on optimal control of semilinear elliptic equations with general nonlinearities, including both error analysis and numerical results.
Section \ref{sec:appl_2} contains another case study on quantitative magnetic resonance imaging, again including computational results.

\section{Mathematical analysis of the general framework problem}
\label{sec:analysis}
We start our analysis by studying \eqref{eq:optimal_control_reduced}
or its variant where $Q$, the original physics-based operator, is replaced by a (data-driven) approximation. 
Existence of a solution to \eqref{eq:optimal_control_reduced} follows from standard arguments which are provided here for the sake of completeness.
\begin{proposition}\label{pro:existence_wsc}
	Suppose that  $Q$ is weakly-weakly sequentially closed, i.e., if $u_{n}\overset{U}{\rightharpoonup} u$ and $Q(u_{n})\overset{H}{\rightharpoonup}\bar{g}$, then $\bar{g}=Q(u)$.
	Then \eqref{eq:optimal_control_reduced}  admits a solution $\bar u\in U$.	
	In the special case where $\mathcal{C}_{ad}$ is a bounded set of a subspace $\hat{U}$ which is compactly embedded into $U$, it suffices that  $Q$ is strongly-weakly sequentially closed to guarantee existence of a solution to  \eqref{eq:optimal_control_reduced}. 
\end{proposition}
\begin{proof}
	Suppose that  $Q$ is weakly-weakly sequentially closed and let $(u_{n})_{n\in\mathbb{N}}\subset \mathcal{C}_{ad}$ be an infimizing sequence for \eqref{eq:optimal_control_reduced}. Since $\alpha>0$,  $(u_n)_{n\in\mathbb{N}}$ is bounded in $U$,  and thus  we can extract an (unrelabelled) weakly convergent subsequence,  i.e., $u_{n}\overset{U}{\rightharpoonup} \bar u$ for some $\bar u\in U$. Since $\mathcal{C}_{ad}$ is strongly closed and convex, it is weakly closed and therefore $\bar u\in \mathcal{C}_{ad}$. Moreover, since the sequence $(Q(u_{n}))_{n\in\mathbb{N}}$ is also bounded in $Y$, passing to a subsequence if necessary, we get that there exists a $\bar{g}\in H$ such that $Q(u_{n})\overset{H}{\rightharpoonup} \bar{g}$. Due to the weak sequential closedness  we have $\bar{g}=Q(\bar u)$. Finally, from the weak lower semicontinuity of  $\|\cdot\|_{H}$ and $\|\cdot\|_{U}$ we have $\mathcal{J}(\bar u) \le  \liminf_{n\to\infty} \mathcal{J}(u_{n})= \inf_{u\in \mathcal{C}_{ad}}  \mathcal{J}(u)$
	and hence $\bar u$ is a solution of \eqref{eq:optimal_control_reduced}. 
	
	For the special case let $(u_{n})_{n\in\mathbb{N}}$ again be an infimizing sequence for \eqref{eq:optimal_control_reduced}. Due to the compact embedding, we have that $(u_{n})_{n\in\mathbb{N}}$ has an (unrelabelled) subsequence such that $u_{n}\to \bar u$ strongly in $U$ as $n\to \infty$. Then the proof follows the same steps as above.
\end{proof}

\begin{remark}\label{rem:continuity}
	We  note here that in many examples in optimal control of (semilinear) PDEs, the control-to-state map  actually maps $U$ to a solution space $Y$ which is of higher regularity than $H$ and even compactly embeds into it; e.g., $Y:= H^{1}(\om)\hookrightarrow L^{2}(\om)=:H$. Provided that the control-to-state map is bounded, in that case weak convergence in $U$ results, up to subsequences, in strong convergence in $H$ with the latter used to show closedness of the control-to-state operator.
\end{remark}

Assuming that $Q$ is Fr\'echet differentiable with derivative $Q'(\cdot)\in\mathcal{L}(U,H)$, the first-order optimality condition of \eqref{eq:optimal_control_reduced} is
\begin{equation}\label{eq:first_optimality}
	\langle \mathcal{J}^\prime (\bar u),u-\bar u\rangle_{U^\ast,U}\geq 0  \quad \text{ for all } \; u\in \mathcal{C}_{ad}, 
\end{equation}
where $\mathcal{J}'(\bar u)\in\mathcal{L}(U,\mathbb{R})=:U^\ast$ is the Fr\'echet derivative of $\mathcal{J}$ at $\bar u$, and $\langle\cdot,\cdot\rangle_{U^\ast,U}$ denotes the duality pairing between $U$ and its dual $U^\ast$.
Utilizing the structure of $\mathcal{J}$ we get
\begin{align*}
	&\big\langle(Q^\prime(\bar u))^\ast \iota_H^{-1}(Q(\bar u)-g)  + \alpha \iota_U^{-1}\bar u, u-\bar u \big\rangle_{U^{\ast}, U} \ge 0\quad \text{for all }\; u\in \mathcal{C}_{ad},
\end{align*}
or alternatively
\[ \bar u = \mathcal{P}_{\mathcal{C}_{ad}}\left (-\frac{\iota_U(Q^\prime (\bar u))^\ast \iota_H^{-1}(Q(\bar u)-g) }{\alpha }\right),\]
where $\mathcal{P}_{\mathcal{C}_{ad}}$ is the projection in $U$ onto $\mathcal{C}_{ad}$, and $\iota_H:H^*\to H$ as well as $\iota_U:U^*\to U$ are Riesz isomorphisms, respectively. For ease of notation, however, we will leave off the Riesz maps in what follows whenever there is no confusion.

We now  proceed to the error analysis of \eqref{eq:optimal_control_reduced}, where we
assume  that $(Q_{n})_{n\in\mathbb{N}}$ is a family of operators  approximating $Q$,  and clarify the convergence of the associated minimizers $u_n\in\mathcal{C}_{ad}$.
\begin{theorem}\label{thm:convergence}
	Let $Q$ and $Q_{n}$, $n\in\mathbb{N}$, be weakly sequentially closed operators with 
	\begin{equation} \label{eq:operator_err}
		\|Q(u)-Q_{n}(u)\|_{H}\le \epsilon_{n}, \quad 
		\text{ for all } \;  u\in \mathcal{C}_{ad},
	\end{equation}
	and  {$\epsilon_{n}\downarrow 0$. Furthermore let  $(u_{n})_{n\in\N}$  be a sequence of minimizers of \eqref{eq:optimal_control_reduced}} with $Q$ replaced by $Q_n$ for all $n\in\mathbb{N}$.
	Then, we have the strong convergences
	\begin{equation}\label{eq:strong_convergence}
		u_{n} \to \bar{u}   \; \text{ in }\; U,\quad\text{ and }\quad   Q_n(u_{n}) \to  Q(\bar{u}) \; \text{ in } \;H, \quad \text{ as } \; n \to \infty,
	\end{equation}
	where $\bar{u}$ is a minimizer of \eqref{eq:optimal_control_reduced}. 
\end{theorem}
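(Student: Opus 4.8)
The plan is to run a standard consistency (in the spirit of $\Gamma$-convergence) argument: establish a priori boundedness, extract weak subsequential limits, identify the limit as a minimizer via weak lower semicontinuity, and finally upgrade weak convergence to strong convergence using the Hilbert-space structure of $U$ and $H$. Throughout, write $\mathcal{J}_n$ for the objective in \eqref{eq:optimal_control_reduced} with $Q$ replaced by $Q_n$. First I would fix an arbitrary $u_0\in\mathcal{C}_{ad}$ and exploit the minimality of $u_n$ together with \eqref{eq:operator_err}: since $\|Q_n(u_0)-g\|_H\le\epsilon_n+\|Q(u_0)-g\|_H$ is bounded uniformly in $n$, we get $\mathcal{J}_n(u_n)\le\mathcal{J}_n(u_0)\le C$. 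As $\alpha>0$, this bounds $(u_n)$ in $U$, and it simultaneously bounds $(Q_n(u_n))$ in $H$ through $\tfrac12\|Q_n(u_n)-g\|_H^2\le\mathcal{J}_n(u_n)$. Passing to a (relabelled) subsequence, I extract $u_n\rightharpoonup\bar u$ in $U$ and $Q_n(u_n)\rightharpoonup\bar g$ in $H$, with $\bar u\in\mathcal{C}_{ad}$ because $\mathcal{C}_{ad}$ is convex and strongly closed, hence weakly closed.

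The crux is to show $\bar g=Q(\bar u)$, so that in fact $Q_n(u_n)\rightharpoonup Q(\bar u)$. Here the two indices, the argument $u_n$ and the operator $Q_n$, must be decoupled. I would write $Q(u_n)=Q_n(u_n)+\big(Q(u_n)-Q_n(u_n)\big)$; by \eqref{eq:operator_err} the bracketed term vanishes in $H$-norm, hence weakly, so $Q(u_n)\rightharpoonup\bar g$ as well. Now both $u_n\rightharpoonup\bar u$ and $Q(u_n)\rightharpoonup\bar g$ hold for the \emph{fixed} limit operator $Q$, and its weak sequential closedness delivers $\bar g=Q(\bar u)$. This is the step I expect to be the main obstacle: weak sequential closedness is precisely the hypothesis tailored to make it go through, and it is essential that it be applied to $Q$ rather than to the varying $Q_n$.

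Next I would verify optimality and, at the same time, extract norm convergence. Weak lower semicontinuity of $\|\cdot\|_H$ and $\|\cdot\|_U$ gives $\mathcal{J}(\bar u)\le\liminf_n\mathcal{J}_n(u_n)$. For the matching upper bound I compare against any fixed $u\in\mathcal{C}_{ad}$: minimality yields $\mathcal{J}_n(u_n)\le\mathcal{J}_n(u)$, while \eqref{eq:operator_err} gives $\mathcal{J}_n(u)\to\mathcal{J}(u)$, so $\limsup_n\mathcal{J}_n(u_n)\le\mathcal{J}(u)$. Choosing $u=\bar u$ sandwiches the quantities, proving both that $\bar u$ is a minimizer of \eqref{eq:optimal_control_reduced} and that $\mathcal{J}_n(u_n)\to\mathcal{J}(\bar u)$. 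Splitting this limit into its two nonnegative summands $\tfrac12\|Q_n(u_n)-g\|_H^2$ and $\tfrac{\alpha}{2}\|u_n\|_U^2$, each individually weakly lower semicontinuous along the sequence, a routine $\liminf/\limsup$ argument forces $\|u_n\|_U\to\|\bar u\|_U$ and $\|Q_n(u_n)-g\|_H\to\|Q(\bar u)-g\|_H$ separately.

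Finally, in the Hilbert spaces $U$ and $H$, weak convergence together with convergence of norms upgrades to strong convergence, yielding $u_n\to\bar u$ in $U$ and $Q_n(u_n)\to Q(\bar u)$ in $H$, which is \eqref{eq:strong_convergence}. Strictly speaking this argument produces the convergences along the extracted subsequence; to conclude for the full sequence I would invoke the standard subsequence principle, namely that every subsequence admits a further subsequence enjoying these properties, so that the whole sequence converges once the minimizer is unique (absent uniqueness, the conclusion is read up to subsequences converging to a minimizer).
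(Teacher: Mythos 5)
Your proof is correct, and its skeleton coincides with the paper's: boundedness from minimality plus the uniform bound \eqref{eq:operator_err}, extraction of weak subsequential limits, identification of the weak limit of $Q_n(u_n)$ with $Q(\bar u)$ by decoupling the two indices through \eqref{eq:operator_err} and then invoking weak sequential closedness of $Q$ (the paper tests against a fixed element of $H$; you pass the strong null convergence of $Q(u_n)-Q_n(u_n)$ through the weak limit --- the same mechanism), and the minimizer property via weak lower semicontinuity combined with $\mathcal{J}_n(u)\to\mathcal{J}(u)$ for fixed $u\in\mathcal{C}_{ad}$. Where you genuinely deviate is the upgrade to strong convergence: the paper argues by contradiction, supposing $\limsup_n\|u_n\|_U=\mu>\|\bar u\|_U$ and deriving $\limsup_k\frac12\|Q_{n_k}(u_{n_k})-g\|_H^2<\frac12\|Q(\bar u)-g\|_H^2$, which contradicts weak lower semicontinuity; you instead prove convergence of the total energy $\mathcal{J}_n(u_n)\to\mathcal{J}(\bar u)$ by a sandwich and then split it into its two weakly lower semicontinuous summands, forcing each to converge separately, concluding with the Radon--Riesz property. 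Both arguments rest on identical ingredients (minimality tested at $\bar u$, $Q_n(\bar u)\to Q(\bar u)$, weak lower semicontinuity of both norms), so the difference is one of organization rather than substance, but your version is the more modular one: it yields both norm convergences simultaneously, whereas the paper first obtains $\|u_n\|_U\to\|\bar u\|_U$ and then recovers the $H$-norm convergence by a separate two-sided estimate. Your closing remark on the subsequence principle is also a point of rigor the paper leaves implicit: its proof, like yours, works along unrelabelled subsequences, so the stated convergence is subsequential unless the minimizer of \eqref{eq:optimal_control_reduced} is unique --- a caveat the paper only acknowledges later, in Section 4, when discussing the nonmonotone case.
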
	
\begin{proof}
	As $(u_{n})_{n\in\N}$ is a sequence of minimizers, we have for $C:=\max_{n} \epsilon_{n}<\infty$ and every $u\in \mathcal{C}_{ad}$:
	\begin{equation*}
		\begin{aligned}
			\frac{1}{2} \norm{Q_{n}(u_{n}) -g }_{H}^2+\frac{\alpha}{2}\norm{ u_{n}}_{U}^2
			\leq     \norm{Q(u) -g}_{H}^2+ C^2+\frac{\alpha}{2}\norm{ u}_{U}^2 .
		\end{aligned}
	\end{equation*}	
	Note also that $\|Q(u_{n})\|_{H}\le \|Q_{n}(u_{n})\|_{H}+ \epsilon_{n}$.
	Hence $(u_{n})_{n\in\mathbb{N}}$, $(Q(u_{n}))_{n\in\mathbb{N}}$ and $(Q_{n}(u_{n}) )_{n\in\mathbb{N}}$  are bounded sequences and therefore there exist (unrelabelled) subsequences and $\bar u\in U$ such that
	$u_{n}  \stackrel{U}{\rightharpoonup} \bar{u}$ with $\bar{u}\in\mathcal{C}_{ad}$ by weak closedness, $Q(u_{n})\stackrel{H}{\rightharpoonup} Q(\bar{u})$, and ${Q_{n}(u_{n}) \stackrel{H}{\rightharpoonup} Q(\bar{u})}$, 
	where we have also used that $Q$ is weakly sequentially closed for the second limit.
	For the third limit, note that for an arbitrary $\tilde{g}\in H$, by using \eqref{eq:operator_err}, we get
	\begin{align*}
		\abs{( Q_{n}(u_{n}) -Q(\bar{u}),\tilde{g})_H}
		&\le \abs{( Q_{n}(u_{n}) -Q(u_{n}),\tilde{g})_H}+ \abs{( Q(u_{n}) -Q(\bar{u}),\tilde{g})_H}\\
		&\le \epsilon_{n} \|g\|_{H} + \abs{( Q(u_{n}) -Q(\bar{u}),\tilde{g})_H} \to 0,
	\end{align*}
	where $(\cdot,\cdot)_H$ denotes the inner product in $H$.

	Using the lower semicontinuity of the norms, we have for every $u\in \mathcal{C}_{ad}$ that
	\begin{align*}
		\frac{1}{2} \norm{Q(\bar{u}) -g}_{H}^{2} &+ \frac{\alpha}{2}  \norm{\bar{u}}_{U}^2
		\le  \liminf_{n}  \frac{1}{2}\norm{Q_{n}(u_{n})-g}_{H}^{2} + \frac{\alpha}{2}  \norm{u_n}_{U}^{2}\\
		&\le \lim_{n} \frac{1}{2}\norm{Q_{n}(u)-g}_{H}^{2} + \frac{\alpha}{2}  \norm{u}_{U}^{2}
		= \frac{1}{2} \norm{Q(u)-g}_{H}^{2} +  \frac{\alpha}{2} \norm{u}_{U}^{2}.
	\end{align*}
	Thus, we conclude that $\bar{u}$ is a minimizer of \eqref{eq:optimal_control_reduced}.
	We still need to show that $u_{n} \to \bar{u}$ strongly in $U$.
	Suppose there exists a $\mu>0$ such that $\mu=\limsup_{n}\norm{u_n}_{U}> \norm{\bar{u}}_{U}$.
	Let  $(u_{n_{k}})_{k\in\mathbb{N}}$ be a subsequence with $\norm{u_{n_{k}}}_{U} \to \mu$ as $k\to \infty$.
	Then we have
	\begin{equation}\label{eq:upper_limit}
		\begin{aligned}
			\limsup_k \;\frac{1}{2}\norm{Q_{n_{k}}(u_{n_{k}}) -g}_{H}^2
			&=\limsup_k \left(\frac{1}{2}\norm{Q_{n_{k}}(u_{n_{k}}) -g}_{H}^2+\frac{\alpha}{2} (\norm{u_{n_{k}}}_{U}^2 - \mu^2)\right)\\
			& \leq \lim_k \frac{1}{2}\norm{Q_{n_{k}}(\bar{u}) -g}_{H}^2 +\frac{\alpha}{2}( \norm{\bar{u}}_{U}^2-\mu^2)\\
			&=  \frac{1}{2}\norm{Q(\bar{u}) -g}_{H}^2 +\frac{\alpha}{2}( \norm{\bar{u}}_{U}^2-\mu^2)
			<\frac{1}{2}\norm{Q(\bar{u}) -g}_{H}^2 .
		\end{aligned}
	\end{equation}
	This contradicts the lower semicontinuity of the norm and $Q_{n}(u_{n})  \rightharpoonup Q(\bar{u}) $.
	 Thus, $\|u_n\|_{U} \to  \|\bar{u}\|_{U}$ as $n\to\infty$.
	Together with the weak convergence $u_{n}  \rightharpoonup \bar{u}$ we get $u_{n} \to \bar{u}$ strongly in $U$ and further
	\[\limsup_n \norm{ Q_{n}(u_{n}) -  g }_{H}\leq \norm{Q(\bar{u})-g}_H \leq \liminf_n \norm{ Q_{n}(u_{n}) -  g }_{H} .\]
	Hence, $\lim_n \norm{Q_{n}(u_{n})}_H=\norm{Q(\bar{u})}_H $, which 
	implies the second limit in \eqref{eq:strong_convergence}.
\end{proof}

For a quantitative convergence result, we invoke the following assumptions which are motivated by the analysis of nonlinear inverse problems  \cite{Han10,LuFle12}.
\begin{assumption}\label{assum:operator_derivative}
	Assume that $Q$ is Fr\'echet differentiable and that there exists $L_0>0$ such that
	\begin{equation}\label{eq:der_bounded}
		\norm{ Q^\prime(u)}_{\mathcal{L}(U,H)}\leq L_0 \quad  \text{ for  all  }\; u\in \mathcal{C}_{ad}.
	\end{equation}
	Assume further that the Fr\'echet derivative is locally Lipschitz with modulus $L_1>0$, i.e.,
	\begin{equation}\label{eq:second_Lip}
		\norm{Q^\prime (u_a)-  Q^\prime (u_b)}_{\mathcal{L}(U,H)} \leq L_1\norm{u_a-u_b}_U, \quad  \text{ for all }\; u_a, u_b \in \mathcal{C}_{ad}.
	\end{equation}
	Moreover, let the Fr\'echet derivatives of $Q$ and $Q_{n}$ satisfy the following error bounds 
	\begin{equation} \label{eq:operator_deriv_err}
		\norm{Q^\prime(u)-Q^\prime_{n}(u)}_{\mathcal{L}(U,H)}\leq \eta_n, \; 
		\text{ for all } \;  u\in \mathcal{C}_{ad},
	\end{equation}
	where $\eta_n\in (0,1)$ for all $n\in\mathbb{N}$ and $\eta_n \downarrow 0$.
	Finally, let the two constants $L_0$ and $L_1$ satisfy
	\begin{equation}\label{eq:Lip_const_condition2}
		L_0(L_0+1) +L_1 \norm{Q(\bar{u})-g}_H<\alpha,
	\end{equation}
	with $\bar{u}$ being the minimizer of \eqref{eq:optimal_control_reduced}.
\end{assumption}
The condition in \eqref{eq:der_bounded} indicates that
\begin{equation}\label{eq:Q_Lip}
	\norm{Q(u_a)-  Q(u_b)}_{H} \leq L_0\norm{u_a-u_b}_U, \quad  \text{ for all }\; u_a,u_b \in \mathcal{C}_{ad}.
\end{equation}	

\begin{theorem}\label{thm:error_bound2}
	Let the assumptions of Theorem \ref{thm:convergence} as well as  Assumption \ref{assum:operator_derivative} hold. 
	Then, we have 
	\begin{equation}\label{eq:error_bound2}
		\norm{u_{n}- \bar{u}}_U\leq \frac{1 }{ \alpha -L_0(L_0+\eta_n) -L_1\norm{Q(\bar{u})-g}_H}\left(  L_0 \epsilon_n + \epsilon_{n}\eta_n+ \norm{Q(\bar{u})-g}_H \eta_n  \right) .
	\end{equation}
\end{theorem}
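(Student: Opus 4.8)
The plan is to derive the first-order optimality conditions for both the exact and the approximate problems, subtract them, and absorb the resulting quadratic term into the left-hand side. First I would write the variational inequality \eqref{eq:first_optimality} (in the Riesz-suppressed form indicated just below it) for the exact minimizer $\bar u$, namely $(Q'(\bar u)^*(Q(\bar u)-g)+\alpha\bar u,\, u-\bar u)_U \ge 0$ for all $u\in\mathcal{C}_{ad}$, together with the analogous inequality for $u_n$, the minimizer of \eqref{eq:optimal_control_reduced} with $Q$ replaced by $Q_n$ (which is Fr\'echet differentiable, since $Q_n'$ appears in \eqref{eq:operator_deriv_err}), namely $(Q_n'(u_n)^*(Q_n(u_n)-g)+\alpha u_n,\, u-u_n)_U \ge 0$ for all $u\in\mathcal{C}_{ad}$. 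Testing the first with $u=u_n$ and the second with $u=\bar u$, adding, and using $\alpha(\bar u - u_n, u_n-\bar u)_U=-\alpha\norm{u_n-\bar u}_U^2$ yields
\[
\alpha\norm{u_n-\bar u}_U^2 \le \big(Q'(\bar u)^*(Q(\bar u)-g)-Q_n'(u_n)^*(Q_n(u_n)-g),\, u_n-\bar u\big)_U .
\]

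The core of the argument is to estimate this cross term. Writing $A:=Q'(\bar u)$, $B:=Q_n'(u_n)$, $a:=Q(\bar u)-g$, $b:=Q_n(u_n)-g$, I would use the algebraic identity $A^*a-B^*b=(A^*-B^*)a+B^*(a-b)$ and the adjoint relation to pass to inner products in $H$, splitting the cross term into $(a,(A-B)(u_n-\bar u))_H$ and $(a-b,B(u_n-\bar u))_H$. Each remaining factor is then controlled by a triangle-inequality splitting: for the operator difference, $\norm{A-B}_{\mathcal{L}(U,H)}\le \norm{Q'(\bar u)-Q'(u_n)}_{\mathcal{L}(U,H)}+\norm{Q'(u_n)-Q_n'(u_n)}_{\mathcal{L}(U,H)}\le L_1\norm{u_n-\bar u}_U+\eta_n$ by \eqref{eq:second_Lip} and \eqref{eq:operator_deriv_err}; for the data difference, $\norm{a-b}_H=\norm{Q(\bar u)-Q_n(u_n)}_H\le L_0\norm{u_n-\bar u}_U+\epsilon_n$ by \eqref{eq:Q_Lip} and \eqref{eq:operator_err}; and for the operator norm of $B$, $\norm{B}_{\mathcal{L}(U,H)}\le L_0+\eta_n$ by \eqref{eq:der_bounded} and \eqref{eq:operator_deriv_err}.

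Substituting these bounds and abbreviating $s:=\norm{u_n-\bar u}_U$ and $r:=\norm{Q(\bar u)-g}_H$, I expect to reach
\[
\alpha s^2 \le r(L_1 s+\eta_n)s + (L_0 s+\epsilon_n)(L_0+\eta_n)s .
\]
Dividing by $s$ (the case $s=0$ being trivial) and collecting the terms proportional to $s$ gives $s\big(\alpha-L_0(L_0+\eta_n)-L_1 r\big)\le L_0\epsilon_n+\epsilon_n\eta_n+r\eta_n$, which, upon moving the coefficient to the denominator, is exactly \eqref{eq:error_bound2}.

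The one place requiring care is the positivity of that denominator, since the final rearrangement (and the meaningfulness of the bound) depends on it. Because $\eta_n\in(0,1)$, one has $L_0(L_0+\eta_n)<L_0(L_0+1)$, so that $\alpha-L_0(L_0+\eta_n)-L_1\norm{Q(\bar u)-g}_H > \alpha-L_0(L_0+1)-L_1\norm{Q(\bar u)-g}_H>0$ by the standing condition \eqref{eq:Lip_const_condition2}. This positivity is really the only genuine obstacle; the rest is bookkeeping with the triangle inequality, and the add-subtract decomposition of $A^*a-B^*b$ is the single structural idea that decouples the cross term into a \emph{derivative-error} contribution (carrying $\eta_n$) and a \emph{data-error} contribution (carrying $\epsilon_n$).
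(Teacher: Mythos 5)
Your proof is correct: every estimate holds, the constants come out exactly as in \eqref{eq:error_bound2}, and the positivity of the denominator is justified precisely as required by \eqref{eq:Lip_const_condition2} together with $\eta_n<1$. The route differs from the paper's in one structural point. The paper encodes first-order optimality for both problems via the projection formulas $\bar u = \mathcal{P}_{\mathcal{C}_{ad}}\left(-(Q'(\bar u))^* w\right)$ and $u_n = \mathcal{P}_{\mathcal{C}_{ad}}\left(-(Q_n'(u_n))^* w_n\right)$ with $w=(Q(\bar u)-g)/\alpha$, $w_n=(Q_n(u_n)-g)/\alpha$, and then invokes non-expansiveness of $\mathcal{P}_{\mathcal{C}_{ad}}$ to obtain the \emph{linear} inequality $\norm{u_n-\bar u}_U\le \norm{(Q_n'(u_n))^*w_n-(Q'(\bar u))^*w}_{U^*}$ directly; you instead cross-test the two variational inequalities, which produces the \emph{quadratic} inequality $\alpha s^2\le(\cdot\,,u_n-\bar u)_U$ and requires the (harmless) division by $s$. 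From that point on the two arguments coincide: your splitting $(A^*-B^*)a+B^*(a-b)$ is, up to the factor $1/\alpha$ and an ordering flip, the paper's splitting $(Q_n'(u_n))^*(w_n-w)+\bigl((Q_n'(u_n))^*-(Q'(\bar u))^*\bigr)w$, and both are controlled by the same bounds \eqref{eq:der_bounded}, \eqref{eq:second_Lip}, \eqref{eq:operator_deriv_err}, \eqref{eq:Q_Lip}, and \eqref{eq:operator_err}, yielding identical constants. What the projection route buys is bypassing the division step and staying with inequalities linear in $\norm{u_n-\bar u}_U$ throughout; what your route buys is that it uses only the variational inequality itself — the standard monotonicity-style trick — without appealing to the metric projection, so it transfers verbatim to settings where a projection characterization is unavailable or awkward.
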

\begin{proof}
	First-order optimality  yields
	\begin{equation}
		\bar{u} =\mathcal{P}_{\mathcal{C}_{ad}}\left(- (Q^\prime(\bar{u}))^\ast w \right) \quad  \text{ and } \quad
		u_{n} =\mathcal{P}_{\mathcal{C}_{ad}}\left( - (Q_{n}^\prime(u_{n}))^\ast w_{n} \right),
	\end{equation}
	where $w=\frac{Q(\bar{u})-g}{\alpha}$ and $w_{n} =\frac{ Q_{n}(u_n)-g}{\alpha} $.
	The inequalities in \eqref{eq:der_bounded}, \eqref{eq:second_Lip},  \eqref{eq:operator_deriv_err}, and  \eqref{eq:Q_Lip}  and the fact that $\norm{Q'(u)}_{\mathcal{L}(U,H)}=\norm{(Q'(u))^{\ast}}_{\mathcal{L}(H^*,U^*)}$ imply
	\begin{equation*}
		\begin{aligned}
			\norm{ u_{n} - \bar{u}}_{U}
			\leq &\norm{ (Q_{n}^\prime(u_{n}))^\ast w_{n}- (Q^\prime(\bar{u}))^\ast w  }_{U^*} \\
			\leq &
			\norm{ (Q_{n}^\prime(u_{n}))^\ast \left(w_{n}-   w \right) }_{U^*}
			+\norm{ \left((Q_{n}^\prime(u_{n}))^\ast - (Q^\prime(\bar{u}))^\ast \right)   w  }_{U^*}\\
			\leq & (L_0+\eta_n) \norm{ w_{n}-   w }_{H}
			+  \norm{w}_{H} \eta_n
			+ L_1 \norm{ w}_{H} \norm{u_{n}-  \bar{u}}_{U} \\
			\leq & \frac{L_0+\eta_n }{\alpha}\norm{Q(\bar{u})-  Q_{n}(u_{n})}_{H}
			+\norm{w}_{H}\eta_n+ L_1 \norm{ w}_{H} \norm{u_{n}-  \bar{u}}_{U} \\
			\leq & \frac{L_0+\eta_n }{\alpha}(\epsilon_n +L_0\norm{u_{n} - \bar{u}}_{U})
			+  \norm{w}_{H}\eta_n + L_1 \norm{ w}_{H} \norm{u_{n}-  \bar{u}}_{U}.
		\end{aligned}
	\end{equation*}	
	Moving all terms that involve $\norm{u_{n}-  \bar{u}}_{U} $ to the left-hand side we get
	\[ (1- \frac{L_0(L_0+\eta_n)}{\alpha } -L_1\norm{w}_{H}) \norm{ u_{n} - \bar{u}}_{U} \leq \frac{L_0}{\alpha}\epsilon_n +\frac{\epsilon_{n}\eta_n}{\alpha}+ \norm{w}_{H}\eta_n.  \]
	Finally, using $w=\frac{Q(\bar u)-g}{\alpha}$ we find \eqref{eq:error_bound2}.
\end{proof}

Observe that for $Q(\bar{u})=g$ (perfect matching) 
the a priori bound is essentially controlled by $\epsilon_n$  only:
\[\norm{u_{n}-  \bar{u}}_{U} \leq  \frac{L_0+\eta_n  }{ \alpha -L_0(L_0+\eta_n) }\epsilon_{n}.  \]
Note further that the error bound depends on a sufficiently large $\alpha$ such that \eqref{eq:Lip_const_condition2} is satisfied.

In the special case where $\mathcal{C}_{ad}$ is redundant, i.e., when $\mathcal{J}'(\bar{u})=0$, improved error bounds can be derived.
This is in particular true for perfect matching which also allows to relax the conditions on $\alpha$. 
\begin{theorem}\label{thm:error_bound}
	Let the assumptions of Theorem \ref{thm:convergence} hold and suppose that the Lipschitz condition  \eqref{eq:second_Lip} is satisfied with the constant $L_1$ such that
	\begin{equation}\label{eq:Lip_const_condition}
		L_1 \norm{Q(\bar{u})-g}_H<  \alpha.
	\end{equation}
	If $\mathcal{J}'(\bar u)=0$, then for  sufficiently large $n\in\mathbb{N}$ we have the following error bound 
	\begin{equation}\label{eq:error_bound}
		\norm{u_{n}- \bar{u}}_U\leq \sqrt{ \frac{3}{\alpha -L_1 \norm{g-Q(\bar{u})}_H}}  \sqrt{ \epsilon_n^2   +2\norm{Q(\bar{u})-g}_H^2}.
	\end{equation}
\end{theorem}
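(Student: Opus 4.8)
The plan is an energy (function-value) argument that squeezes $\mathcal{J}(u_n)-\mathcal{J}(\bar u)$ between a coercive lower bound coming from the stationarity $\mathcal{J}'(\bar u)=0$ and an upper bound coming from optimality of $u_n$ for $\mathcal{J}_n$ together with \eqref{eq:operator_err}, and then completes the square. A pleasant feature is that it bypasses the variational-inequality estimates of Theorem~\ref{thm:error_bound2}: neither the uniform derivative bound $L_0$ nor the derivative error $\eta_n$ of Assumption~\ref{assum:operator_derivative} enters, and $Q_n$ need not even be differentiable. Throughout set $r:=Q(\bar u)-g$, $d_n:=Q(u_n)-Q(\bar u)$, and let $\rho_n:=d_n-Q'(\bar u)(u_n-\bar u)$ be the first-order Taylor remainder of $Q$ at $\bar u$; since $\bar u,u_n\in\mathcal{C}_{ad}$ and $\mathcal{C}_{ad}$ is convex, the whole segment $[\bar u,u_n]$ lies in $\mathcal{C}_{ad}$, so \eqref{eq:second_Lip} together with the fundamental theorem of calculus gives $\norm{\rho_n}_H\le\tfrac{L_1}{2}\norm{u_n-\bar u}_U^2$.

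First I would expand the two squared norms of $\mathcal{J}$ about $\bar u$ to obtain the exact identity
\[
\mathcal{J}(u_n)-\mathcal{J}(\bar u)=\tfrac12\norm{d_n}_H^2+(d_n,r)_H+\alpha(\bar u,u_n-\bar u)_U+\tfrac{\alpha}{2}\norm{u_n-\bar u}_U^2.
\]
Here the decisive ingredient is $\mathcal{J}'(\bar u)=0$, i.e. $(Q'(\bar u))^\ast r=-\alpha\bar u$: it turns $\alpha(\bar u,u_n-\bar u)_U$ into $-(r,Q'(\bar u)(u_n-\bar u))_H=-(r,d_n-\rho_n)_H$, which cancels the first-order term $(d_n,r)_H$ and leaves the clean identity
\[
\mathcal{J}(u_n)-\mathcal{J}(\bar u)=\tfrac12\norm{d_n}_H^2+(\rho_n,r)_H+\tfrac{\alpha}{2}\norm{u_n-\bar u}_U^2.
\]

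Next, optimality of $u_n$ for $\mathcal{J}_n$ gives $\mathcal{J}_n(u_n)\le\mathcal{J}_n(\bar u)$, so telescoping through $\mathcal{J}_n$ and using the elementary identity $\tfrac12\norm{a}_H^2-\tfrac12\norm{b}_H^2=(a-b,a)_H-\tfrac12\norm{a-b}_H^2$ with arguments differing by at most $\epsilon_n$ (by \eqref{eq:operator_err}) yields
\[
\mathcal{J}(u_n)-\mathcal{J}(\bar u)\le\epsilon_n\norm{Q(u_n)-g}_H+\epsilon_n\norm{r}_H+\epsilon_n^2\le\epsilon_n\norm{d_n}_H+2\epsilon_n\norm{r}_H+\epsilon_n^2.
\]
Inserting the identity of the previous step, moving $\epsilon_n\norm{d_n}_H$ to the left and using $\tfrac12\norm{d_n}_H^2-\epsilon_n\norm{d_n}_H\ge-\tfrac12\epsilon_n^2$ together with $(\rho_n,r)_H\ge-\tfrac{L_1}{2}\norm{r}_H\norm{u_n-\bar u}_U^2$ produces the master inequality
\[
\tfrac12\big(\alpha-L_1\norm{r}_H\big)\norm{u_n-\bar u}_U^2\le 2\epsilon_n\norm{r}_H+\tfrac32\epsilon_n^2,
\]
whose prefactor is positive precisely by \eqref{eq:Lip_const_condition}. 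Since $\epsilon_n\downarrow0$, for $n$ large enough one has $2\epsilon_n\norm{r}_H\le 3\norm{r}_H^2$ (automatic if $r=0$), so the right-hand side is at most $\tfrac32(\epsilon_n^2+2\norm{r}_H^2)$; dividing and taking square roots gives exactly \eqref{eq:error_bound}.

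As the chain is a string of sharp but elementary estimates, the genuinely load-bearing step is the cancellation in the second paragraph: $\mathcal{J}'(\bar u)=0$ is what annihilates the uncontrolled first-order term $(d_n,r)_H$, and it is the sole reason the bound beats Theorem~\ref{thm:error_bound2} and relaxes \eqref{eq:Lip_const_condition2} to the weaker \eqref{eq:Lip_const_condition}. The two places I expect to need care are the remainder estimate for $\rho_n$ (which is what lets the hypotheses be cut down to mere Fr\'echet differentiability of $Q$ with \eqref{eq:second_Lip}) and the final passage to ``sufficiently large $n$'', which is exactly where the model mismatch $\norm{r}_H$ is absorbed into the additive $2\norm{r}_H^2$; in the perfect-matching case $r=0$ this term disappears, no largeness of $n$ is needed, and one recovers the genuine rate $\norm{u_n-\bar u}_U\le\sqrt{3/\alpha}\,\epsilon_n$.
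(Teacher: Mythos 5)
Your proof is correct, and while it rests on the same three pillars as the paper's argument---minimality of $u_n$ for $\mathcal{J}_n$, stationarity $\mathcal{J}'(\bar u)=0$, and the Lipschitz--Taylor remainder bound $\norm{\rho_n}_H\le\tfrac{L_1}{2}\norm{u_n-\bar u}_U^2$ (which, as you note, holds for \emph{all} $n$ since convexity of $\mathcal{C}_{ad}$ keeps the segment $[\bar u,u_n]$ inside the set where \eqref{eq:second_Lip} applies)---it organizes them in a genuinely different way. The paper rearranges $\mathcal{J}_n(u_n)\le\mathcal{J}_n(\bar u)$ into \eqref{eq:error1}, writes $\bar u=-\iota_U(Q'(\bar u))^*w$, and then estimates the cross term $\alpha\langle\iota_U^{-1}\bar u,\bar u-u_n\rangle_{U^\ast,U}$ by Cauchy--Schwarz followed by three applications of Young's inequality in \eqref{eq:error2}; the operator error enters through $\norm{Q(u_n)-Q_n(u_n)}_H$ and $\norm{Q_n(\bar u)-Q(\bar u)}_H$, and the spurious term $\tfrac1\alpha\norm{Q_n(u_n)-g}_H^2$ produced by Young is cancelled against the left-hand side of \eqref{eq:error1}. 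You instead keep the expansion of $\mathcal{J}$ about $\bar u$ exact, let stationarity annihilate the first-order term $(Q(u_n)-Q(\bar u),\,Q(\bar u)-g)_H$ outright, and feed $\epsilon_n$ in through function values by telescoping $\mathcal{J}-\mathcal{J}_n$. Both routes land on the identical constant in \eqref{eq:error_bound}, but your bookkeeping buys two things. First, your master inequality $\tfrac12\bigl(\alpha-L_1\norm{r}_H\bigr)\norm{u_n-\bar u}_U^2\le 2\epsilon_n\norm{r}_H+\tfrac32\epsilon_n^2$ (with $r=Q(\bar u)-g$) is strictly sharper than the stated bound: its right-hand side vanishes as $\epsilon_n\downarrow 0$, giving $\norm{u_n-\bar u}_U=\mathcal{O}(\sqrt{\epsilon_n})$ even for nonzero residual, and \eqref{eq:error_bound} is recovered only after the deliberate weakening $2\epsilon_n\norm{r}_H\le 3\norm{r}_H^2$. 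Second, it isolates exactly where ``sufficiently large $n$'' is needed---only for that last absorption, and not at all when $r=0$, consistent with \eqref{eq:error_bound_zero_res}. One small caveat: your claim that this route ``bypasses'' the machinery of Theorem~\ref{thm:error_bound2} is equally true of the paper's own proof, which likewise uses neither $L_0$, $\eta_n$, nor differentiability of $Q_n$; the genuine distinction is your exact cancellation versus the paper's Cauchy--Schwarz/Young chain, not the set of hypotheses consumed.
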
	
\begin{proof}
	Since $u_{n}$ is a minimizer for every $n\in\mathbb{N}$,  we have that $\mathcal{J}_n(u_n)\leq \mathcal{J}_n(\bar u)$ with $\mathcal{J}_n(u):=J(Q_n(u),u)$.
	Adding $\frac{\alpha}{2}(\norm{ u_n- \bar{u}}_{U}^2 -  \norm{u_n}_{U}^2)$ to both sides of the inequality gives 
	\begin{equation}\label{eq:error1}
		\frac{1}{2} \norm{Q_{n}(u_{n}) -g }_{H}^2+\frac{\alpha}{2}\norm{ u_n- \bar{u}}_{U}^2 \leq  \frac{1}{2} \norm{Q_{n}(\bar{u}) -g}_{H}^2+\alpha \langle \iota_U^{-1} \bar{u},\bar{u}-  u_{n} \rangle_{U^\ast,U}.
	\end{equation}	
	Using Theorem \ref{thm:convergence}, Taylor's expansion and \eqref{eq:second_Lip}, we get for  sufficiently large $n\in\mathbb{N}$
	\[Q(u_{n}) - Q(\bar{u})=Q^\prime(\bar{u})(u_{n}-\bar{u}) +q(u_{n},\bar{u}), \text{
		where } \norm{q(u_{n},\bar{u})}_{H}\leq  \frac{L_1}{2}\norm{ u_{n}-\bar{u}}_{U}^2 .\]

	By our assumptions and first-order optimality we have 
	$\bar{u} =   -\iota_U(Q^\prime(\bar{u}))^\ast w$   
	where  	$w=\alpha^{-1}(Q(\bar{u})-g)$
	with  $L_1 \norm{w}_{H}< 1$ because of \eqref{eq:Lip_const_condition}.
	This leads to 
	\begin{equation}\label{eq:error2}
		\begin{aligned}
			& \langle \iota_U^{-1} \bar{u},\bar{u}-  u_{n} \rangle_{U^\ast,U} =  \left( -w,Q^\prime(\bar{u})(\bar{u}-  u_{n}) \right)_H \leq  \norm{w}_{H}\norm{Q^\prime(\bar{u})(\bar{u}-  u_{n})}_{H} \\
			\leq &  \norm{w}_{H}\left(\frac{L_1}{2}\norm{ u_{n}-\bar{u}}_{U}^2
			+ \norm{Q(u_{n}) - Q_{n}(u_{n}) }_{H} + \norm{ Q_{n}(u_{n}) -g }_{H} +\norm{g - Q(\bar{u})}_{H} \right)\\
			\leq &  \frac{\norm{w}_{H}L_1 }{2}\norm{ u_{n}-\bar{u}}_{U}^2 + \frac{1}{2}\left(\alpha \norm{w}^2  + \frac{1}{\alpha}\norm{ Q_n(u_{n}) -g }^2 \right)  \\
			& + \left(\alpha \norm{w}_{H}^2  + \frac{1}{2\alpha} \norm{Q(u_{n}) - Q_n(u_{n}) }_{H}^2   + \frac{1}{2\alpha}\norm{g - Q(\bar{u})}_{H}^2  \right) ,
		\end{aligned}
	\end{equation}
	where we have used the identity $ab\le \frac{1}{2\alpha} a^{2}+ \frac{\alpha}{2} b^{2} $.
	Returning to \eqref{eq:error1} and using \eqref{eq:error2}, 
	we derive
	\begin{equation}
		\label{eq:error3}
		\begin{aligned}
			\norm{ u_n- \bar{u}}_{U}^2 \leq &\frac{1}{\alpha} \norm{Q_{n}(\bar{u}) -g }_{H}^2+ 
			\norm{w}_{H}L_1 \norm{ u_{n}-\bar{u}}_{U}^2 + 3\alpha  \norm{w}_{H}^2 \\
			& + \frac{1}{\alpha }( \norm{Q(u_{n}) - Q_n(u_{n}) }_{H}^2   + \norm{g - Q(\bar{u})}_{H}^2)\\
			\leq & \frac{2}{\alpha }\norm{Q_{n}(\bar{u}) -  Q(\bar{u})}_{H}^2 + 
			\norm{w}_{H}L_1 \norm{ u_{n}-\bar{u}}_{U}^2 + 3\alpha \norm{w}_{H}^2 \\
			& + \frac{1}{\alpha}\norm{Q(u_{n}) - Q_n(u_{n}) }_{H}^2   +\frac{3}{\alpha} \norm{g - Q(\bar{u})}_{H}^2.
		\end{aligned}
	\end{equation}
	
	Taking into account \eqref{eq:Lip_const_condition}, we get
	\[
	\begin{aligned}
	\norm{ u_n- \bar{u}}_{U}^2 \leq \frac{1}{\left( 1- \norm{w}_{H}L_1 \right)} \frac{3}{\alpha}\left( \epsilon_n^2 + \alpha^2\norm{w}_{H}^2 +\norm{g - Q(\bar{u})}_{H}^2\right), 
	\end{aligned}
	\]
	for  sufficiently large $n\in\mathbb{N}$.
	Replacing now $\norm{w}_{H}$ by $\frac{\norm{g-Q(\bar{u})}_{H}}{\alpha}$ yields \eqref{eq:error_bound}.
\end{proof}
Note that in the case of perfect matching $Q(\bar{u})=g$, \eqref{eq:error_bound} becomes 
\begin{equation}\label{eq:error_bound_zero_res}
	\norm{u_{n}- \bar{u}}_{U}\leq \epsilon_n\sqrt{\frac{3}{\alpha }}\quad\text{for sufficiently large }n\in\mathbb{N}.
\end{equation}
As stated earlier, our aim is to use approximations $Q_n=Q_{\mathcal{N}_n}=A\Pi_{\mathcal{N}_n}$ resulting from artificial neural networks to replace the partially unknown exact control-to-state map $\Pi$ and $Q=A\Pi$. Therefore, we next collect some fundamental properties of such neural network based approximations.

\section{A brief primer on artificial neural networks (ANNs)}\label{sec:ANN}
Here, we briefly review some (well-known) results for ANNs as they will be useful in what follows. For more introduction on ANNs, one may refer to many textbooks of this topic, e.g., \cite{GooBenCou16}.
We recall that a standard feedforward ANN with one hidden layer is  a function $\mathcal{N}: \R^{r}\to \R^{s}$ of  the following structure: 
\begin{equation}\label{NN_def_section}
	\mathcal{N}(x)= W_0\sigma (W_1x+b_1)+b_0,\quad x\in \R^{r},
\end{equation}
where $W_1\in \R^{l\times r}$, $b_1 \in \R^{l}$,  $W_0\in \R^{s\times l}$ and $b_0\in \R^{s}$. In that case we say that the hidden layer has $l$ \emph{neurons}.
Here, $\sigma: \R \to \R$ is an infinitely differentiable activation function which acts component-wise on a vector in $\R^l$. 
In the output layer, the activation function is usually the identity map, therefore ignored in \eqref{NN_def_section}, while in the other hidden layers, it involves nonlinear transformations.
Some standard smooth activation functions are the following ones:
\begin{itemize}
	\item[$\bullet$]  Sigmoid: a term denoting a family of functions, e.g., tansig ($\sigma(z)=\frac{e^{z}-e^{-z}}{e^{z}+e^{-z}}$), logsig ($\sigma(z)=\frac{1}{1+e^{-z}}$)), arctan ($\sigma(z)=\arctan(z))$,  etc.
	\item[$\bullet$]  Probability functions, e.g., softmax ($\sigma_i(z)=\frac{e^{-z_i}}{\sum_j e^{-z_j}}$). Here the index $i$ denotes the $i$-th neuron in a given layer, with the summation indexed by  $j$ being taken over all the neurons of the same layer.
\end{itemize}
We see that for the softmax function, neurons of the same layer may have different activition functions. Notice that the smoothness of the activation function is the one that determines the smoothness of $\mathcal{N}$. 

Next we state  a classical result, see, for instance, \cite[Theorem 3.1]{Pin99}. Below ``$\cdot$''  denotes the standard inner product in the underlying Euclidean space.
\begin{theorem}\label{thm:function_app}
	Let $\sigma \in C(\R)$ and consider the set
	\[R_\sigma:=\set{\mathcal{N}:\R^r \to \R \,|\, \mathcal{N}(x)=   w_0\cdot \sigma(W_1  x+ b_1) ,\text{ with }  w_0 \in \R^l, \; W_1\in \R^{l\times r},\;   b_1\in \R^l}.\]
	Then $R_\sigma$ is dense in $C(\R^r)$ in the topology of uniform convergence on compact sets if and only if $\sigma$ is not a polynomial function.
\end{theorem}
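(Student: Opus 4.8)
The plan is to prove the two directions of the stated equivalence separately, with the ``only if'' part being immediate and essentially all of the effort going into the ``if'' direction. Throughout I work in the compact-open topology, so ``dense'' means approximability in the supremum norm on each compact set, and I freely reduce density in $C(\R^r)$ to density of restrictions in $C(K)$ for each compact $K$.

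For necessity I would argue by contraposition. If $\sigma$ is a polynomial of degree $m$, then for any $w_0, W_1, b_1$ each entry of $\sigma(W_1 x + b_1)$ is $\sigma$ evaluated at an affine function of $x$, hence a polynomial in $x$ of degree at most $m$; therefore so is $w_0\cdot\sigma(W_1 x + b_1)$. Thus $R_\sigma$ is contained in the space of polynomials on $\R^r$ of degree at most $m$, a finite-dimensional---and hence closed---proper subspace of $C(\R^r)$, and so $R_\sigma$ cannot be dense.

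For sufficiency, assume $\sigma$ is not a polynomial. My first move is to reduce the $r$-dimensional statement to the one-dimensional one. Set $\mathcal{M}(\sigma):=\operatorname{span}\set{t\mapsto\sigma(\lambda t+\theta):\lambda,\theta\in\R}\subset C(\R)$, and claim it suffices to prove $\mathcal{M}(\sigma)$ dense in $C(\R)$. Indeed, given $F\in C(\R^r)$, a compact $K$ and $\epsilon>0$, I would first invoke Stone--Weierstrass to approximate $F$ by a polynomial $P$ on $K$, then use the classical polarization identity---that every homogeneous polynomial of degree $k$ is a finite linear combination of $k$-th powers of linear forms---to write $P(x)=\sum_j p_j(a_j\cdot x)$ as a finite sum of univariate polynomials $p_j$ composed with linear forms $a_j\cdot x$. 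Approximating each $p_j$ on the compact set $\set{a_j\cdot x:x\in K}$ by an element of $\mathcal{M}(\sigma)$ and substituting $t=a_j\cdot x$ produces precisely a function of the form $w_0\cdot\sigma(W_1 x+b_1)\in R_\sigma$, which completes the reduction.

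The hard part, and the core of the proof, is the univariate density of $\mathcal{M}(\sigma)$; I would argue by duality and mollification. Suppose $\mathcal{M}(\sigma)$ is not dense; then on some compact interval $K$ there is, by Hahn--Banach and the Riesz representation theorem, a nonzero signed Borel measure $\mu$ supported in $K$ with $\int\sigma(\lambda t+\theta)\,d\mu(t)=0$ for all $\lambda,\theta\in\R$. Convolving with an arbitrary $\phi\in C_c^\infty(\R)$ and using Fubini, the \emph{smooth} function $\sigma*\phi$ satisfies the same annihilation identity. Differentiating $k$ times in $\lambda$ under the integral and setting $\lambda=0$ yields $(\sigma*\phi)^{(k)}(\theta)\int t^k\,d\mu(t)=0$ for every $\theta$, $k$, and $\phi$. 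This forces a dichotomy: either every moment $\int t^k\,d\mu(t)$ vanishes, so that $\int p\,d\mu=0$ for all polynomials $p$ and hence $\mu=0$ by Weierstrass, a contradiction; or some moment $\int t^{k^*}\,d\mu(t)\ne 0$, which forces $(\sigma*\phi)^{(k^*)}\equiv 0$ for every $\phi$, i.e.\ $\sigma*\phi$ is a polynomial of degree below $k^*$ for all $\phi$. Running $\phi$ through a mollifier family $\phi_\varepsilon$, the functions $\sigma*\phi_\varepsilon$ converge to $\sigma$ uniformly on compacta while lying in the finite-dimensional (closed) space of polynomials of degree below $k^*$; hence $\sigma$ is itself such a polynomial, contradicting the hypothesis. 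Therefore $\mathcal{M}(\sigma)$ is dense, and with the reduction above the theorem follows.

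I expect the main obstacle to be the treatment of a merely continuous (possibly nowhere differentiable) $\sigma$: the differentiation trick that produces the moment conditions only applies to smooth functions, so mollification is essential---it transfers the annihilation identity from $\sigma$ onto the smooth $\sigma*\phi$, to which $\lambda$-differentiation can legitimately be applied. A secondary subtlety is controlling the polynomial degree, but the argument resolves this automatically through the specific index $k^*$, so no separate uniform degree bound is needed.
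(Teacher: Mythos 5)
Your proof is correct. The paper itself offers no proof of this statement: it quotes it as a classical result, citing Theorem 3.1 of \cite{Pin99}, whose argument in turn rests on \cite{LesLinPinSch93}. Your reconstruction follows the same overall architecture as that classical proof: necessity via the observation that $R_\sigma$ sits inside the closed, finite-dimensional space of polynomials of degree at most $\deg\sigma$; reduction of the multivariate case to the univariate one through the fact that $k$-th powers of linear forms span the homogeneous polynomials of degree $k$ (so every polynomial is a finite sum of ridge functions); and, for merely continuous $\sigma$, mollification so that the differentiation-in-$\lambda$ trick, which produces $t^k(\sigma*\phi)^{(k)}(\theta)$, can be applied legitimately. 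Where you genuinely deviate is in how the last step is organized. The proof in \cite{LesLinPinSch93} runs in primal form: one shows directly that the closure of $\mathcal{M}(\sigma)$ contains all monomials unless every $\sigma*\phi$ is a polynomial, and in that degenerate case a Baire category argument is required to obtain a \emph{uniform} bound on the degrees of the $\sigma*\phi$ before one can conclude that $\sigma$ itself is a polynomial. Your dual formulation through an annihilating measure $\mu$ makes this degree bound automatic: a single nonvanishing moment $\int t^{k^*}\,d\mu\neq 0$ forces $(\sigma*\phi)^{(k^*)}\equiv 0$ for \emph{every} test function $\phi$ simultaneously, so all mollifications lie in one finite-dimensional (hence closed) polynomial space and $\sigma$ inherits this in the locally uniform limit. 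This buys a cleaner argument --- Hahn--Banach and Riesz representation replace the category lemma --- at the mild cost of invoking duality; the supporting steps you rely on (Fubini for the convolution identity, differentiation under the integral against a compactly supported finite measure, extension to pass from density in $C(\R^r)$ to density of restrictions in $C(K)$) are all available in your setting.
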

Hence, for any $\epsilon>0$, and for any given function $f\in C(K)$, $K\subset \R^r$ compact, there exists a function $\mathcal{N}=\mathcal{N}^\epsilon \in R_\sigma$ such that
\[ \max_{ x\in K}\abs{f( x )- \mathcal{N}^\epsilon( x)}< \epsilon.\]
This approximation property can be also carried over to the derivatives of a given smooth function; see, e.g., \cite[Theorem 4.1]{Pin99}.
\begin{theorem}\label{thm:deriv_app}
	Let  $\,m=\max \set{\abs{ m^i}:\; i=1,2,\ldots, s}$, where each $m^{i}$ is a standard differentiation multi-index, and define $C^{m^1, \ldots , m^s}(\R ^{r}):=\bigcap_{i=1}^s C^{m^i}(\R^r)$.
	Then $R_\sigma$ is dense in $C^{m^1, \ldots , m^s}(\R^r)$ if $\sigma\in C^m(\R)$ is not a polynomial function.
\end{theorem}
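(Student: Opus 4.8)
The plan is to prove a statement stronger than the $C^0$-density of Theorem~\ref{thm:function_app}, namely simultaneous approximation of a function together with all of its partial derivatives up to total order $m$. First I would reduce the multi-index formulation to a single-order one: since $|m^i|\le m$ for every $i$, the topology of uniform convergence on compact sets of all partial derivatives $D^\alpha$ with $|\alpha|\le m$ is finer than the topology of $C^{m^1,\ldots,m^s}(\R^r)=\bigcap_{i=1}^s C^{m^i}(\R^r)$. Hence it suffices to show that $R_\sigma$ is dense in $C^m(\R^r)$ equipped with the $C^m$-topology on compacta. A further reduction uses that polynomials are dense in $C^m(\R^r)$ in this topology (by a Taylor/Stone--Weierstrass argument on compact sets), so the whole theorem follows once every monomial $x^\alpha$ is shown to lie in the closure $\overline{R_\sigma}$ taken in the $C^m$-topology.

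The core is therefore to exhibit each monomial as a $C^m$-limit of elements of $R_\sigma$. The mechanism is differentiation with respect to the inner parameters: for a smooth ridge function $w\mapsto \sigma(w\cdot x+\theta)$ one has, formally, $\partial_{w_j}\sigma(w\cdot x+\theta)=x_j\,\sigma'(w\cdot x+\theta)$, and iterating in the components of $w$ produces $x^\alpha\,\sigma^{(|\alpha|)}(w\cdot x+\theta)$. Each partial derivative in $w$ is a limit of divided differences, i.e. of finite linear combinations of ridge functions $\sigma(\tilde w\cdot x+\theta)$, which all belong to the building blocks of $R_\sigma$; evaluating at $w=0$ then yields $x^\alpha\,\sigma^{(|\alpha|)}(\theta)$. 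Since $\sigma$ is not a polynomial, no derivative $\sigma^{(k)}$ vanishes identically, so there is a $\theta$ with $\sigma^{(k)}(\theta)\neq 0$, and dividing by this constant isolates the pure monomial $x^\alpha$ with $|\alpha|=k$.

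The difficulty --- and the step I expect to be the main obstacle --- is the bookkeeping of smoothness: iterating the $w$-differentiation $|\alpha|$ times while demanding $C^m$-convergence in $x$ would require $\sigma\in C^{m+|\alpha|}$, and moreover extracting the high-degree monomials needed to approximate a general $C^m$ function forces us to use derivatives $\sigma^{(k)}$ of arbitrarily large order $k$, whereas we only assume $\sigma\in C^m$. I would resolve this by a mollification step carried out before differentiating. Fixing a mollifier $\phi\in C_c^\infty(\R)$, the convolution $\sigma_\phi:=\sigma*\phi$ is $C^\infty$, and $\sigma_\phi(w\cdot x+\theta)=\int \sigma\big(w\cdot x+\theta-y\big)\phi(y)\,dy$ is itself a $C^m$-limit (on compacta) of Riemann sums that are finite linear combinations of admissible ridge functions; differentiating in $x$ only produces factors $\sigma^{(j)}$ with $j\le m$, so these Riemann sums converge in the $C^m$-topology because $\sigma\in C^m$. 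Thus $\sigma_\phi(w\cdot x+\theta)\in\overline{R_\sigma}$. A standard fact (see \cite{Pin99}) guarantees that $\phi$ can be chosen so that $\sigma_\phi$ is again not a polynomial; being $C^\infty$, it may now be differentiated in $w$ arbitrarily often with $C^m$-convergence of the divided differences on compacta. Applying the parameter-differentiation argument of the previous paragraph to $\sigma_\phi$ in place of $\sigma$ then places every $x^\alpha$ in $\overline{R_\sigma}$, and the reductions above complete the proof. The two essential uses of the hypothesis ``$\sigma$ is not a polynomial'' are precisely in securing a non-polynomial mollification and in guaranteeing a nonvanishing derivative $\sigma_\phi^{(k)}$ of each order to extract each monomial.
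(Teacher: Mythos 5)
The paper does not actually prove this theorem---it quotes it as a classical result with a pointer to \cite[Theorem 4.1]{Pin99}---and your proposal is essentially a faithful reconstruction of that standard proof: mollify $\sigma$ to obtain a $C^\infty$, still non-polynomial activation whose ridge functions lie in the closure of $R_\sigma$ (Riemann sums converging in the $C^m$-topology), differentiate with respect to the weights via divided differences to produce monomials multiplied by nonvanishing derivative values, and conclude by density of polynomials. The only point to tighten is your first reduction: since $C^{m^1,\ldots,m^s}(\R^r)$ is in general strictly larger than $C^m(\R^r)$, you need polynomials to be dense in the intersection space equipped with \emph{its} topology (not just in $C^m(\R^r)$), which follows by the same mollification-plus-Taylor argument you already invoke.
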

As a consequence, for any $f\in C^{m^1, \ldots , m^s}(K)$, for every compact $K\subset \R^{r}$ and every $\epsilon>0$, there exists a function $\mathcal{N}=\mathcal{N}^\epsilon\in R_\sigma$  such that
\[\max_{ x\in K}\abs{D^{k} f( x)- D^{ k} \mathcal{N}^\epsilon( x)}< \epsilon,\]
for all multi-indices $ k$ such that $ 0\leq k \le  m^{i}$ for some $i$.

Note that 
these results imply analogous error bounds for \eqref{NN_def_section}, i.e., for the vector-valued case. They can be also  generalized to mutiple-hidden-layer networks  as the next theorem shows, see  \cite{LesLinPinSch93}.
\begin{theorem}\label{thm:mul_lay_deriv_app}
	A standard multi-layer feedforward network with a continuous activation function can uniformly approximate any continuous function to any degree of accuracy  if and only if its activation function is not a polynomial.
\end{theorem}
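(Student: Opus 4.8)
The plan is to establish the two implications separately, using the single-hidden-layer result of Theorem \ref{thm:function_app} as the workhorse. The nontrivial (``if'') direction asserts that, for a fixed number of hidden layers $M\ge 1$ and non-polynomial $\sigma$, the class of depth-$M$ networks is dense in $C(K)$ for every compact $K\subset\R^r$; the ``only if'' direction is the contrapositive statement that a polynomial $\sigma$ destroys density.

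For the necessity direction I would argue by contraposition. Suppose $\sigma$ is a polynomial of degree $d$. Each neuron in the first hidden layer outputs $\sigma(\cdot)$ of an affine function of $x$, hence a polynomial of degree at most $d$ in $x$; composing $M$ such layers, an induction on the layer index shows that every coordinate of the $M$-th hidden layer --- and therefore the (affine) network output --- is a polynomial in $x$ of degree at most $d^M$. For a fixed architecture this degree bound is independent of the widths as well as of the weights and biases, so the entire realizable class is contained in the space $\mathcal{P}_{d^M}$ of polynomials of degree at most $d^M$. On a compact set $K$ with nonempty interior, $\mathcal{P}_{d^M}$ is a finite-dimensional, hence closed, and proper subspace of $C(K)$; a proper closed subspace cannot be dense, so universal approximation fails.

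For the sufficiency direction I would induct on the number of hidden layers $M$. The base case $M=1$ is exactly Theorem \ref{thm:function_app}. For the inductive step, fix $f\in C(K)$ and $\epsilon>0$, and use Theorem \ref{thm:function_app} to pick a single-hidden-layer network $\mathcal{N}_1(x)=w_0\cdot\sigma(W_1 x+b_1)$ with $\max_{x\in K}\abs{f(x)-\mathcal{N}_1(x)}<\epsilon/2$. I would then take the first hidden layer of the deep network to compute $h:=\sigma(W_1 x+b_1)$, whose range $K':=\sigma(W_1 K+b_1)$ is compact as the image of a compact set under a continuous map. The readout $h\mapsto w_0\cdot h$ is a continuous (indeed linear) function on $K'$, so by the induction hypothesis it can be approximated within $\epsilon/2$, uniformly on $K'$, by a network $g$ with $M-1$ hidden layers. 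The composite map $x\mapsto g(\sigma(W_1 x+b_1))$ is then a depth-$M$ network, and since the approximation of $w_0\cdot h$ is uniform over the whole range $K'$ of the first layer, it approximates $\mathcal{N}_1$ within $\epsilon/2$ on $K$, hence $f$ within $\epsilon$.

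I expect the main obstacle to be the bookkeeping that makes the composition in the inductive step rigorous: one must verify that controlling $g$ uniformly on the compact image $K'$ of the first layer --- rather than only pointwise --- yields uniform control of the full composite network on $K$, and that the compactness of $K'$ legitimately lets the induction hypothesis apply. A secondary point requiring care is the precise reading of ``a standard multi-layer feedforward network'' in the necessity argument: the degree bound $d^M$ is finite only because the depth $M$ is fixed, so the non-density conclusion concerns each fixed architecture, with arbitrary width, rather than the union over all depths.
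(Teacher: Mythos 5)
Your proposal is correct, but note that the paper itself offers no proof of this theorem: it is stated as a known result with a citation to Leshno--Lin--Pinkus--Schocken \cite{LesLinPinSch93}, so the only comparison available is with that literature. There, the substance is the one-hidden-layer theorem (the paper's Theorem \ref{thm:function_app}), and the multilayer extension is treated as a remark, typically justified by letting the deeper layers approximate identity maps. Your induction on depth is a clean, fully rigorous way to make that extension explicit: the key step --- approximating the linear readout $h\mapsto w_0\cdot h$ uniformly on the \emph{compact image} $K'=\sigma(W_1K+b_1)$ by a depth-$(M-1)$ network, so that uniform control on $K'$ transfers to uniform control of the composite on $K$ --- is exactly the bookkeeping that is usually left implicit, and your formulation of the induction hypothesis (quantified over all input dimensions and all compact sets) is what makes it go through, since the dimension changes from $r$ to $l$ at the inductive step. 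Your necessity argument (polynomial activation of degree $d$ traps every depth-$M$ realization in the finite-dimensional, hence closed and proper, subspace $\mathcal{P}_{d^M}\subset C(K)$) is also sound, and your closing caveat is apt: the degree bound, and hence non-density, is a statement about each fixed depth with arbitrary width; for activations of degree at least two the union over all depths would in fact recover density via Stone--Weierstrass, so the theorem must indeed be read per fixed architecture depth.
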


One of the main tasks of deep learning, a specific branch of machine learning, is to identify suitable choices for $W_0\in\mathbb{R}^{s\times l_{\ell}}$, $W_1\in \mathbb{R}^{l_{1}\times r}$, $W_i\in\mathbb{R}^{l_{i}\times l_{i-1}}$ for $i=2,\ldots,\ell$, and  $b_0\in \mathbb{R}^{s}$, $b_i\in\mathbb{R}^{l_i}$, where $i=1,\ldots,\ell$ represents the $i$-th hidden layer of the underlying ANN, from a given data set $D=\{(x_j,f_j)\in\mathbb{R}^r\times\mathbb{R}^s:j=1,\ldots,n_D\}$, with $n_D\in\mathbb{N}$ sufficiently large. A typical approach in this context seeks to find a (global) solution to the nonconvex minimization problem
\begin{equation}\label{mh.ann.min}
	\text{minimize }\sum_{j=1}^{n_D}\mathfrak{d}(\mathcal{N}(x_j),f_j)+\mathfrak{r}(W,b)\quad\text{over }(W,b)\in\mathcal{F}_{ad},
\end{equation}
where $\mathcal{N}$ results from a multi-layer ANN that depends on $\Theta:=(W,b)$, with $W:=(W_{0}, \ldots, W_{\ell})$ and $b:=(b_{0}, b_{1}, \ldots, b_{\ell})$. Further, $\mathfrak{d}$ denotes a suitable distance measure, $\mathfrak{r}$ is an optional regularization term inducing some a priori properties of $\Theta$, and $\mathcal{F}_{ad}$ encodes possible additional constraints. While the study of \eqref{mh.ann.min} is an interesting and challenging subject in its own right, here we rather assume that the learning process, i.e., the computation of a suitable $\Theta$, has been completed. We then study analytical properties of the resulting $\mathcal{N}$, or the solution map $\Pi_\mathcal{N}$ or $Q_\mathcal{N}$ in view of \eqref{eq:optimal_control_reduced}, in the context of our target applications and report on associated numerical results.

\section{Application: Distributed control of semilinear elliptic PDEs }
\label{sec:appl_1}
In our first application we consider the following model problem associated with the distributed optimal control of a semilinear elliptic PDE:
\begin{align}\label{eq:cost}
	&\text{minimize}\quad J(y,u):= \frac{1}{2}\|y-g\|_{L^2(\Omega)}^{2} +\frac{\alpha}{2} \|u\|_{L^2(\Omega)}^{2},\quad\text{over }\;(y,u)\in  H^1(\Omega)\times L^2(\Omega)\\
	&\text{s.t. }\quad \label{eq:state_eq}
	-\Delta y + f(x,y)=u\;\;  \text{ in }\;\om,\quad \partial_{\nu}y=0\;\;  \text{on }\; \partial \om,\\
	&\label{eq:control_constr} \phantom{\text{s.t. }}\quad\; u\in \mathcal{C}_{ad}:=\{v\in L^2(\Omega):\underline{u}(x)\le v(x) \le \overline{u}(x),\quad \text{for a.e. }x\in\om\},
\end{align}
where $\underline{u},\overline{u}$ with $\underline{u}\leq \overline{u}$ belong to $L^\infty(\Omega)$, and 'a.e.' stands for 'almost every' in the sense of the Lebesgue measure. Moreover, we have $g\in L^{2}(\om)$, and $\om \subset \R^{d}$, $d\ge 2$, is a bounded domain with Lipschitz boundary. In view of our general model problem class \eqref{eq:optimal_control} we have $H=U=L^2(\Omega)$, $Y=H^1(\Omega)$, $Z=H^{-1}(\Omega)$, $A=\operatorname{id}$, and $e$ is given by the PDE in \eqref{eq:state_eq}. For more details on the involved Lebesgue and Sobolev spaces we refer to \cite{MR2424078}. Concerning $f$ we invoke the following assumption throughout this section:

\begin{assumption}\label{assu:non_monotone}
	The nonlinear function $f=f(x,z):\Omega \times \R \to \R$ is measurable with respect to $x$ for every $z\in \R$ and continuously differentiable with respect to $z$ for almost every $x\in\om$.
	There exists a function $F:\Omega \times \R \to \R$ so that $\partial_z F(\cdot,z)=f(\cdot,z)$. $F$ and $f$ satisfying the following  conditions, for all $z\in\R$
	\begin{align}\label{eq:growth_rate_Ff}
		\abs{f(\cdot,z)}\le  b_1+ c_{1}\abs{z}^{p-1}\quad \text{ and }\quad 
	   -f(\cdot,z)z+F(\cdot,z)\leq b_2,
	\end{align}
	which combined also result to
	 \begin{equation}\label{eq:growth_rate_Fminusf}
		F(\cdot,z)\leq b_0+c_{0}\abs{z}^{p},
	\end{equation}
	for some constants $b_{0}, b_{1}, b_{2}\in\R$ and $c_{0}, c_{1}>0$ and for $p$ with  $1<p\leq \frac{2d}{d-2}$ for $d\geq 3$,   $ 1<p<+\infty$ for $d=2$, or  $1< p \leq +\infty$ for $d=1$.The interpretation of $p=\infty$ for $d=1$ is that the growth conditions in \eqref{eq:growth_rate_Ff} are not required to hold.
	Finally, we assume that $F$ is coercive in the sense that  $\lim_{\norm{y}_{L^p(\Omega)}\to \infty} \frac{\int_{\Omega}F(x,y)dx}{\norm{y}_{L^p(\Omega)}} \to \infty$, and $F$ is bounded from below, i.e., $F(x,z)\geq F_0$  for some $F_0\in \R$, for all $z\in\R$ and for almost every $x\in\om$.
\end{assumption}
The above assumption particular indicates that both $f$ and $F$ satisfy the Carath\'eodory condition, and thus induce some operators of Nemytskii type. 

Moreover, observe also that the conditions on $p$ enable the embedding $H^1(\Omega)\subset L^{p}(\Omega)$.
Also note that the Assumption \ref{assu:non_monotone} is satisfied for $F(x,z)=\alpha(x) \pi_{p}(z)$ with $\alpha\in L^{\infty}(\om)$ and $\alpha(x)>0$ for almost every $x\in\om$ and $\pi_{p}$ being a polynomial of degree $p$ and positive coefficient on the term of degree $p$; the latter being  equal to $|z|^{p}$ if $p$ is odd such that the coercivity assumption is not violated.

Given the above assumption, the PDE \eqref{eq:state_eq} is related to the variational problem
\begin{equation}\label{eq:non_convex_variational}
	\text{minimize}\quad G(y):=\frac{1}{2} \|\nabla y\|_{L^{2}(\Omega)}^{2} +\int_\Omega F(x,y)\,dx-\int_{\Omega} uy\,dx\quad\text{ over }\;y\in H^{1}(\Omega).
\end{equation}
A particular example is given by a Ginzburg-Landau model for superconductivity where
$f(z) = \eta^{-1}(z^3-z)$ with a parameter $\eta>0$. It gives rise to the double-well type variational model 
\begin{equation}\label{eq:double_well_variational}
	\text{minimize}\quad \frac{1}{2} \|\nabla y\|_{L^{2}(\Omega)}^{2} +\frac{1}{4\eta}\int_{\Omega} (y^{2}-1)^{2}dx -\int_{\Omega} uy\,dx\quad\text{ over }\;y\in H^{1}(\Omega),
\end{equation}
for given $u\in L^2(\Omega)$ or in fact, to a more a general space. The next proposition shows  existence of solutions for  \eqref{eq:non_convex_variational}.

\begin{proposition}\label{prop:existence_variation}
	Let Assumption \ref{assu:non_monotone} hold, and suppose that $u\in L^r(\Omega)$ for some  $r \geq \frac{p}{p-1}$.
	Then the optimization problem \eqref{eq:non_convex_variational} admits a solution in $H^1(\Omega)$.
\end{proposition}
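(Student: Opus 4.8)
The plan is to apply the direct method of the calculus of variations. First I would check that $G$ is proper and bounded below: since $F(\cdot,z)\ge F_0$ by Assumption \ref{assu:non_monotone}, one has $\int_\om F(x,y)\,dx\ge F_0\abs{\om}>-\infty$, while the linear term is finite because $u\in L^{p'}(\om)$, where $p':=\frac{p}{p-1}$ (note $r\ge p'$ together with boundedness of $\om$ gives $L^r(\om)\subset L^{p'}(\om)$), so that $\abs{\int_\om uy\,dx}\le\norm{u}_{L^{p'}(\om)}\norm{y}_{L^p(\om)}$ by Hölder, the embedding $H^1(\om)\hookrightarrow L^p(\om)$ being guaranteed by the range of $p$. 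Evaluating at $y=0$ (using $\abs{F(\cdot,0)}\le b_0$) shows $G\not\equiv+\infty$. It then remains to establish two properties: coercivity of $G$ on $H^1(\om)$, and weak lower semicontinuity. A minimizer is obtained by extracting a weakly convergent subsequence from a bounded infimizing sequence and passing to the limit.

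For coercivity the key difficulty is that the Dirichlet energy $\tfrac12\norm{\nabla y}_{L^2(\om)}^2$ controls only the gradient, not the full $H^1$-norm, and that $F$ need not be convex (cf.\ the double-well example). My plan is to split $y=\bar y+\tilde y$ with $\bar y:=\abs{\om}^{-1}\int_\om y\,dx$ and to use the Poincar\'e--Wirtinger inequality $\norm{\tilde y}_{L^2(\om)}\le C_P\norm{\nabla y}_{L^2(\om)}$ together with the Sobolev--Poincar\'e bound $\norm{\tilde y}_{L^p(\om)}\le C\norm{\nabla y}_{L^2(\om)}$ to reduce matters to the constant part $\bar y$. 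The coercivity hypothesis on $F$ supplies, for $C:=\norm{u}_{L^{p'}(\om)}+1$, a constant $C'$ with $\int_\om F(x,y)\,dx\ge C\norm{y}_{L^p(\om)}-C'$ for all $y$ (the region $\{\norm{y}_{L^p(\om)}\le R\}$ being absorbed into $C'$ via $F\ge F_0$). Combining these yields a bound of the form $G(y)\ge\tfrac12\norm{\nabla y}_{L^2(\om)}^2+\norm{y}_{L^p(\om)}-C'$, and since $\norm{\nabla y}_{L^2(\om)}$ controls $\tilde y$ while $\norm{y}_{L^p(\om)}$ then controls the constant $\bar y$ (hence $\norm{y}_{L^2(\om)}$), the right-hand side dominates $\norm{y}_{H^1(\om)}$. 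Thus $G(y)\to+\infty$ as $\norm{y}_{H^1(\om)}\to\infty$. I expect this step---correctly threading the superlinear growth of $F$ through the mean/oscillation splitting---to be the main obstacle.

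For weak lower semicontinuity, let $y_n\rightharpoonup y$ in $H^1(\om)$. The convex continuous map $y\mapsto\tfrac12\norm{\nabla y}_{L^2(\om)}^2$ is weakly l.s.c., and the linear term passes to the limit since $y_n\rightharpoonup y$ in $L^p(\om)$ and $u\in L^{p'}(\om)$. For the nonconvex term $\int_\om F(x,y)\,dx$ I would avoid convexity and instead invoke the Rellich--Kondrachov theorem: for every $q<\frac{2d}{d-2}$ the embedding $H^1(\om)\hookrightarrow L^q(\om)$ is compact, so along a subsequence $y_n\to y$ strongly in $L^q(\om)$ and hence pointwise almost everywhere. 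As $F$ is Carath\'eodory (continuous in $z$), $F(x,y_n(x))\to F(x,y(x))$ a.e., and because $F(x,\cdot)\ge F_0$ the functions $F(\cdot,y_n)-F_0$ are nonnegative, so Fatou's lemma gives $\int_\om F(x,y)\,dx\le\liminf_n\int_\om F(x,y_n)\,dx$. This argument stays valid even at the critical exponent $p=\frac{2d}{d-2}$, where the embedding into $L^p(\om)$ itself is not compact, because the a.e.\ convergence is already furnished by any subcritical $q$.

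Combining the three ingredients: a bounded infimizing sequence $(y_n)$ admits a subsequence with $y_n\rightharpoonup\hat y$ in $H^1(\om)$, and weak lower semicontinuity yields $G(\hat y)\le\liminf_n G(y_n)=\inf_{y\in H^1(\om)}G(y)$, so $\hat y\in H^1(\om)$ is the sought minimizer, which proves the proposition.
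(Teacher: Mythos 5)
Your proof is correct and follows essentially the same route as the paper's: the direct method, with coercivity extracted from the assumption on $F$ via H\"older and a Poincar\'e-type inequality, and weak lower semicontinuity of the $F$-term obtained from compact embedding, a.e.\ convergence, and Fatou's lemma applied to $F(\cdot,y_n)-F_0\ge 0$. The only differences are cosmetic: you pair $\|u\|_{L^{p'}(\Omega)}$ with $\|y\|_{L^{p}(\Omega)}$ where the paper pairs $\|u\|_{L^{r}(\Omega)}$ with $\|y\|_{L^{r/(r-1)}(\Omega)}$, and you make the mean/oscillation splitting explicit where the paper simply invokes the Poincar\'e inequality.
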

\begin{proof}
	Notice that due to the coercivity assumption we can find a $C>0$ such that $\|u\|_{L^{r}(\om)}<CC_{1}$ with $C_{1}$ being the constant involved in the embedding $L^{p}(\om) \subset L^{\frac{r}{r-1}}(\om)$ such that
	\begin{equation}\label{F_coercive}
		\begin{aligned}
			\int_{\om} F(x,y)\, dx- \int_{\om} uy\, dx 
			&\ge C\|y\|_{L^{p}(\om)}-\|u\|_{L^{r}(\om)} \|y\|_{L^{\frac{r}{r-1}}(\om)}\\
			&\ge (CC_{1} -\|u\|_{L^{r}(\om)}) \|y\|_{L^{\frac{r}{r-1}}(\om)}\ge 0, 
		\end{aligned}
	\end{equation}
	provided $\|y\|_{L^{p}(\om)}$ is large enough. This together with the lower bound $F\ge F_{0}$ implies that the energy $G$ is bounded from below and thus there is an infimizing sequence $(y_{n})_{n\in\N}\in H^{1}(\om)\subset L^{p}(\om)$. Using the above inequality one easily deduces that $\|y_{n}\|_{L^{\frac{r}{r-1}}(\om)}$ is bounded, and with the help of the Poincar\'e inequality a uniform $H^{1}(\om)$ bound is also obtained for that sequence. 
	Therefore, we only need to show that $G(\cdot)$ is weakly lower semicontinuous in $H^{1}(\om)$.
	For this, it suffices to check the term involving $F$, since the arguments for the other two terms are straightforward.
	Assuming $y_{n}\rightharpoonup y$ in $H^{1}(\om)$,  by the compact embedding of $H^{1}(\Omega){\hookrightarrow} L^{1}(\Omega)$, we have that 
	$y_{n}\to y$ almost everywhere, {up to a subsequence}. Due to the continuity of $F$ with respect to the second variable, we have $F(\cdot,y)=\lim_{n\to \infty} F(\cdot,y_n) $ almost everywhere.
	Since $F(\cdot,y_n),F(\cdot,y)\geq F_0$, by Fatou's lemma we have
	\[\int_{\Omega}F(x,y)\,dx\le \liminf_{n\to\infty} \int_{\Omega} F(x,y_n)\,dx, \]
	and thus $G(\cdot)$ is weakly lower semicontinuous.
\end{proof}
Before we proceed, it is useful to recall the following standard result on linear elliptic PDEs  \cite{Eva10,Tro10}.
\begin{theorem}\label{thm:exi_linear}
	Let $ v\in L^r(\Omega)$, $a\in L^\infty(\Omega)$ with $a>0$. Then the following equation admits a unique solution
	\[ -\Delta s + a  s =v\quad \text{ in }\;\Omega, \qquad \partial_\nu s=0\;\text{ on }\;\partial \Omega.   \]
	Furthermore there exist constants $C_h>0$ and $C_l >0$ independent of $a$ and $v$ such that
	\begin{equation}\label{eq:linearPDE_energy}
		\norm{s}_{H^1(\Omega)} \leq C_h \norm{v}_{L^r(\Omega)}   \quad \text{ and }  \quad \norm{s}_{C(\overline{\Omega})} \leq C_l \norm{v}_{L^r(\Omega)}.
	\end{equation}
\end{theorem}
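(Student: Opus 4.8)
The plan is to proceed in three stages: first establish existence and uniqueness of a weak solution in $H^1(\Omega)$ by a variational argument, then derive an $L^\infty$-estimate (which yields the $C(\overline{\Omega})$-bound once interior and boundary continuity are invoked), and finally bootstrap back to the $H^1$-estimate. The structural feature I would exploit throughout is that the zeroth-order coefficient $a$ has the favorable sign: whenever its term appears after testing it is nonnegative and may simply be discarded, and this is precisely what makes the constants $C_h$ and $C_l$ independent of $a$.

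For existence and uniqueness I would pass to the weak formulation, namely find $s\in H^1(\Omega)$ with $\int_\Omega \nabla s\cdot\nabla\phi\,dx + \int_\Omega a s\phi\,dx = \int_\Omega v\phi\,dx$ for all $\phi\in H^1(\Omega)$, noting that the Neumann condition is natural and that $v\in L^r(\Omega)$ defines a bounded functional on $H^1(\Omega)$ through the embedding $H^1(\Omega)\hookrightarrow L^{r'}(\Omega)$. The associated bilinear form is bounded since $a\in L^\infty(\Omega)$; for coercivity one must handle the fact that constants lie in the kernel of the gradient under pure Neumann conditions. I would argue by contradiction and compactness: if coercivity failed there would be $s_k$ with $\norm{s_k}_{H^1(\Omega)}=1$, $\norm{\nabla s_k}_{L^2(\Omega)}\to 0$ and $\int_\Omega a s_k^2\,dx\to 0$; since $\norm{s_k}_{L^2(\Omega)}\to 1$, Rellich compactness forces $s_k\to c_0$ in $L^2(\Omega)$ for a nonzero constant $c_0$, whence $\int_\Omega a s_k^2\,dx\to c_0^2\int_\Omega a\,dx>0$ because $a>0$ a.e., a contradiction. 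Lax--Milgram, or equivalently minimization of the strictly convex coercive functional $\tfrac12\norm{\nabla s}_{L^2(\Omega)}^2+\tfrac12\int_\Omega a s^2\,dx-\int_\Omega vs\,dx$, then produces a unique $s$. The coercivity constant here does depend on $a$, but it serves only to secure solvability, not the final bounds.

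The \emph{main obstacle} is the $C(\overline{\Omega})$-estimate with an $a$-independent constant. For this I would run the Stampacchia truncation (De Giorgi--Nash--Moser) method: for $k>0$ test the equation with $(s-k)^+\in H^1(\Omega)$, observe that $\int_\Omega a s (s-k)^+\,dx\ge 0$ on the super-level set $\{s>k\}$ and discard it, so that the resulting Caccioppoli-type inequality for $-\Delta$ involves only $\Omega$, $d$, $r$ and $v$. Provided $r>d/2$, the standard iteration over the super-level sets closes and bounds the essential supremum of $s$ by $C\norm{v}_{L^r(\Omega)}$; applying the same argument to $-s$ (i.e.\ testing with $(s+k)^-$) controls the essential infimum, giving $\norm{s}_{L^\infty(\Omega)}\le C_l\norm{v}_{L^r(\Omega)}$ with $C_l=C_l(\Omega,d,r)$ independent of $a$. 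Interior Hölder continuity from De Giorgi--Nash--Moser, together with the corresponding boundary estimate for the Neumann problem on a Lipschitz domain, upgrades $s$ to $C(\overline{\Omega})$, so that $\norm{s}_{C(\overline{\Omega})}=\norm{s}_{L^\infty(\Omega)}$ and the second bound in \eqref{eq:linearPDE_energy} follows.

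Finally, the $H^1$-estimate comes cheaply from the $L^\infty$-estimate. Testing the weak form with $s$ gives $\norm{\nabla s}_{L^2(\Omega)}^2+\int_\Omega a s^2\,dx=\int_\Omega vs\,dx$; dropping the nonnegative term $\int_\Omega a s^2\,dx$ and bounding the right-hand side by $\norm{v}_{L^r(\Omega)}\norm{s}_{L^{r'}(\Omega)}\le \norm{v}_{L^r(\Omega)}\abs{\Omega}^{1/r'}\norm{s}_{L^\infty(\Omega)}$ yields $\norm{\nabla s}_{L^2(\Omega)}\le C\norm{v}_{L^r(\Omega)}$; combined with $\norm{s}_{L^2(\Omega)}\le \abs{\Omega}^{1/2}\norm{s}_{L^\infty(\Omega)}\le C\norm{v}_{L^r(\Omega)}$ this gives $\norm{s}_{H^1(\Omega)}\le C_h\norm{v}_{L^r(\Omega)}$ with $C_h$ again independent of $a$, precisely because the $a$-term was only discarded. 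The one delicate point in the whole argument is ensuring that the Stampacchia iteration and the boundary Hölder estimate hold uniformly in $a$, which the sign condition $a>0$ guarantees.
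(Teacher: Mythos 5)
Your existence--uniqueness step (Lax--Milgram with coercivity obtained by the compactness contradiction) is sound, but the $C(\overline{\Omega})$ step contains a genuine gap, and it is exactly where the whole difficulty of the statement sits. For the \emph{Dirichlet} problem the Stampacchia/De Giorgi iteration closes because $(s-k)^+\in H^1_0(\Omega)$, where the Sobolev inequality $\norm{(s-k)^+}_{L^{2^*}(\Omega)}\le C\norm{\nabla (s-k)^+}_{L^2(\Omega)}$ holds without lower-order terms. For the \emph{Neumann} problem, $(s-k)^+$ is merely in $H^1(\Omega)$, and on $H^1(\Omega)$ any Sobolev inequality must contain a lower-order term (constants lie in the kernel of the gradient). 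Consequently the Caccioppoli-type inequality you obtain after discarding the $a$-term, namely $\int_\Omega\abs{\nabla (s-k)^+}^2\,dx\le \int_\Omega v\,(s-k)^+\,dx$, does not by itself launch the level-set iteration: one needs an a priori bound on, say, $\norm{s}_{L^2(\Omega)}$ to start it, and for a pure Neumann problem the only source of such a bound is precisely the $a$-term you discarded, which re-introduces dependence on $a$.

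This is not a repairable technicality, because the $a$-uniform bound you are aiming at is false as literally stated. Take $a\equiv\epsilon$ and $v\equiv 1$: the unique solution is the constant $s\equiv 1/\epsilon$, so $\norm{s}_{C(\overline{\Omega})}=1/\epsilon\to\infty$ as $\epsilon\downarrow 0$ while $\norm{v}_{L^r(\Omega)}=\abs{\Omega}^{1/r}$ is fixed; both bounds in \eqref{eq:linearPDE_energy} are violated for any constants independent of $a$. (Integrating the equation over $\Omega$ shows the mechanism: $\int_\Omega a s\,dx=\int_\Omega v\,dx$, so the mean of $s$ is controlled only \emph{through} $a$.) Note that the paper itself offers no proof here --- the theorem is recalled as standard with a citation to \cite{Eva10,Tro10} --- and in those references the constants are independent of $v$ but \emph{do} depend on the coefficient $a$; uniformity can only be had over classes such as $\set{a:\, a\ge a_0>0 \text{ a.e.}}$ with constants depending on $a_0$ (this monotone dependence is what Proposition~\ref{lem:uniform_C_norm_bounds} actually needs when it applies the theorem with $a=\epsilon$ and then absorbs terms). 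Your final $H^1$ estimate is fine \emph{conditional} on the $L^\infty$ estimate, and you were right to flag $r>d/2$ (which the statement leaves implicit); the irreducible flaw is the claim that the Neumann level-set iteration needs no lower-order input.
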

Using the polynomial growth of $F$ together with the continuous embedding $H^1(\Omega)\subset L^{\frac{r}{r-1}}(\Omega)$, one verifies the Fr\'echet differentiability of $G:H^1(\Omega)\to\mathbb{R}$. The Euler-Lagrange equation associated with \eqref{eq:non_convex_variational} is given by
\begin{equation}\label{eq:non_monotone_y0}
	-\Delta y+f(x,y)=u \quad \text{ in }\;\Omega,\qquad \partial_\nu y=0\;\text{ on }\;\partial\Omega,
\end{equation}
and it is satisfied for every solution $y$ of \eqref{eq:non_convex_variational}.
Under Assumption \ref{assu:non_monotone},  the solutions of  \eqref{eq:non_monotone_y0} can be uniformly bounded with respect  to $\|\cdot\|_{C(\overline{\om})}$, as shown next.
\begin{proposition}\label{lem:uniform_C_norm_bounds}
	Let the Assumption \ref{assu:non_monotone} be satisfied, and let $\mathcal{C}_{ad}\subset L^{\infty}(\Omega)$ be bounded. Then there exists a constant $K>0$ such that for all solutions of \eqref{eq:non_monotone_y0}, it holds 
	\begin{equation}\label{eq:y0_C_estimate}
		\|y\|_{H^{1}(\om)}+\|y\|_{C(\overline{\Omega})}\le   K,  \quad \text{ for all } \;  u\in  \mathcal{C}_{ad}.
	\end{equation} 
\end{proposition}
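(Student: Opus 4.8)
The plan is to prove the two bounds in \eqref{eq:y0_C_estimate} in turn: first the $H^1(\om)$ bound by an energy estimate modeled on the existence proof of Proposition \ref{prop:existence_variation}, and then the $C(\overline{\om})$ bound by rewriting \eqref{eq:non_monotone_y0} as a \emph{linear} Neumann problem and invoking Theorem \ref{thm:exi_linear}. The decisive structural fact is that $\mathcal{C}_{ad}$ is bounded in $L^\infty(\om)$, so that $M:=\sup_{u\in\mathcal{C}_{ad}}\norm{u}_{L^\infty(\om)}<\infty$, and I will track throughout that every constant produced is independent of the particular $u\in\mathcal{C}_{ad}$.

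For the $H^1$ bound I would test the weak form of \eqref{eq:non_monotone_y0} with the solution $y$ itself, obtaining
\begin{equation*}
	\int_\om \abs{\nabla y}^2\,dx + \int_\om f(x,y)\,y\,dx = \int_\om u\,y\,dx .
\end{equation*}
The second condition in \eqref{eq:growth_rate_Ff} rearranges to $f(\cdot,z)z\ge F(\cdot,z)-b_2$, so after discarding the nonnegative gradient term and applying H\"older's inequality with $p'=p/(p-1)$ this yields $\int_\om F(x,y)\,dx \le M\,\abs{\om}^{1/p'}\norm{y}_{L^p(\om)} + b_2\abs{\om}$. The coercivity of $F$ then forces a uniform bound on $\norm{y}_{L^p(\om)}$: dividing by $\norm{y}_{L^p(\om)}$, the right-hand side stays bounded while the coercivity quotient would blow up, a contradiction for large $\norm{y}_{L^p(\om)}$. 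Substituting this back into the energy identity and using the lower bound $F\ge F_0$ bounds $\int_\om\abs{\nabla y}^2\,dx$; since control of $\norm{y}_{L^p(\om)}$ also controls the mean of $y$, the Poincar\'e inequality upgrades the gradient estimate to a uniform bound on $\norm{y}_{H^1(\om)}$, exactly as in Proposition \ref{prop:existence_variation}.

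For the $C(\overline{\om})$ bound I would read \eqref{eq:non_monotone_y0} as the linear equation $-\Delta y + y = v$ with $v:=u-f(x,y)+y$ and coefficient $a\equiv 1>0$, so that Theorem \ref{thm:exi_linear} applies and delivers $\norm{y}_{C(\overline{\om})}\le C_l\norm{v}_{L^r(\om)}$. The first growth condition $\abs{f(\cdot,z)}\le b_1+c_1\abs{z}^{p-1}$, together with the embedding $H^1(\om)\hookrightarrow L^{\frac{2d}{d-2}}(\om)$ and the $H^1$ bound already obtained, places $f(\cdot,y)$, and hence $v$, in some $L^r(\om)$ with norm bounded uniformly in $u$.

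The main obstacle is precisely this last step. The integrability exponent $r$ produced by the critical-growth estimate on $f(\cdot,y)$ need not exceed the threshold ($r>d/2$) that makes the $C(\overline{\om})$ estimate of Theorem \ref{thm:exi_linear} effective, the worst case being $p$ close to the critical exponent $\frac{2d}{d-2}$, where the first pass only places $v$ in a low-integrability space. I would resolve this by a finite bootstrap: each use of elliptic $L^r$-regularity raises the integrability of $y$, which through the growth condition raises that of $f(\cdot,y)$ and thus of $v$; for subcritical $p$ this iteration strictly gains integrability at every step, so after finitely many steps one reaches $v\in L^r(\om)$ with $r>d/2$, at which point Theorem \ref{thm:exi_linear} closes the argument. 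Verifying that this loop terminates uniformly and that all constants remain independent of $u\in\mathcal{C}_{ad}$ is where the real work lies.
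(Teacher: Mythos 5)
Your $H^{1}(\om)$ estimate is correct and is, in substance, the paper's own argument: testing \eqref{eq:non_monotone_y0} with $y$ and using the second condition in \eqref{eq:growth_rate_Ff} is exactly the paper's bound \eqref{eq:Lp_energy_bounds} ($G(y)\le b_{2}\abs{\om}$); coercivity of $F$ then yields the uniform $L^{p}$ bound, and $F\ge F_{0}$ together with Poincar\'e gives the uniform $H^{1}$ bound. For the sup-norm bound the paper also does what you first propose, but with no bootstrap at all: it rewrites the PDE as $-\Delta y+\epsilon y=u+\epsilon y-f(x,y)$, uses only $y\in L^{p}(\om)$ (not the Sobolev embedding) to place $f(\cdot,y)$ in $L^{\tilde r}(\om)$ with $\tilde r=\min\set{\tfrac{r}{r-1},\tfrac{p}{p-1}}$ uniformly in $u\in\mathcal{C}_{ad}$ (see \eqref{eq:y0_C_estimate_01}), and then applies both estimates of \eqref{eq:linearPDE_energy} at this exponent in a single pass, absorbing the $\epsilon$-term. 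In other words, the paper simply invokes the constants $C_h,C_l$ of Theorem \ref{thm:exi_linear} at the exponent $\tilde r$; the restriction you worry about (effectiveness of the $C(\overline\om)$ estimate only for exponents above $d/2$) is real but is not addressed by the paper either, which treats Theorem \ref{thm:exi_linear} as a black box valid for the $r$ it uses.

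The genuine gap is in your proposed repair. First, the bootstrap cannot be run with the tools available here: Theorem \ref{thm:exi_linear} provides only the two endpoint estimates ($H^{1}$ and $C(\overline\om)$), while each intermediate step of your iteration needs an $L^{s}$-to-$L^{q}$ regularity gain (e.g.\ $W^{2,s}$ regularity plus Sobolev embedding), which you would have to import and justify for the Neumann problem on a Lipschitz domain, uniformly in $u$. Second, and more seriously, the bootstrap provably stalls at the critical exponent $p=\tfrac{2d}{d-2}$, which Assumption \ref{assu:non_monotone} explicitly allows for $d\ge 3$: writing $\beta_{k}=1/q_{k}$ for the integrability of $y$ at step $k$, the growth condition and elliptic regularity give the recursion $\beta_{k+1}=(p-1)\beta_{k}-\tfrac{2}{d}$, whose fixed point $\tfrac{2}{d(p-2)}$ equals the starting value $\beta_{0}=\tfrac{d-2}{2d}$ precisely when $p=\tfrac{2d}{d-2}$. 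Concretely, $y\in L^{2d/(d-2)}$ gives $f(\cdot,y)\in L^{2d/(d+2)}$ and $W^{2,2d/(d+2)}\hookrightarrow L^{2d/(d-2)}$: no gain, and since $\tfrac{2d}{d+2}\le\tfrac{d}{2}$ for $d\ge2$, the threshold $r>d/2$ is never reached. You yourself restrict the termination claim to subcritical $p$ and defer the verification of termination and uniformity, so the proposal is incomplete exactly at the point where it departs from the paper's (admittedly glossed-over) one-step argument; as written it proves the proposition only for $d=2$, or for $d\ge3$ with $p$ strictly subcritical.
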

\begin{proof}
	From the fact that $y\in L^{p}(\Omega)$,  the growth condition \eqref{eq:growth_rate_Ff} and the  measurability of $f$, we have $f(\cdot,y)\in L^{\frac{p}{p-1}}(\Omega)$. 
	We can rewrite \eqref{eq:non_monotone_y0} in the following form
	\begin{equation}\label{eq:re_elliptic_y0}
		-\Delta y+\epsilon y=u +\epsilon y - f(x,y) \quad \text{ in }\;\Omega,\qquad \partial_\nu y=0\;\text{ on }\;\partial\Omega,
	\end{equation}
	for some $\epsilon>0$.
	Let us define $\tilde{r}:= \min\set{\frac{r}{r-1}, \frac{p}{p-1} }$. 
	Then $u+\epsilon y+ f(\cdot,y)\in  L^{\tilde{r}}(\Omega) $ since $u\in \mathcal{C}_{ad}\subset L^\infty(\Omega)$.
	Applying \eqref{eq:linearPDE_energy} to \eqref{eq:re_elliptic_y0} yields 
	\begin{equation}\label{eq:est_y0}
		\|y\|_{H^{1}(\Omega)}+ \|y\|_{C(\overline{\Omega})}
		\leq   (C_h+C_l) \left(\norm{u}_{L^{\tilde{r}}(\Omega)} +\epsilon\norm{y}_{L^{\tilde{r}}(\Omega)} +  \norm{f(\cdot, y)}_{L^{\tilde{r}}(\Omega)}\right) .
	\end{equation}
	As all solutions of \eqref{eq:re_elliptic_y0} are stationary points of $G$, in view of \eqref{eq:growth_rate_Ff}, every weak solution $y$ satisfies
	\begin{equation}\label{eq:Lp_energy_bounds}
		G(y)=\frac{1}{2} \|\nabla y\|_{L^{2}(\Omega)}^{2} +\int_\Omega F(x,y)\,dx-\int_{\Omega} uy = \int_\Omega -f(x,y)y+F(x,y)\,dx\leq b_{2} |\om|,
	\end{equation}
	where we use the weak formulation of \eqref{eq:re_elliptic_y0} tested with $y$.

	Using the coercivity of $G$, we can find some constant $M>0$ independent of $y$ such that $\norm{y}_{L^p(\Omega)}\leq M$. 
	Since $(p-1)\tilde{r}\leq p$, by \eqref{eq:growth_rate_Ff}, we have
	\begin{equation}\label{eq:y0_C_estimate_01}
		\norm{f(\cdot,y)}_{L^{\tilde{r}}(\Omega)}\leq  d_0+d\norm{y^{p-1}}_{L^{\tilde{r}}(\Omega)}\leq d_0+\tilde{d}\norm{y}^{p-1}_{L^p(\Omega)}\le \tilde{M}.
	\end{equation}

	Returning to \eqref{eq:est_y0}, we choose a sufficiently small $\epsilon>0$ such that the second term on the right-hand side of \eqref{eq:est_y0} is absorbed by $\norm{y}_{H^1(\Omega)}$. Since $L^{\tilde{r}}(\Omega)\subset L^\infty(\Omega)$ and $\mathcal{C}_{ad}$ is bounded, $\norm{u}_{L^{\tilde{r}}(\Omega)}$ is uniformly bounded for all $u\in \mathcal{C}_{ad}$. Finally, taking into account \eqref{eq:est_y0} and \eqref{eq:y0_C_estimate_01} we have
	\begin{equation}\label{eq:y0_C_estimate_02}
		\|y\|_{H^{1}(\Omega)} +\norm{y}_{C(\overline{\Omega})} \leq  (\tilde{C}_h+\tilde{C}_l) (\norm{u}_{L^{\tilde{r}}(\Omega)} + \tilde{M})\le  K,
	\end{equation}
	which is the conclusion.
\end{proof}
Notice that for monotone $f$, one can directly refer to standard results in the literature, e.g., \cite{Tro10}, where uniform bounds on the solution of \eqref{eq:non_monotone_y0} are shown for that case.

\subsection{Continuity and sensitivity of the control-to-state map}
Since $f(\cdot,\cdot)$ might be nonmonotone with respect to the second variable, this  may give rise to a lack of uniqueness of a solution to the semilinear PDE \eqref{eq:state_eq}.  In the monotone case, the continuity result is more direct to show, thus we focus on the nonmonotone case here.

Under our standing assumptions, \eqref{eq:state_eq} has a nonempty  set of solutions $y$ satisfying
\[\|y\|_{H^{1}(\Omega)} + \norm{y}_{C(\overline{\Omega})}\leq K\]
for some constant $K$ independent of $u$ since $\mathcal{C}_{ad}$ is bounded. The associated continuity result stated next, relies on a $\Gamma$--convergence technique. We note that for this section we take $r=2$.
\begin{proposition}\label{prop:double_well_Gamma_Conv}
	Let $u_{n}\to u$ in $L^{2}(\om)$ and $G_{n},G: H^{1}(\Omega)\to \R$ be the corresponding energies in \eqref{eq:non_convex_variational}. Then $G_{n}$ $\Gamma$--converges to $G$ with respect to the $H^{1}$ topology. Furthermore, $G_{n}$ is equi-coercive. 
\end{proposition}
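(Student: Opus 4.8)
The plan is to establish the two halves of the $\Gamma$-convergence definition---the liminf inequality and the existence of a recovery sequence---together with equi-coercivity, all with respect to the \emph{weak} $H^1(\om)$ topology; this is the natural choice here, since it is the coercivity of $F$ that supplies the compactness making equi-coercivity meaningful, whereas sublevel sets are never precompact in the strong $H^1$ topology. The key structural observation is that $G_n$ and $G$ differ only in their linear terms, $-\int_\om u_n y\,dx$ versus $-\int_\om u y\,dx$, so the entire argument reduces to controlling this perturbation while reusing the lower-semicontinuity analysis already carried out in Proposition \ref{prop:existence_variation}. For the recovery sequence I would simply take the constant sequence $y_n \equiv y$: since $u_n \to u$ in $L^2(\om)$ and $y \in H^1(\om) \subset L^2(\om)$, we have $\int_\om u_n y\,dx \to \int_\om u y\,dx$ while the gradient and $F$ terms are untouched, so $G_n(y) \to G(y)$ and in particular $\limsup_n G_n(y_n) \le G(y)$.

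For the liminf inequality, I take an arbitrary sequence $y_n \rightharpoonup y$ in $H^1(\om)$ and first dispose of the linear term: by the compact embedding $H^1(\om) \hookrightarrow L^2(\om)$ we get $y_n \to y$ strongly in $L^2$, and combining this with $u_n \to u$ in $L^2$ gives $\int_\om u_n y_n\,dx \to \int_\om u y\,dx$. Because this term converges, it can be split off, and it remains to show $\frac{1}{2}\norm{\nabla y}_{L^2(\om)}^2 + \int_\om F(x,y)\,dx \le \liminf_n \big(\frac{1}{2}\norm{\nabla y_n}_{L^2(\om)}^2 + \int_\om F(x,y_n)\,dx\big)$. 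This is precisely the weak lower semicontinuity proved inside Proposition \ref{prop:existence_variation}: weak lower semicontinuity of $\norm{\nabla\,\cdot\,}_{L^2(\om)}^2$ handles the gradient, and for the $F$ term one passes to a subsequence realizing the liminf, extracts a.e. convergence from the compact embedding $H^1(\om)\hookrightarrow L^1(\om)$, and invokes Fatou's lemma using the lower bound $F \ge F_0$ from Assumption \ref{assu:non_monotone}.

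For equi-coercivity I would show that the sublevel sets $\{y : G_n(y) \le c\}$ are bounded in $H^1(\om)$ uniformly in $n$. Since $u_n \to u$ in $L^2(\om)$, the quantity $M_u := \sup_n \norm{u_n}_{L^2(\om)}$ is finite, so from $G_n(y)\le c$ one has $\int_\om u_n y\,dx \le M_u \norm{y}_{L^2(\om)}$; feeding this uniform bound into the coercivity estimate for $F$ and the Poincar\'e inequality exactly as in \eqref{F_coercive} forces a uniform bound $\norm{y}_{H^1(\om)} \le R_c$ independent of $n$. As $H^1(\om)$ is reflexive, the closed ball of radius $R_c$ is weakly compact, which is equi-coercivity with respect to the weak $H^1$ topology.

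The main obstacle is the nonconvexity of $F$: the functional $y \mapsto \int_\om F(x,y)\,dx$ is not weakly lower semicontinuous by any convexity argument, so the liminf inequality cannot be read off directly. The device that rescues it is the compact embedding of $H^1(\om)$ into $L^1(\om)$---yielding a.e. convergence along a subsequence---combined with Fatou's lemma and the uniform lower bound on $F$, exactly the mechanism already exploited in Proposition \ref{prop:existence_variation}. A secondary technical point is the subsequence bookkeeping needed to combine the convergent linear term with the merely lower-semicontinuous remainder, and ensuring the uniform $L^2$ bound on $(u_n)$ is used consistently in both the liminf step and the equi-coercivity step.
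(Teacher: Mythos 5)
Your proof is correct and, at bottom, uses the same decomposition as the paper --- the $n$-independent part $\Phi(y)=\tfrac{1}{2}\|\nabla y\|_{L^{2}(\Omega)}^{2}+\int_\Omega F(x,y)\,dx$ plus the linear perturbation $-\int_\Omega u_n y\,dx$ --- but it runs at a more elementary level. The paper's proof consists essentially of two citations to \cite{dalmasogamma}: the linear terms converge continuously to $y\mapsto\int_\Omega uy\,dx$, the weakly lower semicontinuous $\Phi$ $\Gamma$-converges to itself, and $\Gamma$-convergence is stable under continuously convergent perturbations; equi-coercivity follows from the criterion that a common lower semicontinuous coercive minorant suffices, which the boundedness of $(\|u_n\|_{L^2(\Omega)})_{n\in\mathbb{N}}$ and the coercivity of $F$ provide. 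You instead verify the definition by hand: the liminf inequality along weakly convergent sequences (compact embedding to handle the linear term, weak lower semicontinuity of the gradient term, Fatou's lemma with $F\ge F_0$ for the $F$-term, plus the subsequence bookkeeping you mention), the constant recovery sequence, and equi-coercivity via a uniform $H^1$ bound on sublevel sets together with weak compactness of balls. Your insistence on the weak topology is not a deviation of substance: since your liminf inequality holds along all weakly convergent sequences and your recovery sequence is constant (hence strongly convergent), you in fact obtain $\Gamma$-convergence with respect to both the weak and the norm $H^1$ topologies, so the proposition as stated is covered; and your observation that equi-coercivity can only be meant weakly is accurate --- the paper's own coercive minorant likewise has weakly, not strongly, precompact sublevel sets. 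What each route buys: the paper's is shorter and modular, delegating the work to general $\Gamma$-convergence theory; yours is self-contained, makes explicit where the uniform bound on $(u_n)$, the coercivity of $F$, and the standing choice $r=2$ (hence $p\ge 2$) enter, and delivers the slightly stronger Mosco-type conclusion.
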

\begin{proof}
	{Observe first that one easily checks that $G_{n}$ $\Gamma$--converges to $G$. This is because the function $\frac{1}{2}\|\nabla (\cdot)\|_{L^{2}(\om)}^{2}+\int_\om F(x, \cdot)\,dx$ is weakly lower semicontinuous with respect to the $H^{1}(\om)$ convergence (and hence it $\Gamma$--converges to itself), while the function $y\mapsto\int_{\om}u_{n}y\,dx$ continuously converges to the function $y\mapsto \int_{\om}u y\,dx$ (see \cite[Def. 4.7]{dalmasogamma} for the notion of continuous convergence). The assertion follows from the stability of $\Gamma$-convergence under continuous perturbations \cite[Prop. 6.20]{dalmasogamma}.}
	
	In order to see that $G_{n}$ is equi-coercive, it suffices to find a lower semicontinuous coercive function $\Psi: H^{1}(\om) \to \R$ such that $G_{n}\ge \Psi$ on $H^{1}(\om)$, cf. \cite[Prop. 7.7]{dalmasogamma}. This follows from the fact that $(\|u_{n}\|_{L^{2}(\om)})_{n\in\mathbb{N}}$ is a bounded sequence and from the coercivity condition in Assumption \eqref{assu:non_monotone}, see also \eqref{F_coercive}.
\end{proof}

With the help of $\Gamma$--convergence and equi-coercivity one can get the classical results on $\Gamma$--convergence with respect to global and local minimizers. It is of particular interest whether $y_{0}$ is an isolated local minimizer of $G$ (and in particular satisfies \eqref{eq:state_eq}). In this case there exists a sequence $\tilde{y}_{n}$ with $\tilde{y}_{n}\to y_{0}$ in $H^{1}(\om)$ such that for all sufficiently large $n$, $\tilde{y}_{n}$ is a local minimizer of $G_{n}$ (hence it also satisfies  \eqref{eq:state_eq}); see \cite{braides2014convergence}. 
{This implies that if $u_{n}\to u_{0}$ in $L^{2}(\om)$ and $y_{0}\in \Pi (u_{0})$ is an isolated local minimizer of $G$, then there exists a sequence $(y_{n})_{n\in\N}$ in $H^{1}(\om)$ such that $y_{n}\in \Pi (u_{n})$ and $y_{n}\to y_{0}$ in $H^{1}(\om)$.}

\begin{remark}\label{rem:local_minimizer}
	We note that solutions of the PDE \eqref{eq:state_eq} are not necessarily local minimizers of the variational problem \eqref{eq:non_convex_variational}. In order to make sure that $y_0$ is an isolated local minimizer, one can check second-order conditions on \eqref{eq:non_convex_variational}. In this context, second-order sufficiency relates to 
	$(s,-\Delta s + \partial_y f(\cdot,y_0)s)>\epsilon \norm{s}^2_{H^1(\Omega)}$ for all $s\in H^1(\Omega)$ with some $\epsilon>0$.
	Therefore, if $f(\cdot,\cdot)$ is a strictly monotone function with respect to its second variable, then the positive definiteness condition is automatically guaranteed.
	For the more general case, it turns out that a similar, but yet milder condition (see \eqref{eq:smallness} below) helps to establish the sensitivity result for the control-to-state map.
\end{remark}

Given this approximating sequence $(y_{n})_{n\in\N}$ for $y_{0}\in \Pi(u_{0})$, convergence rates and differentiability of the control-to-state map in a certain sense are shown next. For this, we also assume that 
\begin{equation}\label{lip_partialy_f}
\text{$\forall \,M>0\:$ $\exists\, L_{M}>0\,$: }	\text{$|\partial_{y}f(x,y_{1})-\partial_{y}f(x,y_{2})|\le L_{M}|y_{1}-y_{2}|$,  }
\end{equation}
for almost every $x\in\om$ and for all $y_{1}, y_{2}\in [-M,M]$.
This also implies
\begin{equation}\label{partialz_f_bounded}
	\text{$\forall \,M>0\:$ $\exists\, C>0\,$: }\quad |\partial_{y}f(x,y)|<C \text{ for a.e. $x\in\om$ and  $\forall \,y\in[-M,M]$.}
\end{equation}

\begin{theorem}\label{thm:continuity_multi_map}
	Assume that \eqref{lip_partialy_f} holds for $f$, let $\Pi: L^{2}(\Omega) \rightrightarrows H^{1}(\Omega)$ be the possibly multi-valued control-to-state map of \eqref{eq:state_eq} and fix some $u_{0}, h\in L^{2}(\Omega)$ as well as $y_{0}\in \Pi(u_{0})$. 
	Define $(\partial_y  f(\cdot,y_0))^-:=\min \set{\partial_y  f(\cdot,y_0),0}$, and assume that
	\begin{equation}\label{eq:smallness}
		\norm{(\partial_y  f(\cdot,y_0))^-}_{L^2(\Omega)}< \frac{1}{C_l} \quad \text{  and }  \quad 	\norm{(\partial_y  f(\cdot,y_0))^-}_{L^\infty(\Omega)}< \frac{1}{C_h},
	\end{equation}
	where $C_l$ and $C_h$ are the positive constants defined in \eqref{eq:linearPDE_energy}.
	Suppose  $u_{n}=u_{0}+t_{n}h$  for a sequence $t_{n}\to 0$, and suppose there exists $y_{{n}}\in \Pi(u_{{n}})$ with $y_{{n}}\to y_0$ in $H^{1}(\om)$. Then we have
	\begin{equation}\label{multi_valued_cnt}
		\|y_{{n}}-y_{0}\|_{H^{1}(\om)}\le C t_{n},
	\end{equation}
	for some constant $C$ and large enough $n\in\mathbb{N}$. Moreover, one has that every weak cluster point of $\frac{y_{{n}}-y_{0}}{t_{n}} $, denoted by $p$,
	solves the following linear PDE
	\[
	-\Delta p +\partial_y  f(\cdot,y_0)p=h \quad  \text{ in } \;\Omega,\qquad
	\partial_\nu p=0  \;  \text{ on }\; \partial \Omega.
	\]
	In particular, for every $h\in L^2(\Omega)$, $p$ satisfies the energy bounds:
	\begin{equation}\label{eq:adjoint_bounds}
		\norm{p}_{H^1(\Omega)}\leq C_H\norm{h}_{L^2(\Omega)} \quad   \text{ and }   \quad	\norm{p}_{C(\overline{\Omega})}\leq C_c\norm{h}_{L^2(\Omega)} ,
	\end{equation}
	with constants $C_H$ and $C_c$ depending on $C_h$ and $C_l$.
\end{theorem}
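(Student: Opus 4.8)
The plan is to linearize the two nonlinear equations satisfied by $y_n$ and $y_0$ around $y_0$ and to reduce everything to the linear a priori estimates of Theorem~\ref{thm:exi_linear}, using the smallness conditions \eqref{eq:smallness} to absorb the (possibly negative) zeroth-order coefficient. Both $y_n\in\Pi(u_n)$ and $y_0\in\Pi(u_0)$ solve \eqref{eq:state_eq}, so subtracting and applying the fundamental theorem of calculus to the $C^1$ nonlinearity $f(x,\cdot)$, I would write
\[
-\Delta(y_n-y_0)+a_n(x)(y_n-y_0)=t_n h\quad\text{in }\Omega,\qquad \partial_\nu(y_n-y_0)=0\ \text{on }\partial\Omega,
\]
with $a_n(x):=\int_0^1 \partial_y f\big(x,y_0+s(y_n-y_0)\big)\,ds$. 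By Proposition~\ref{lem:uniform_C_norm_bounds} both $y_n$ and $y_0$, hence all convex combinations $y_0+s(y_n-y_0)$, remain in a fixed interval $[-K,K]$, so \eqref{partialz_f_bounded} gives $a_n\in L^\infty(\Omega)$ and \eqref{lip_partialy_f} yields the pointwise bound $|a_n(x)-\partial_y f(x,y_0)|\le \tfrac{L_K}{2}|y_n(x)-y_0(x)|$.

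Next I would transfer the smallness conditions \eqref{eq:smallness} from $b:=\partial_y f(\cdot,y_0)$ to $a_n$. Since $t\mapsto t^-$ is $1$-Lipschitz, the preceding pointwise bound gives $|a_n^- - b^-|\le \tfrac{L_K}{2}|y_n-y_0|$, whence $\|a_n^- - b^-\|_{L^2}\le \tfrac{L_K}{2}\|y_n-y_0\|_{L^2}\to 0$ because $y_n\to y_0$ in $H^1\hookrightarrow L^2$; thus $\|a_n^-\|_{L^2}<1/C_l$ for large $n$. Splitting $a_n=a_n^+ + a_n^-$ and treating $-a_n^-(y_n-y_0)$ as part of the right-hand side, I would apply the $C(\overline\Omega)$ estimate in \eqref{eq:linearPDE_energy} (with a regularizing shift $+\epsilon$ making the coefficient $a_n^+ +\epsilon$ strictly positive) to obtain $\|y_n-y_0\|_{C(\overline\Omega)}\le C_l\big(t_n\|h\|_{L^2}+\|a_n^-\|_{L^2}\|y_n-y_0\|_{C(\overline\Omega)}+\epsilon\|y_n-y_0\|_{L^2}\big)$; the smallness $C_l\|a_n^-\|_{L^2}<1$ lets me absorb the critical term and, letting $\epsilon\downarrow 0$, conclude $\|y_n-y_0\|_{C(\overline\Omega)}\le Ct_n\to 0$. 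With $C(\overline\Omega)$-convergence established, $\|a_n^- - b^-\|_{L^\infty}\le\tfrac{L_K}{2}\|y_n-y_0\|_{C(\overline\Omega)}\to 0$, so $\|a_n^-\|_{L^\infty}<1/C_h$ for large $n$. Repeating the absorption with the $H^1$ estimate in \eqref{eq:linearPDE_energy}, now bounding $\|a_n^-(y_n-y_0)\|_{L^2}\le\|a_n^-\|_{L^\infty}\|y_n-y_0\|_{H^1}$ and absorbing via $C_h\|a_n^-\|_{L^\infty}<1$, yields \eqref{multi_valued_cnt}.

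For the limiting equation I would set $w_n:=(y_n-y_0)/t_n$, which is bounded in $H^1(\Omega)$ by \eqref{multi_valued_cnt}, so any weak cluster point $p$ satisfies $w_n\rightharpoonup p$ in $H^1$ along a subsequence and, by the compact embedding $H^1\hookrightarrow L^2$, $w_n\to p$ strongly in $L^2$. Dividing the difference equation by $t_n$ gives $-\Delta w_n+a_n w_n=h$ in the weak sense; since $\|a_n-b\|_{L^\infty}\le\tfrac{L_K}{2}\|y_n-y_0\|_{C(\overline\Omega)}\to0$ and $w_n\to p$ in $L^2$, I can pass to the limit term by term (the gradient term by weak $L^2$-convergence, the zeroth-order term because $a_n w_n\to bp$ in $L^2$) to identify $p$ as the weak solution of $-\Delta p+\partial_y f(\cdot,y_0)p=h$ with homogeneous Neumann conditions. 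Finally, the energy bounds \eqref{eq:adjoint_bounds} follow by applying the same absorption to this limit equation, using the assumed smallness \eqref{eq:smallness} directly on $b^-$: the $L^2$-smallness together with the $C(\overline\Omega)$-estimate produces $C_c$, and the $L^\infty$-smallness together with the $H^1$-estimate produces $C_H$.

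I expect the main obstacle to be the bootstrapping of the coefficient convergence: $H^1$-convergence of $y_n$ is too weak to control $\|a_n^-\|_{L^\infty}$ directly, so the order of the argument is essential — one must first exploit only the $L^2$-smallness, which is exactly what the $C(\overline\Omega)$-estimate in \eqref{eq:linearPDE_energy} requires, to upgrade to $C(\overline\Omega)$-convergence, and only then invoke the $L^\infty$-smallness for the $H^1$ bound. Keeping the regularizing shift $\epsilon$ harmless while retaining the strict positivity needed in Theorem~\ref{thm:exi_linear}, and verifying that its constants $C_h,C_l$ are genuinely independent of the shifted coefficient so that the $\epsilon\downarrow0$ limit is legitimate, is the remaining technical point.
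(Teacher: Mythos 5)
Your proposal is correct and follows essentially the same route as the paper's proof: linearize the difference of the two state equations, split the zeroth-order coefficient into its positive and negative parts, shift by a small $\epsilon>0$, and absorb the negative part using the a priori estimates of Theorem \ref{thm:exi_linear} together with the smallness conditions \eqref{eq:smallness}. The only differences are executional: you use the integral form of the mean value theorem rather than the pointwise one (which neatly sidesteps the measurability issue of Remark \ref{rem:mean_value_thm}), you obtain the rates by a two-step bootstrap (sup-norm rate first via the $L^2$-smallness, then the $H^1$ rate \eqref{multi_valued_cnt} via the $L^\infty$-smallness) where the paper gets the $H^1$ rate in a single combined estimate using only the first condition in \eqref{eq:smallness}, and you read the bounds \eqref{eq:adjoint_bounds} directly off the limit equation rather than passing uniform bounds on the difference quotients to the limit.
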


\begin{proof}
	Subtracting  the  equations that correspond to the pairs  $(u_{n},y_{n})$ and $(u_{0}, y_{0})$ and using the mean value theorem,  we get
	\begin{equation}\label{eq:difference}
		-\Delta (y_{{n}}-y_{0}) =t_{n}h +  f(\cdot,y_{0})- f(\cdot,y_{{n}})= t_n h - \partial_y  f(\cdot,y_0+\gamma_h(y_{{n}}-y_{0})) (y_{{n}}-y_{0}),
	\end{equation}
	where $\gamma_h\in L^\infty(\Omega)$ with $\norm{\gamma_h}_{L^\infty(\Omega)}\leq 1$, see Remark \ref{rem:mean_value_thm} regarding measurability of such $\gamma_{h}$. 
	Note that $y_{n},y_0\in C(\overline{\Omega})$ with a uniform bound $K>0$, therefore from \eqref{partialz_f_bounded} we have  $\partial_y  f(\cdot,y_0+\gamma_h(y_{{n}}-y_{0}))\in L^\infty(\Omega)$.
	Then, given $\epsilon>0$,  we rewrite \eqref{eq:difference} as 
	\begin{equation}\label{eq:difference1}
		-\Delta (y_{{n}}-y_{0})+(\epsilon+(\partial_y  f(\cdot,\xi_n^h))^+)(y_{{n}}-y_{0})=t_{n}h + (\epsilon+(\partial_y  f(\cdot,\xi_n^h))^+ -\partial_y  f(\cdot,\xi_n^h) )(y_{{n}}-y_{0}),
	\end{equation}
	where $\xi_n^h:=y_0+\gamma_h(y_{{n}}-y_{0})$, and $(\partial_y  f(\cdot,\xi_n^h))^+=\max\set{ \partial_y  f(\cdot,\xi_n^h),0}$.  
	Now, using \eqref{eq:linearPDE_energy}, we have
	\begin{equation}\label{eq:pde_residual}
		\begin{aligned}
			&\frac{ \epsilon}{C_h} \norm{y_{n}-y_0}_{H^1(\Omega)}+\norm{y_{n}-y_0}_{L^\infty(\Omega)} \\ \leq& (\epsilon+C_l)\left(t_{n}\norm{h}_{L^2(\Omega)} +\norm{ (\epsilon+(\partial_y  f(\cdot,\xi_n^h))^+ -\partial_y  f(\cdot,\xi_n^h))(y_{{n}}-y_{0})}_{L^2(\Omega)}\right)\\
			\leq &(\epsilon+C_l) \left(t_{n}\norm{h}_{L^2(\Omega)} + \norm{\epsilon +(\partial_y  f(\cdot,\xi_n^h))^-}_{L^2(\Omega)} \norm{y_{{n}}-y_{0}}_{L^\infty(\Omega)}\right).
		\end{aligned}
	\end{equation}
	The last inequality holds since both $y_n$ and $y_0$  are $C(\overline{\Omega})$ functions.
	Because  $y_n \to y_0$ in $H^1(\Omega)$, we also have that $\xi_n^h\to y_0$ in $L^2(\Omega)$.
	From the continuity of $\partial_y  f(x,\cdot)$, the fact that $y_{n}$, $y_{0}$ are uniformly bounded in $C(\overline{\om})$ and from  dominated convergence,  we have that $\partial_y  f(\cdot,\xi_n^h)\to \partial_y  f(\cdot,y_0)$ in $L^2(\Omega)$.
	Thus, because of \eqref{eq:smallness}, there exists $\epsilon=\epsilon_0$ small enough, such that for sufficiently large $n$, we have $(\epsilon_0+C_l) \norm{\epsilon_0 +(\partial_y  f(\cdot,\xi_n^h))^-}_{L^2(\Omega)}\leq 1$.
	Then \eqref{eq:pde_residual} leads to
	\begin{equation}\label{eq:diff_bounds}
		\norm{y_{n}-y_0}_{H^1(\Omega)} \leq  \frac{C_h(\epsilon_0+C_l)}{\epsilon_0} \norm{h}_{L^2(\Omega)}t_{n}.
	\end{equation}

	From the above inequalities we have that $(\frac{y_{{n}}-y}{t_{n}})_{n\in\mathbb{N}}$ is uniformly bounded in $H^{1}(\om)$ and therefore admits a weakly convergent subsequence (unrelabelled) with weak limit $p$.
	Then, dividing by $t_{n}$ and letting $t_{n}\to 0$ in \eqref{eq:difference}, we have that $p$ satisfies the following equation
	\begin{equation}\label{eq:direc_derivative}
		-\Delta p +\partial_y  f(\cdot,y_{0})p=h \quad  \text{in } \Omega,\quad 
		\partial_\nu  p=0 \;  \text{ on }\; \partial \Omega.
	\end{equation}
	Note that \eqref{eq:diff_bounds} readily implies the first energy bound in \eqref{eq:adjoint_bounds}.
	For the second bound in \eqref{eq:adjoint_bounds}, the procedure is similar. For this we consider 
	\begin{equation}\label{eq:pde_residual2}
		\begin{aligned}
			& \norm{y_{n}-y_0}_{H^1(\Omega)}+\frac{ \epsilon}{C_l}\norm{y_{n}-y_0}_{C(\overline{\Omega})} \\ 
			\leq &(\epsilon+C_h) \left(t_{n}\norm{h}_{L^2(\Omega)} + \norm{\epsilon +(\partial_y  f(\cdot,\xi_n^h))^-}_{L^\infty(\Omega)} \norm{y_{{n}}-y_{0}}_{L^2(\Omega)}\right).
		\end{aligned}
	\end{equation}
	Invoking now the second condition in \eqref{eq:smallness}, and using exactly the same steps as for the first bound of \eqref{eq:adjoint_bounds}, we find some $\epsilon'_0>0$ to conclude the second bound in \eqref{eq:adjoint_bounds} when $n$ is sufficiently large.
\end{proof}
\begin{remark}\label{rem:sensitivity}
	The proof of Theorem \ref{thm:continuity_multi_map} provides an alternative strategy for proving existence and energy estimates of solutions for certain type of linear elliptic PDEs, e.g. as in \eqref{eq:direc_derivative} when the elliptic coercivity is mildly violated. Also note that in the monotone case,  $(\partial_y  f(\cdot,y_0))^-\equiv 0$, and thus the conditions in \eqref{eq:smallness} are always fulfilled.
\end{remark}

\subsection{Existence results for learning-informed semilinear PDEs}
As motivated in the introduction, in many applications the precise form of $f$ is not known explicitly, but rather it can be inferred from given data only. 
Here we are particularly interested in neural networks to learn the hidden physical law or nonlinear mapping from such  data.
The corresponding existence result for PDEs that include such neural network approximations is stated next. 
\begin{proposition}\label{prop:first_aprox}
	Let $f:\Omega \times \R \to \R$  and $F:\Omega\times \R \to \R $ be given as in Assumption \ref{assu:non_monotone} with the extra assumption that $f\in C(\overline{\om}\times \R)$.
	Then, for every $\epsilon>0$ there exists a neural network $\mathcal{N}\in C^{\infty}(\R^{d}\times \R)$ such that
	\begin{equation}\label{g_N}
		\sup_{\| y\|_{L^{\infty}(\Omega)}< K} \|f(\cdot,y)-\mathcal{N}(\cdot,y)\|_{U}<\epsilon,
	\end{equation}
	with $K$ cf. \eqref{eq:y0_C_estimate}. Moreover, the learning-informed PDE
	\begin{equation}\label{eq:nonconvex_learn}
		\begin{aligned}
			-\Delta y + \mathcal{N}(\cdot,y)&=u\quad  \text{ in }\; \Omega,\qquad
			\partial_\nu y=0\;  \text{ on }\; \partial \Omega,
		\end{aligned}
	\end{equation}
	admits a weak solution which also satisfies \eqref{eq:y0_C_estimate} for sufficiently small $\epsilon>0$.
\end{proposition}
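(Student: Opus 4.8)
The plan is to prove the two claims in sequence: first the uniform approximation of $f$ by a smooth neural network $\mathcal{N}$ on the relevant compact set, and then existence of a weak solution to the learning-informed PDE \eqref{eq:nonconvex_learn} that inherits the a priori bound \eqref{eq:y0_C_estimate}. For the approximation claim, since $f\in C(\overline{\Omega}\times\R)$ and the supremum in \eqref{g_N} is taken over the compact set $\overline{\Omega}\times[-K,K]\subset\R^{d}\times\R$, I would invoke Theorem \ref{thm:function_app}: because any smooth non-polynomial activation (e.g.\ a sigmoid) generates a dense family $R_\sigma$ in $C(\overline{\Omega}\times[-K,K])$ in the topology of uniform convergence on compacta, there is a network $\mathcal{N}\in C^{\infty}(\R^{d}\times\R)$ with $\max_{\overline{\Omega}\times[-K,K]}|f-\mathcal{N}|<\epsilon/\sqrt{|\Omega|}$, say. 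Since $U=L^{2}(\Omega)$, converting the uniform bound into the $L^{2}$ bound of \eqref{g_N} is just multiplication by $\sqrt{|\Omega|}$, which gives \eqref{g_N}.

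For the existence part, the cleanest route is to mirror the structure already developed for the original equation \eqref{eq:non_monotone_y0}: recast \eqref{eq:nonconvex_learn} as the Euler--Lagrange equation of a variational problem and apply the direct method. Concretely, I would introduce the primitive $F_{\mathcal{N}}(x,z):=\int_{0}^{z}\mathcal{N}(x,\tau)\,d\tau$ and the energy $G_{\mathcal{N}}(y):=\tfrac12\|\nabla y\|_{L^{2}(\Omega)}^{2}+\int_{\Omega}F_{\mathcal{N}}(x,y)\,dx-\int_{\Omega}uy\,dx$ over $H^{1}(\Omega)$, whose critical points solve \eqref{eq:nonconvex_learn}. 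The key observation is that for $\epsilon$ small enough the growth and coercivity structure of $F$ carries over to $F_{\mathcal{N}}$: integrating the uniform closeness $|f-\mathcal{N}|<\epsilon$ on $[-K,K]$ gives $|F-F_{\mathcal{N}}|\le \epsilon K$ there, so the coercivity and lower-bound conditions of Assumption \ref{assu:non_monotone} are preserved up to an $O(\epsilon)$ perturbation. This lets me run the direct method exactly as in Proposition \ref{prop:existence_variation} (boundedness of an infimizing sequence via coercivity and Poincar\'e, weak lower semicontinuity via Fatou on the $F_{\mathcal{N}}$ term) to produce a weak solution $y$.

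To obtain the a priori bound \eqref{eq:y0_C_estimate} for this solution, I would repeat the argument of Proposition \ref{lem:uniform_C_norm_bounds}: rewrite \eqref{eq:nonconvex_learn} as $-\Delta y+\epsilon_{0} y=u+\epsilon_{0} y-\mathcal{N}(\cdot,y)$, apply the linear elliptic estimate \eqref{eq:linearPDE_energy} of Theorem \ref{thm:exi_linear}, and bound $\|\mathcal{N}(\cdot,y)\|_{L^{\tilde r}}$ using $\|\mathcal{N}(\cdot,y)\|\le \|f(\cdot,y)\|+\epsilon$ together with the growth control on $f$ from \eqref{eq:growth_rate_Ff}; the coercivity of $G_{\mathcal{N}}$ again furnishes the needed $L^{p}$ bound on $y$. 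The one genuinely delicate point is the logical order: the bound \eqref{g_N} is only asserted for $\|y\|_{L^\infty}<K$, yet I want to conclude $\|y\|_{C(\overline{\Omega})}\le K$ for the solution. I would resolve this by a continuation/bootstrapping argument, using that for sufficiently small $\epsilon$ the constants appearing in the $C(\overline{\Omega})$ estimate are perturbed only by $O(\epsilon)$ from those for $f$, so the bound $K$ derived for $f$ remains valid (strictly, with a constant slightly larger than $K$, and one either enlarges $K$ or notes the estimate closes for $\epsilon$ small). Making this self-consistency argument airtight, rather than the routine elliptic estimates, is where I expect the main effort to lie.
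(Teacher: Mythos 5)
Your first step (uniform approximation on a compact set via Theorem \ref{thm:function_app}, then passing to the $U$-norm) and your closing discussion of the a priori bound are fine and mirror the paper. The genuine gap is in the existence argument, specifically in the sentence claiming that ``the coercivity and lower-bound conditions of Assumption \ref{assu:non_monotone} are preserved up to an $O(\epsilon)$ perturbation'' for $F_{\mathcal{N}}(x,z):=\int_{0}^{z}\mathcal{N}(x,\tau)\,d\tau$. The closeness $|f-\mathcal{N}|<\epsilon$ holds \emph{only} on the compact set $\overline{\Omega}\times[-K,K]$; outside it, $\mathcal{N}$ bears no relation to $f$ whatsoever. For the networks actually used here (bounded sigmoid-type activations), $\mathcal{N}(x,\cdot)$ is globally bounded, so $F_{\mathcal{N}}(x,z)$ grows at most linearly in $|z|$ and can even tend to $-\infty$ as $|z|\to\infty$. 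Consequently $G_{\mathcal{N}}$ need not be bounded below, its infimizing sequences need not be bounded in $L^{p}(\Omega)$, and the direct method ``exactly as in Proposition \ref{prop:existence_variation}'' breaks down: coercivity is a statement about the regime $\|y\|_{L^{p}(\Omega)}\to\infty$, which is governed precisely by the values of $z$ where you have no control over $\mathcal{N}$. Your continuation/bootstrapping remark at the end addresses a different (and secondary) issue, namely whether a solution, once it exists, stays in the ball of radius $K$; it does not rescue the well-posedness of the minimization problem itself.

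The paper's proof closes exactly this hole by a gluing construction: it approximates $f$ on the slightly larger set $\overline{\Omega}\times[-K-1,K+1]$ and defines a hybrid potential $N(x,t)$ equal to $\int_{0}^{t}\mathcal{N}(x,s)\,ds+F(x,0)$ for $|t|\le K+1$, but equal to $F(x,t)$ plus an $x$-dependent constant for $|t|>K+1$ (with a smoothing near the junctions). This $N$ satisfies $|N-F|<\epsilon(K+1)$ globally and, crucially, inherits the exact growth, coercivity, and lower bound of $F$ outside the compact window, so the direct method applies to the functional built from $N$. Its Euler--Lagrange equation involves $\partial_{y}N$, and the a priori estimate (your Proposition \ref{lem:uniform_C_norm_bounds} argument, which you correctly sketch) shows any solution satisfies $\|y_{0}\|_{C(\overline{\Omega})}<K$ for small $\epsilon$; since $\partial_{y}N=\mathcal{N}$ on $\Omega\times[-K,K]$, this solution solves \eqref{eq:nonconvex_learn}. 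To repair your write-up, replace your global primitive $F_{\mathcal{N}}$ by such a glued potential (or otherwise modify $\mathcal{N}$ outside $[-K-1,K+1]$ to match the growth of $f$); without this, the existence step fails.
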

\begin{proof}
	From Theorem \ref{thm:function_app} we have that for every $\tilde{\epsilon}>0$ there exists a neural network $\mathcal{N}\in C^{\infty}(\R^{d}\times \R)$ such that $|f(x,y)-\mathcal{N}(x,y)|< \tilde{\epsilon}$ for every $(x,y)\in \overline{\om}\times [-K-1, K+1]$. 

	Thus, the existence of $\mathcal{N}$ such that \eqref{g_N} holds can be directly shown; note that $U=L^{\infty}(\om)$ is feasible in \eqref{g_N}.
	
	Consider next the function $N:\om\times \R \to \R$ given by 
	\[
	N(x,t):=
	\begin{cases}
	\int_{0}^{t} \mathcal{N}(x,s)\,ds + F(x,0), & -(K+1)\le t\le K+1,\\
	r_{0}(x) +F(x,t),    & t>K+1,\\
	r_{1}(x) +F(x,t),	& t<-(K+1),
	\end{cases}
	\]
	with $r_{0}(x):=\int_{0}^{K+1}\mathcal{N}(x,s)\,ds+F(x,0)-F(x,K+1)$, $r_{1}(x):=\int_{0}^{-K-1}\mathcal{N}(x,s)\,ds+F(x,0)-F(x,-K-1)$. Notice that $N(x,t)$ is continuous with $|{{N}}(x,t)-F(x,t)|< \epsilon(K+1)$ for every $t\in \R$ and $x\in \Omega$. Next we apply some  smoothing of $N(x,\cdot)$ in a small neighbourhood of $\om\times \{-K-1\}$ and $\om\times \{K+1\}$ such that the previous approximation estimate still holds true,  and continue to use the symbol $N$ for the result. Then $N(x,\cdot)$ is differentiable with respect to the second variable for every $x\in \Omega$. Consider now the minimization problem 
	\begin{equation}\label{eq:nonconvex_variational_learn}
		\inf_{y\in H^{1}(\Omega)} \frac{1}{2} \|\nabla y\|_{L^{2}(\Omega)}^{2} +\int_{\Omega} {N}(x,y)\,dx -\int_{\Omega} uy\,dx.
	\end{equation}
	One can now prove existence of a solution to \eqref{eq:nonconvex_variational_learn} analogously to the proof of Proposition \ref{prop:existence_variation} for \eqref{eq:non_convex_variational}. 
	We can show that the functional in $y\mapsto \int_{\Omega} N(x,y)\,dx$ is Frech\'et differentiable in $H^{1}(\Omega)$ with Frech\'et derivative $h\mapsto \int_\Omega \partial_y N(x,y)h\,dx$,  see discussion after this proof. Thus any solution to \eqref{eq:nonconvex_variational_learn} satisfies the PDE
	\begin{equation}\label{eq:nonconvex_learn_ex}
		-\Delta y + \partial_y N(\cdot,y)=u,\quad  \text{ in }\; \Omega , \quad
		\partial_\nu y=0\;  \text{ on }\; \partial \Omega.
	\end{equation}
	By following estimates analogous to the ones leading to \eqref{eq:y0_C_estimate}, we have  in view of \eqref{eq:y0_C_estimate_01}--\eqref{eq:y0_C_estimate_02} and \eqref{g_N}, that any solution $y_{0}$ also satisfies $\|y_{0}\|_{C(\overline{\Omega})}<K$ when $\epsilon$ is sufficiently small. Since $\partial_y N=\mathcal{N}$ on $\Omega\times [-K,K]$ we conclude that $y_{0}$ is a solution of \eqref{eq:nonconvex_learn}.
\end{proof}
Concerning the announced differentiability of $\Phi_N(y):=  \int_{\Omega} {N}(x,y)\,dx$, define 
\[\Phi_N^\prime(y)h:=\int_{\Omega}\partial_y  {N}(x,y)h\,dx.\]
Since $\frac{\abs{\Phi_N(y+h)-\Phi_N(y)-\Phi_N^\prime(y)h}}{\norm{h}_{H^1(\Omega)}}
		=\frac{\abs{\Phi_N^\prime(y+\tau_h h)h-\Phi_N^\prime(y)h}}{\norm{h}_{H^1(\Omega)}}$ for some $\tau_h\in L^\infty(\Omega)$ with $\norm{\tau_h}_{L^\infty(\Omega)}\leq 1$, using the mean value theorem along with $H^1(\Omega)\subset L^{\frac{r}{r-1}}(\Omega)$, we have for a $C>0$
\begin{equation}\label{eq:differ_N}
	\begin{aligned}
		\frac{\abs{\Phi_N(y+h)-\Phi_N(y)-\Phi_N^\prime(y)h}}{\norm{h}_{H^1(\Omega)}}
		\leq C \norm{\partial_y  ({N} (\cdot,y+\tau_h h)-{N} (\cdot,y))}_{L^{r}(\Omega)}.
	\end{aligned}
\end{equation}
 Note that by definition, the growth rate of ${N}(x,\cdot)$ outside of $[-K-1,K+1]$ is exactly the same as the one of $F(x,\cdot)$.
Therefore $\partial_y  {N}(\cdot,y)$ is indeed an element of $L^{r}(\Omega)$. 
Finally, we need to verify that 
\begin{equation*}
	\begin{aligned}
		\lim_{h\to 0} \norm{\partial_y  {N}(y+\tau_h h)-\partial_y  N(y)}_{L^{r}(\Omega)}=0 \quad \text{ for } h\in H^1(\Omega).
	\end{aligned}
\end{equation*}
This is true due to the continuity of the Nemytskii operator $\partial_y  {N}:L^{\frac{r}{r-1}}(\Omega)\to L^{r}(\Omega)$.
\begin{remark}\label{rem:mean_value_thm}
Notice that in \eqref{eq:differ_N} the mean value theorem is applied for every $x\in\om$ and $\tau_{h}$ is defined as a selector function of the multi-valued map $\tau:\om\rightrightarrows [0,1]$ with 
\[\tau(x)=\{\lambda\in [0,1]:\, N(x,y(x)+h(x))-N(x,y(x))- \partial_y  N(x,y(x)+\lambda h(x))h(x)=0\}.\]
Even though by definition $\tau_{h}$ is  a bounded function, one still needs to show its measurability such that $\tau_{h}\in L^{\infty}(\om)$. Such a measurable selector function is indeed guaranteed by the Kuratowski--Ryll--Nardzewski selection theorem \cite[Theorem 18.13]{aliprantis} whose conditions can be verified in our case. In fact, we may choose $\tau_{h}(x):=\max \tau(x)$; see \cite[Theorem 18.19]{aliprantis}.
\end{remark}

Note that the above set up covers a wide range of problems, including the class of problems where the nonlinear function $f(\cdot,\cdot)$ is strictly monotone with respect to the second variable. In that case, the nonlinear PDE \eqref{eq:state_eq} admits a unique solution  \cite{Tro10}. We also point out that in the monotone case direct methods allow to prove the existence of solutions and energy bounds  for a wider array of monotone nonlinearities (such as, e.g., exponential functions). Moreover in that case, the regularity  and growth conditions on the nonlinear function $f$  can be relaxed. 
However, as pursuing such a generality is not the focus of the current paper, we skip detailed discussions here. We note however that structural aspects of the control problem such as first-order optimality, adjoints etc. remain intact even under relaxed conditions.

In order to give an example on this, we show in the next proposition how strict monotonicity  for the learning-based model can indeed be preserved.
\begin{proposition}\label{prop:N_approx_f}
	Let $f:\Omega\times \R\to \R$ satisfy Assumption \ref{assu:non_monotone} and  $\partial_y  f(x,y)\geq C_f$  for almost every $x\in\om$ and $y\in\R$ for some $C_f>0$. We additionally assume that $f\in C(\om\times \R)$.
	Then for every $\epsilon>0$, for every compact set $\Omega_c \subset \Omega$, and for every $M>0$, there exists a neural network $\mathcal{N}:=\mathcal{N}^{\epsilon}_{\Omega_c , M} \in C^{\infty}(\R^{d} \times \R)$ such that
	\begin{align}
		&|f(x,z)-\mathcal{N}(x,z)|<\epsilon, \quad \text{ for every } x\in \Omega_c \text{ and every } z\in [-M, M], \label{NN_f_1}\\\
		&\partial_{z}\mathcal{N}(x,z)\ge C_{\mathcal{N}},\quad \text{ for all } x\in \Omega \text{ and } z\in[-M, M] \text{  for some } C_{\mathcal{N}}>0.\label{NN_f_2}
	\end{align}
	If $f\in C^1(\Omega\times \R)$, then we have in addition that 
	\begin{align}
		&|\partial_{z} f(x,z)-\partial_{z}\mathcal{N}(x,z)|<\epsilon, \quad \text{ for all } x\in \Omega_c \text{ and } z\in [-M, M]. \label{NN_nablaf_1}
	\end{align}
\end{proposition}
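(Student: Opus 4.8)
The plan is to decouple the two demands placed on $\mathcal{N}$: it must approximate $f$ (and, in the $C^1$ case, $\partial_z f$) on the sampling region, and it must have a \emph{uniform strictly positive} lower bound on $\partial_z\mathcal{N}$ over all of $\Omega\times[-M,M]$. The second demand is the crux, because a generic network matching $f$ in value has a completely uncontrolled derivative. The idea is to first regularize $f$ by mollifying in the $z$-variable only, in a way that transmits the bound $\partial_z f\ge C_f$ unchanged, and then invoke the simultaneous function/derivative approximation result, Theorem~\ref{thm:deriv_app}, choosing the approximation tolerance below $C_f/2$.

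First I would fix a nonnegative mollifier $\phi_\delta\in C_c^\infty(\R)$ with $\int_\R\phi_\delta=1$ and set $f_\delta(x,z):=\int_\R f(x,z-w)\phi_\delta(w)\,dw$. Since $f\in C(\om\times\R)$, the function $f_\delta$ is jointly continuous, and the representation $\partial_z f_\delta(x,z)=\int_\R f(x,z-w)\phi_\delta'(w)\,dw$ shows that $\partial_z f_\delta$ is jointly continuous as well, so $f_\delta$ is jointly $C^1$. Crucially, for almost every $x$ (those for which $f(x,\cdot)\in C^1$, by Assumption~\ref{assu:non_monotone}) integration by parts yields $\partial_z f_\delta(x,z)=\int_\R \partial_z f(x,z-w)\phi_\delta(w)\,dw\ge C_f$, because $\phi_\delta\ge0$ has unit mass; since $\partial_z f_\delta$ is continuous and the full-measure set of good $x$ is dense, the bound $\partial_z f_\delta\ge C_f$ in fact holds for \emph{every} $(x,z)$. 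Moreover $f_\delta\to f$ uniformly on the compact set $\overline{\om}\times[-M,M]$ as $\delta\downarrow0$, by uniform continuity of $f$ there (using that $\om$ is bounded, so $\overline{\om}$ is compact).

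Next I would apply Theorem~\ref{thm:deriv_app} to $f_\delta$ on $\overline{\om}\times[-M,M]$, with input dimension $r=d+1$, scalar output, and the order-one multi-index in the $z$-direction, using a smooth non-polynomial activation $\sigma$ so that the resulting $\mathcal{N}\in C^\infty(\R^d\times\R)$. This produces a network with
\[
\sup_{\overline{\om}\times[-M,M]}\abs{f_\delta-\mathcal{N}}<\delta'\quad\text{and}\quad\sup_{\overline{\om}\times[-M,M]}\abs{\partial_z f_\delta-\partial_z\mathcal{N}}<\delta'.
\]
Choosing first $\delta$ small enough that $\norm{f-f_\delta}<\epsilon/2$ on this set and then $\delta'<\min\set{\epsilon/2,\,C_f/2}$, the triangle inequality gives $\abs{f-\mathcal{N}}<\epsilon$ on $\om_c\times[-M,M]$, which is \eqref{NN_f_1}; while the derivative estimate gives $\partial_z\mathcal{N}\ge\partial_z f_\delta-\delta'\ge C_f-C_f/2=C_f/2$ on $\overline{\om}\times[-M,M]\supset\om\times[-M,M]$, establishing \eqref{NN_f_2} with $C_{\mathcal{N}}:=C_f/2$.

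Finally, for the $C^1$ case, the extra hypothesis $f\in C^1(\om\times\R)$ makes $\partial_z f$ jointly continuous and hence uniformly continuous on $\overline{\om}\times[-M,M]$, so that $\partial_z f_\delta=(\partial_z f)\ast\phi_\delta$ (convolution in $z$) converges to $\partial_z f$ uniformly there as $\delta\downarrow0$. Shrinking $\delta$ and $\delta'$ further so that $\norm{\partial_z f-\partial_z f_\delta}+\delta'<\epsilon$, the triangle inequality yields $\abs{\partial_z f-\partial_z\mathcal{N}}<\epsilon$ on $\om_c\times[-M,M]$, i.e. \eqref{NN_nablaf_1}. The main obstacle throughout is precisely the preservation of the strict lower bound on the derivative across the approximation; the entire construction is engineered so that mollification carries $\partial_z f\ge C_f$ over to $f_\delta$ verbatim, after which keeping the network's derivative error below $C_f/2$ secures the strict monotonicity of $\mathcal{N}$ on the \emph{whole} of $\Omega\times[-M,M]$, not merely on the data region $\Omega_c$.
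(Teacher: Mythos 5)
Your construction has a genuine gap, and it sits exactly at the point you identify as the crux. Because you mollify in the $z$-variable only, your $f_\delta$ is defined precisely where $f$ is, namely on the \emph{open} set $\Omega\times\R$: the proposition assumes only $f\in C(\Omega\times\R)$, not $f\in C(\overline{\Omega}\times\R)$, so neither $f_\delta$ nor $\partial_z f_\delta$ need extend continuously to $\overline{\Omega}\times[-M,M]$, and $f$ need not be uniformly continuous near $\partial\Omega$ (consider $f(x,z)=z+\sin\left(1/\mathrm{dist}(x,\partial\Omega)\right)$, which satisfies all the hypotheses with $C_f=1$). This breaks two of your steps. First, the claim that $f_\delta\to f$ uniformly on $\overline{\Omega}\times[-M,M]$ ``by uniform continuity of $f$ there'' is unfounded; it does hold on $\Omega_c\times[-M,M]$, which is all that \eqref{NN_f_1} needs. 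Second, and fatally, you cannot apply Theorem~\ref{thm:deriv_app} to $f_\delta$ on the compact set $\overline{\Omega}\times[-M,M]$: that theorem furnishes uniform approximation on compact sets for functions that are (appropriately) smooth on $\R^r$, respectively on the compact set, and your $f_\delta$ is not even continuous up to $\partial\Omega$. Without a uniform bound $|\partial_z f_\delta-\partial_z\mathcal{N}|<C_f/2$ valid on \emph{all} of $\Omega\times[-M,M]$ you cannot conclude \eqref{NN_f_2}, whose entire point is strict monotonicity of $\mathcal{N}$ on the whole domain $\Omega$ and not merely on the data region $\Omega_c$. (A further, more minor slip: mollifying only in $z$ gives no differentiability in $x$, so $f_\delta$ is not ``jointly $C^1$''; this alone could be absorbed by the anisotropic form of Theorem~\ref{thm:deriv_app} with multi-index concentrated on the $z$-slot, but the domain issue cannot.)

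The paper's proof is organized around exactly this obstruction: it extends $f$ by zero to $\R^d\times\R$ and mollifies in all $d+1$ variables, so that $\tilde f_\delta\in C^\infty(\R^{d+1})$ and Theorems~\ref{thm:function_app} and \ref{thm:deriv_app} are legitimately applicable on $\overline{\Omega}\times[-M,M]$. The price is that monotonicity is no longer transmitted verbatim: near $\partial\Omega$ the mollification ball overlaps the region where the extension vanishes, and the paper invokes the Lipschitz regularity of $\partial\Omega$ to show that the mollifier mass remaining inside $\Omega\times\R$ is bounded below by some $c>0$ uniformly in the base point, whence $\partial_y\tilde f_\delta\ge C_f c>0$ on $\Omega\times\R$ and finally $C_{\mathcal{N}}=\tfrac{3}{4}C_f c$. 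Your observation that $z$-only mollification preserves the constant $C_f$ exactly is correct on $\Omega\times\R$, but the step your argument is missing — producing a globally defined smooth surrogate to which the network density theorem can be applied on a compact set containing all of $\Omega$ — is precisely what forces the paper's more elaborate extension-and-mollification construction and the degraded constant.
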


\begin{proof}
	Let $\epsilon>0$, $\Omega_c\subset \Omega$ compact, and $M>0$. Further, let $\tilde{f}:\R^{d} \times \R\to \R$ be the extension by zero of $f$ outside $\Omega \times \R$, $\rho_{\delta}$ a standard mollifier \cite[Sec.2.2.2]{MR3288271}, and $\tilde{f}_{\delta}:=\tilde{f}\ast \rho_{\delta}: \mathbb{R}^{d}\times \mathbb{R}\to \mathbb{R}$. Next we choose $\delta>0$ such that the following hold true: (i) $\bar B(x,\delta):=\{{\hat{x}}\in\mathbb{R}^d:\|{\hat{x}}-x\|_2\leq\delta\}\subset\Omega$ for every $x\in \Omega_c$, (ii) $\tilde{f}_{\delta}(x,y)=f_{\delta}(x,y)$ for $(x,y)\in \Omega_c\times \R$, and (iii) $|f(x,y)-\tilde{f}_{\delta}(x,y)|<\epsilon/2$ for every $x\in \Omega_c$, $y\in [-M,M]$. Moreover, one finds that for sufficiently small $\delta>0$ it holds that $\partial_{y} \tilde{f}_{\delta}(x,y)\ge C_{\tilde{f}}$ for some $C_{\tilde{f}}>0$ for all $x\in\Omega$, $y\in \R$. Indeed, note that Assumption \ref{assu:non_monotone} and the mean value theorem yield for almost every $x'\in\Omega$, $y_{1}<y_{2}$
	\begin{equation}\label{bigger_linear}
		f(x',y_{2})-f(x',y_{1})\ge C_{f} (y_{2}-y_{1}).
	\end{equation}
	Hence, using $\rho_\delta(\cdot)=\delta^{-(d+1)}\rho(\cdot / \delta)$ \cite[Sec.2.2.2]{MR3288271}, we have 
	\begin{align*}
	&	\tilde{f}_{\delta}(x,y_{1})
		= \int_{B_{\delta}(x,y_{1})\cap (\Omega\times \R)} \tilde{f}(x',y') \delta^{-d-1}\rho\left (\frac{(x,y_{1})-(x',y')}{\delta} \right)d(x',y')\\
	& \le \int_{B_{\delta}(x,y_{2})\cap(\Omega\times \R)} \left(\tilde{f}(x',y') -C_{f}(y_{2}-y_{1})\right)  \delta^{-d-1}\rho\left (\frac{(x,y_{2})-(x',y')}{\delta} \right)d(x',y')\\
	&	= \tilde{f}_{\delta}(x,y_{2})- C_{f} \Big (\underbrace{\int_{B_{\delta}(x,y_{2})\cap (\Omega\times \R)} \delta^{-d-1}  \rho\left (\frac{(x,y_{2})-(x',y')}{\delta} \right)}_{=:\tilde{C}} d(x',y')\Big)(y_{2}-y_{1})\\
	&	= \tilde{f}_{\delta}(x,y_{2})- C_{f}\tilde{C}(y_{2}-y_{1}).
	\end{align*}
	We now use the fact that the boundary of $\Omega$ is Lipschitz to deduce that for some small enough $\delta>0$ we have $\tilde{C}:=\tilde{C}_{x,y}>c$ for some $c>0$, for every $x\in\Omega$, $y\in \R$, and set $C_{\tilde{f}}:=C_{f}c$.  Hence from the last inequality above we deduce $\partial_{y} \tilde{f}_{\delta}(x,y)\ge C_{\tilde{f}}$. Utilizing now Theorems \ref{thm:function_app} and \ref{thm:deriv_app} for the compact set $\overline{\Omega}\times [-M,M]\subset \R^{d}\times \R$, we find a neural network $\mathcal{N}\in C^{\infty}(\R^{d}\times \R)$ such that $|\tilde{f}_{\delta}(x,y)-\mathcal{N}(x,y)|<\epsilon/2$ as well as $|\partial_{y} \tilde{f}_{\delta}(x,y)-\partial_{y} \mathcal{N}(x,y)|<C_{\tilde{f}}/4$  for every $x\in \overline{\Omega}$ and $y\in [-M,M]$. Then with the use of the triangle inequality we get \eqref{NN_f_1} and \eqref{NN_f_2} for $C_{\mathcal{N}}=\frac{3}{4}C_{\tilde{f}}$.
	
	Finally, when $f$ is also continuously differentiable in $\Omega \times \R$, we can proceed as before with the extra care to choose $\delta>0$ such that $|\partial_{y}f(x,y)-\partial_{y}\tilde{f}_{\delta}(x,y)|<\epsilon/2$ for every $x\in \Omega_c$, $y\in [-M,M]$.
\end{proof}

Note that if $f$ is bounded on $\om\times [-K,K]$, for instance  if $f\in C(\overline{\om}\times \R)$ as in Proposition \ref{prop:first_aprox}, then the estimate \eqref{g_N} holds here as well and if analogous conditions hold for the derivative of $f$ then with the help of \eqref{NN_nablaf_1} we also have
\begin{equation}\label{sup_L2_app_der}
	\sup_{\|y\|_{L^{\infty}(\Omega)}<K} \|\partial_{y}f(\cdot,y)-\partial_{y}\mathcal{N}(\cdot,y)\|_{U}\le \epsilon.
\end{equation}

\subsection{Error analysis for the control-to-state map}
Our next target is to show the error bounds \eqref{eq:operator_err} and \eqref{eq:operator_deriv_err} for the solution maps (control-to-state maps) of the learning-informed versus the original PDE. 
Before we proceed, we first show the local Lipschitz conditions \eqref{eq:Q_Lip} and \eqref{eq:second_Lip}. 
For the ease of presentation we confine ourselves to a
monotone $f(x,\cdot)$ here. For the nonmonotone $f(x,\cdot)$, we would require \eqref{eq:smallness} to be satisfied for solutions uniformly bounded by $K$. 
Consider the following pairs of equations for $i\in\{1,2\}$
\begin{equation}\label{eq:pair3}
	\left\{
	\begin{aligned}
		-\Delta y_i+ f(\cdot,y_i) &=u_i\;\text{ in }\;\Omega,\;\; \\
		\partial_{\nu} y_i &=0\;\;\text{ on }\;\partial\Omega,\;\;  
	\end{aligned} \right.\quad 
	\text{ and } \quad
	\left\{
	\begin{aligned}
		-\Delta p_i+\partial_y  f(x,\bar{y}_{i}) p_i &=v\;\text{ in }\;\Omega,\;\; \\
		\partial_{\nu} p_i &=0\;\text{ on }\;\partial\Omega,\;\;  
	\end{aligned} \right.\quad 
\end{equation}
where $v\in U$ is  unitary, $\overline{y}_{i}=\Pi (u_{i})$,  and $p_i=\Pi^\prime(u_i) v$ for $i=1,2$.
Taking the difference of the first equations in  \eqref{eq:pair3} for $i=1,2$, testing with $y_1-y_2$, and using the mean value theorem
we get for some $C_f>0$ that
\[ \begin{aligned}
C_{f} \norm{y_1-y_2}_{H}^2
&\leq \|\nabla y_{1}-\nabla y_{2}\|_{L^{2}(\Omega)}^{2}+ \int_{\Omega} (f(x,y_{1})-f(x,y_{2}))(y_{1}-y_{2})\,dx\\&= \int_{\Omega} (u_{1}-u_{2})(y_{1}-y_{2})\,dx
\leq   \norm{u_1-u_2}_U \norm{y_1-y_2}_H,
\end{aligned}
\]
which yields the Lipschitz property $\norm{y_1-y_2}_{H}\leq \frac{1}{C_f} \norm{u_1-u_1}_U  $.

In order to show the local Lipschitz continuity of $\Pi'$, we need to further assume condition \eqref{lip_partialy_f}.
Consider now the difference of the right-hand side equations for $i=1,2$ in  \eqref{eq:pair3}. Using standard PDE arguments (see, e.g., \cite[Theorem 4.7]{Tro10}) we find
\[ \begin{aligned}
\norm{p_1-p_2}_{H^1(\Omega)} &+\norm{p_1-p_2}_{C(\bar{\Omega})} 
\leq C\norm{(\partial_y  f(\cdot,\bar{y}_1) - \partial_y  f(\cdot,\bar{y}_2))p_1}_{L^2(\Omega)}\\
&\leq CL\norm{p_1}_{C(\overline{\Omega})}\norm{\bar{y}_1-\bar{y}_2}_{L^2(\Omega)}
\leq C\frac{L}{C_{f}} c\norm{v}_{L^2(\Omega)}\norm{u_1-u_2}_{L^2(\Omega)}.  
\end{aligned}
\]
Here, we also used the estimate $\|p_{1}\|_{C(\overline{\Omega})}\le c \|v\|_{L^{2}(\Omega)}$ from Theorem \ref{thm:continuity_multi_map}.

For the desired error bounds we focus now on the state equations
\begin{equation}\label{eq:pair1}
	\left\{
	\begin{aligned}
		-\Delta y +\mathcal{N}(x,y)&=u\;\text{ in }\;\om,\;\;  \\
		\partial_{\nu}y &=0\;\text{ on }\; \partial \om,\;\;  
	\end{aligned} \right.\quad 
	\text{ and } \quad
	\left\{
	\begin{aligned}
		-\Delta y +f(x,y)&=u  \;\text{ in }\;\om,\\
		\partial_{\nu}y&=0 \; \text{ on }\; \partial \om,
	\end{aligned}\right.
\end{equation}
and the associated adjoints
\begin{equation}\label{eq:pair2}
	\left\{
	\begin{aligned}
		-\Delta p +\partial_{y}\mathcal{N}(x,\bar{y})p&=v\;\text{ in }\;\om,\;\; \\
		\partial_{\nu} p &=0\;\text{ on }\; \partial \om,\;\;  
	\end{aligned} \right.\quad 
	\text{ and } \quad
	\left\{
	\begin{aligned}
		-\Delta p + \partial_y  f(x,\bar{y}) p &= v\;\;  \text{ in }\;\om,\\
		\partial_{\nu}p &=0\;\;  \text{ on }\; \partial \om.
	\end{aligned}\right.
\end{equation}

The main approximation result is stated below. It guarantees that the uniform approximation properties of the control-to-state operator $\Pi$ and its derivative (compare   \eqref{eq:operator_err} and \eqref{eq:operator_deriv_err} of Theorem \ref{thm:convergence} and Assumption \ref{assum:operator_derivative}, respectively) are met by the corresponding learning-informed operators.
\begin{proposition}\label{prop:state_error}
	Let $\epsilon>0$ and $M> K >0$, with $K$ being the constant from \eqref{eq:y0_C_estimate}.   Suppose the first inequality in \eqref{eq:smallness} holds for $f$ for every  $y$ such that $\|y\|_{L^{\infty}(\om)}\le K$.
	Assume 
	that $\mathcal{N}\in C^{\infty}(\R^{d} \times \R)$ satisfies the approximation property
	\begin{equation}\label{eq:f_app_error}
		\sup_{\|y\|_{L^{\infty}(\Omega)}<M}\norm{f(\cdot,y)-\mathcal{N}(\cdot,y)}_U \leq \epsilon,
	\end{equation}
	for $\epsilon>0$ sufficiently small.
	Then,  the following error estimate holds :
	\begin{equation}\label{eq:ope_est}
		\norm{y_{0}-y_{\epsilon}}_{H}\leq C \epsilon,  \quad \text{ for all  }\; u \in \mathcal{C}_{ad},
	\end{equation}
	where the constant $C>0$ depends only on $f$, and $y_{\epsilon}$, $y_{0}$ are solutions of the left and right equations of \eqref{eq:pair1} respectively.
	Moreover, assuming  \eqref{lip_partialy_f} and also that the condition
	\begin{equation}\label{eq:der_f_app_error}
		\sup_{\|y\|_{L^{\infty}(\Omega)}<M} \|\partial_{y}f(\cdot,y)-\partial_{y}\mathcal{N}(\cdot,y)\|_{U}\le \epsilon_{1},
	\end{equation}
	holds for sufficiently small $\epsilon_{1}>0$, then, there exist some constants $C_0>0$ and $C_1>0$ so that
	\begin{equation}\label{eq:ope_der_est}
		\norm{p_{0}-p_{\epsilon}}_{H^1(\Omega)\cap C(\overline{\Omega})}
		\leq C_1 \epsilon_1 +C_0\epsilon,  \quad \text{ for all  }\; u \in \mathcal{C}_{ad},
	\end{equation}
	where $p_{\epsilon}$, $p_{0}$ are solutions of the left and right equations of \eqref{eq:pair2} respectively.
\end{proposition}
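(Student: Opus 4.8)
The plan is to derive both estimates by subtracting the learning-informed equation from the original one, converting the resulting relation into a linear elliptic equation for the difference, and then invoking the energy estimates of Theorem~\ref{thm:exi_linear}. For the state bound \eqref{eq:ope_est}, I would subtract the two equations in \eqref{eq:pair1} to obtain $-\Delta(y_\epsilon-y_0)+\mathcal{N}(\cdot,y_\epsilon)-f(\cdot,y_0)=0$, and then split $\mathcal{N}(\cdot,y_\epsilon)-f(\cdot,y_0)=[\mathcal{N}(\cdot,y_\epsilon)-f(\cdot,y_\epsilon)]+[f(\cdot,y_\epsilon)-f(\cdot,y_0)]$. Applying the mean value theorem to the second bracket (with a measurable selector $\xi$ as in Remark~\ref{rem:mean_value_thm}) yields the linear equation $-\Delta(y_\epsilon-y_0)+\partial_y f(\cdot,\xi)(y_\epsilon-y_0)=f(\cdot,y_\epsilon)-\mathcal{N}(\cdot,y_\epsilon)$, whose right-hand side is controlled by $\epsilon$ in view of \eqref{eq:f_app_error}. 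Before proceeding, I would record that, by Proposition~\ref{prop:first_aprox} and Proposition~\ref{lem:uniform_C_norm_bounds}, both $y_\epsilon$ and $y_0$ are bounded by $K$ in $C(\overline{\om})$, so that $\xi$ stays bounded by $K$ as well and both the approximation \eqref{eq:f_app_error} and the smallness hypothesis are in force.

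Since $f$ need not be monotone, $\partial_y f(\cdot,\xi)$ may fail to be nonnegative, so I would mimic the device used in the proof of Theorem~\ref{thm:continuity_multi_map}: rewrite the equation with the nonnegative coefficient $\tilde\epsilon+(\partial_y f(\cdot,\xi))^+$ on the left, moving $(\tilde\epsilon-(\partial_y f(\cdot,\xi))^-)(y_\epsilon-y_0)$ to the right. Theorem~\ref{thm:exi_linear} then bounds $\norm{y_\epsilon-y_0}_{H^1(\om)}+\norm{y_\epsilon-y_0}_{C(\overline{\om})}$ in terms of $\epsilon$ plus $\norm{\tilde\epsilon+(\partial_y f(\cdot,\xi))^-}_{L^2(\om)}\norm{y_\epsilon-y_0}_{C(\overline{\om})}$. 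Because the first inequality in \eqref{eq:smallness} holds uniformly for all states bounded by $K$, for $\tilde\epsilon$ small enough the factor $(\tilde\epsilon+C_l)\norm{\tilde\epsilon+(\partial_y f(\cdot,\xi))^-}_{L^2(\om)}$ is $\le 1$, allowing the $C(\overline{\om})$-term to be absorbed on the left and producing \eqref{eq:ope_est} with a constant depending only on $f$ (through $C_h,C_l$ and the smallness margin).

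For the adjoint bound \eqref{eq:ope_der_est}, I would subtract the two equations in \eqref{eq:pair2} and write, after adding and subtracting $\partial_y\mathcal{N}(\cdot,\bar y_\epsilon)p_0$, the linear equation $-\Delta(p_\epsilon-p_0)+\partial_y\mathcal{N}(\cdot,\bar y_\epsilon)(p_\epsilon-p_0)=-[\partial_y\mathcal{N}(\cdot,\bar y_\epsilon)-\partial_y f(\cdot,\bar y_0)]p_0$. The coefficient difference on the right I would split as $[\partial_y\mathcal{N}(\cdot,\bar y_\epsilon)-\partial_y f(\cdot,\bar y_\epsilon)]+[\partial_y f(\cdot,\bar y_\epsilon)-\partial_y f(\cdot,\bar y_0)]$, controlling the first summand by $\epsilon_1$ via \eqref{eq:der_f_app_error} and the second by $L_M\norm{\bar y_\epsilon-\bar y_0}_{L^2(\om)}\le L_M C\epsilon$ via the Lipschitz condition \eqref{lip_partialy_f} together with the already established \eqref{eq:ope_est}. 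Since $v$ is unitary, the bound $\norm{p_0}_{C(\overline{\om})}\le C_c$ from \eqref{eq:adjoint_bounds} turns the right-hand side into an $L^2(\om)$ quantity of size $O(\epsilon_1+\epsilon)$; applying once more the near-coercive energy estimate (now with coefficient $\partial_y\mathcal{N}(\cdot,\bar y_\epsilon)$, whose negative part inherits smallness from that of $(\partial_y f)^-$ via the $\epsilon_1$-closeness of the derivatives) yields \eqref{eq:ope_der_est}.

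The main obstacle is the possible loss of coercivity in the nonmonotone regime: in both steps the linearized operator carries the coefficient $\partial_y f(\cdot,\xi)$ or $\partial_y\mathcal{N}(\cdot,\bar y_\epsilon)$, which may be negative, so Theorem~\ref{thm:exi_linear} is not directly applicable. The crux is therefore the positive-part/absorption argument and, for the adjoint estimate, the transfer of the smallness condition from $f$ to $\mathcal{N}$, which relies on the uniform $C(\overline{\om})$-bound $K$ (ensuring $\bar y_\epsilon,\bar y_0,\xi$ remain in the range where \eqref{eq:smallness}, \eqref{eq:f_app_error} and \eqref{eq:der_f_app_error} hold) and on \eqref{eq:der_f_app_error} being accurate enough that $(\partial_y\mathcal{N})^-$ inherits the smallness margin of $(\partial_y f)^-$.
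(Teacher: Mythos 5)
Your proposal is correct and follows essentially the same route as the paper's proof: subtract the equations, apply the mean value theorem with a measurable selector, use the positive-part/absorption device from Theorem~\ref{thm:continuity_multi_map} together with the energy estimates of Theorem~\ref{thm:exi_linear}, and split the derivative mismatch via the triangle inequality into the $\epsilon_1$-term and a Lipschitz term of size $O(\epsilon)$. The only (immaterial) difference is in the adjoint step, where you keep $\partial_y\mathcal{N}(\cdot,y_\epsilon)$ as the coefficient on the left and put $p_0$ on the right-hand side, whereas the paper keeps $\partial_y f(\cdot,y_0)$ and puts $p_\epsilon$ on the right; both variants require the same two ingredients --- the a priori $C(\overline{\Omega})$-bound on an adjoint state from \eqref{eq:adjoint_bounds} and the transfer of the smallness condition \eqref{eq:smallness} from $f$ to $\mathcal{N}$ via the $\epsilon_1$-closeness of the derivatives --- just deployed in swapped places.
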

\begin{proof}
	Let $y_\epsilon$ and $y_0$ be  solutions of the learning-informed PDE and the original PDE, respectively.  Recall that the $H^1$ norms of both $y_\epsilon$ and $y_0$ are bounded by $K>0$. Subtracting the two PDEs we get
	\begin{equation}\label{eq:diff_PDE}
		-\Delta (y_0-y_\epsilon) =\mathcal{N}(\cdot,y_\epsilon)-f(\cdot,y_0)\;\text{ in }\;\Omega \quad  \text{ and }  \quad \partial_\nu (y_0-y_\epsilon)=0\;\text{ on }\;\partial\Omega. 
	\end{equation}
	Using the same technique as in the proof of Theorem \ref{thm:continuity_multi_map}, the equation in \eqref{eq:diff_PDE} can be rewritten as
	\begin{equation}\label{eq:diff_PDE2}
	\left(	-\Delta +\kappa_{0} +(\partial_y  f(\cdot,\zeta_\epsilon))^+\right)(y_0 -y_\epsilon)   =\mathcal{N}(\cdot,y_\epsilon)-f(\cdot,y_\epsilon) + (\kappa_{0} -(\partial_y  f(\cdot,\zeta_\epsilon))^-)(y_0 -y_\epsilon)   ,
	\end{equation}	
	where $\zeta_\epsilon$ is a pointwise convex combination of $y_0$ and $y_\epsilon$ that results from a pointwise application of the mean value theorem, and $\kappa_{0}>0$ is a fixed small constant.   
	We have then the estimate
	\[
	\begin{aligned}
	&\frac{\kappa_{0}}{C_h}\norm{y_0-y_\epsilon}_{H^1(\Omega)} +\norm{y_0 -y_\epsilon}_{C(\overline{\Omega})}\\
	\leq&     (\kappa_{0}+C_l) (\norm{ \mathcal{N}(\cdot,y_\epsilon) -f(\cdot,y_\epsilon)}_{L^2(\Omega)}
	+  \norm{ (\kappa_{0}-(\partial_y  f(\cdot,\zeta_\epsilon))^{-})(y_0 -y_\epsilon)}_{L^2(\Omega)}),
	\end{aligned}
	\]
	Rearranging the above inequality, and taking into account the Lipschitz continuity of $\partial_y  f$ and the condition \eqref{eq:smallness} for $\zeta_{\epsilon}$ for which it holds $\|\zeta_{\epsilon}\|_{L^{\infty}(\om)}\le K$, for sufficiently small $\epsilon$ we derive finally  
	\[\norm{y_0 -y_\epsilon}_H  \leq C \epsilon. \]
	For deriving \eqref{eq:ope_der_est} we use a similar approach. Let $p_\epsilon$ and $p_0$ be the solutions of the left and right equations in \eqref{eq:pair2}, respectively. Subtracting these two equations  gives
	\begin{equation}\label{eq:error_pde2}
		\begin{aligned}
			-\Delta (p_\epsilon -p_0) +\partial_y  f(x,y_0)(p_\epsilon-p_0)&=(\partial_y  f(x,y_0) - \partial_y  \mathcal{N}(x,y_\epsilon))p_\epsilon\quad \text{ in }\;\Omega,\\
			\partial_\nu (p_\epsilon-p_0)&=0\quad \text{ on }\;\partial\Omega.
		\end{aligned}
	\end{equation}
	Using again the same trick as above, we rewrite \eqref{eq:error_pde2} as 
	\begin{equation}\label{eq:error_pde3}
		\begin{aligned}
			&	-\Delta (p_\epsilon -p_0) +(\kappa_{1}+(\partial_y  f(x,y_0))^+ (p_\epsilon-p_0)\\
			=&(\partial_y  f(x,y_0) - \partial_y  \mathcal{N}(x,y_\epsilon))p_\epsilon +(\kappa_{1}-(\partial_y  f(x,y_0))^-)(p_\epsilon-p_0),
		\end{aligned}
	\end{equation}
	and then similarly we get
	\begin{equation}\label{est_p_minus_peps}
		\norm{p_\epsilon-p_0}_{H^1(\Omega)}  \leq C \norm{p_\epsilon}_{C(\bar{\Omega})}\norm{\partial_y  f(\cdot,y_0) - \partial_y  \mathcal{N}(\cdot,y_\epsilon)}_{L^2(\Omega)},
	\end{equation}
	for some constant $C$ independent of both $p_0$ and $p_\epsilon$, but depending on the constants $C_h$ and $C_l$. The estimate in \eqref{est_p_minus_peps} holds also for $\norm{p_\epsilon-p_0}_{C(\overline{\Omega})}$ but with a different constant, say $\tilde{C}>0$.
	Focusing on the right-hand side of the  inequality above and using the triangle inequality we have
	\[\begin{aligned}
	\norm{\partial_y  f(\cdot,y_0) - \partial_y  \mathcal{N}(\cdot,y_\epsilon)}_{L^2(\Omega)}
	 &\leq
	\norm{\partial_y  f(\cdot,y_0) - \partial_y  f(\cdot,y_\epsilon)}_{L^2(\Omega)} \\
	+ &	\norm{\partial_y  f(\cdot,y_\epsilon) - \partial_y  \mathcal{N}(\cdot,y_\epsilon)}_{L^2(\Omega)}
	\leq L\norm{y_0-y_\epsilon}_{L^2(\Omega)}+\epsilon_1,
	\end{aligned}
	\]
	where $L$ is the local Lipschitz constant of $\partial_{y}f(\cdot,\cdot)$ for those $y\in H^1(\Omega)\cap C(\overline{\Omega})$ with $\norm{y}_{L^\infty(\Omega)}\leq K$.
	
	Finally we need to estimate $\|p_\epsilon\|_{C(\overline{\Omega})}$ in \eqref{est_p_minus_peps}. For this we note that for sufficiently small $\epsilon_{1}$, the second bound in \eqref{eq:adjoint_bounds} also holds for the solution of PDEs with $\mathcal{N}$. This yields the estimate
	\begin{equation}\label{est_p}
		\|p_\epsilon\|_{C(\overline{\Omega})}\le C_c \|v\|_{{L^{2}(\Omega)}},
	\end{equation}
	with the constant $C_c$ independent of $v$ and $\epsilon$. Finally we conclude
	\begin{align*}
		\|p_{0}-p_{\epsilon}\|_{H^1(\Omega)\cap C(\overline{\Omega})}
		&=\sup_{\|v\|_{{L^{2}(\Omega)}}\le 1} \|p_{0}-p_{\epsilon}\|_{H^{1}(\Omega)\cap C(\overline{\Omega})}\\
		&=\sup_{\|v\|_{{L^{2}(\Omega)}}\le 1} \norm{p_0-p_\epsilon}_{H^1(\Omega)} +\norm{p_0-p_\epsilon}_{C(\overline{\Omega})}\\
		&\le   C_c (C+\tilde{C})(L\epsilon +\epsilon_{1}) 
		\le C_{1}\epsilon_{1} +C_{0}\epsilon,
	\end{align*}
	which ends the proof.
\end{proof}

\begin{remark}\label{rmk:multivalue_map}
	Notice that the condition \eqref{eq:smallness} imposed to all $y$ with $\norm{y}_{L^\infty(\Omega)}\leq K$ in fact enforces a unique solution to the semilinear PDE \eqref{eq:state_eq}, which also satisfies the same constraint. It is possible to treat the multi-solution case using  a similar strategy as Theorem \ref{thm:continuity_multi_map}, by using $\Gamma$--convergence arguments to show the convergence of $y_\epsilon \to y$ in a certain sense, and then apply the condition \eqref{eq:smallness} to $y_0$.
\end{remark}
\begin{remark}\label{rmk:zeroNeumann}
	The results above also hold for more general types of boundary conditions, including homogeneous Dirichlet boundary conditions.
\end{remark}
\subsection{Existence of solutions of the learning-informed optimal control}
After having replaced the unknown $f$ by the neural network based approximation $\mathcal{N}$ we are now interested in the following optimal control problem with a partially learning-informed state equation:
\begin{align}\label{eq:cost_NN}
	&\text{minimize}\quad J(y,u):= \frac{1}{2}\|y-g\|^2_{L^2(\Omega)} +\frac{\alpha}{2} \|u\|_{L^2(\Omega)}^2,\quad\text{over }(y,u)\in H^1(\Omega)\times L^2(\Omega),\\
	&\text{s.t. } \quad
	\label{eq:state_eq_NN}
	-\Delta y  +\mathcal{N}(x,y)=u\quad \text{ in }\;\om,\quad
	\partial_{\nu}y=0\;\;  \text{on }\; \partial \om,\\
	&\phantom{\text{s.t. }}\quad\; u\in\mathcal{C}_{ad}.\label{eq:control_constr_NN}
\end{align}

In what follows we prove the existence of an optimal control for the problem \eqref{eq:cost_NN}--\eqref{eq:control_constr_NN}. Here we consider that the control-to-state operator is single-valued, that is, the learning-informed PDE \eqref{eq:state_eq_NN} has a unique solution for every $u\in\mathcal{C}_{ad}$.
According to Proposition \ref{pro:existence_wsc},  we only need to check that the operator $Q_\mathcal{N}:U \to H$ is weakly sequentially closed. In fact, an even stronger property holds true as we show next.  

\begin{proposition}\label{prop:existence_opt_con}
	Let  $\mathcal{N}\in C^{\infty}(\R^{d}\times \R)$ be a neural network such that any solution of the learning-informed PDE \eqref{eq:state_eq_NN} satisfies a bound as in \eqref{eq:y0_C_estimate}. Then the reduced operator $Q_\mathcal{N}:U=L^2(\Omega)\supset C_{ad}\to  H=L^2(\Omega)$ induced from the control-to-state map of \eqref{eq:state_eq_NN} is weakly-strongly continuous, in the sense that if $u_{n} \rightharpoonup u$ in $U$ and $y_{n}\in \Pi_\mathcal{N}(u_n)$ then, $y_{n}\to y$ in $H$ for some $y\in \Pi(u)$.
\end{proposition}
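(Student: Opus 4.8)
The plan is to exploit that, thanks to the compact embedding $H^{1}(\om)\hookrightarrow\hookrightarrow L^{2}(\om)$, a uniform $H^{1}$ bound on the states already upgrades weak $H^{1}$ convergence to strong $H=L^{2}$ convergence; the real work is then only to pass to the limit in the nonlinear Nemytskii term. First I would fix a sequence $u_{n}\rightharpoonup u$ in $U=L^{2}(\om)$ with $u_{n}\in\mathcal{C}_{ad}$, and $y_{n}\in\Pi_{\mathcal{N}}(u_{n})$, so that each $y_{n}$ is a weak solution of \eqref{eq:state_eq_NN} with datum $u_{n}$. Since $\mathcal{C}_{ad}$ is bounded in $L^{\infty}(\om)$, the standing hypothesis (a bound of the form \eqref{eq:y0_C_estimate}) yields a constant $K>0$, independent of $n$, with $\|y_{n}\|_{H^{1}(\om)}+\|y_{n}\|_{C(\overline{\om})}\le K$ for all $n$. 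In particular $(y_{n})_{n\in\N}$ is bounded in $H^{1}(\om)$ and uniformly bounded in $L^{\infty}(\om)$.

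Next, from the $H^{1}$ bound I would extract an (unrelabelled) subsequence with $y_{n}\rightharpoonup y$ in $H^{1}(\om)$ for some $y\in H^{1}(\om)$. By the Rellich--Kondrachov compact embedding $H^{1}(\om)\hookrightarrow\hookrightarrow L^{2}(\om)$ this gives $y_{n}\to y$ strongly in $L^{2}(\om)=H$ and, along a further subsequence, $y_{n}\to y$ almost everywhere in $\om$. This already produces the desired strong convergence in $H$, so it only remains to identify the limit $y$ as a solution, i.e.\ to verify $y\in\Pi_{\mathcal{N}}(u)$.

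To that end I would pass to the limit in the weak formulation
\begin{equation*}
\int_{\om}\nabla y_{n}\cdot\nabla\varphi\,dx+\int_{\om}\mathcal{N}(x,y_{n})\varphi\,dx=\int_{\om}u_{n}\varphi\,dx,\qquad \varphi\in H^{1}(\om).
\end{equation*}
The first term converges because $\nabla y_{n}\rightharpoonup\nabla y$ in $L^{2}(\om)$, and the right-hand side converges because $u_{n}\rightharpoonup u$ in $L^{2}(\om)$ while $\varphi\in L^{2}(\om)$. The main point is the nonlinear term: since $\|y_{n}\|_{L^{\infty}(\om)}\le K$, the values $y_{n}(x)$ lie in the compact interval $[-K,K]$, on which the continuous function $\mathcal{N}$ is bounded, say by $C_{K}$, so that $|\mathcal{N}(x,y_{n})\varphi|\le C_{K}|\varphi|\in L^{1}(\om)$; combining the pointwise a.e.\ convergence $y_{n}\to y$ with the continuity of $\mathcal{N}(x,\cdot)$ gives $\mathcal{N}(\cdot,y_{n})\to\mathcal{N}(\cdot,y)$ a.e., whence dominated convergence yields $\int_{\om}\mathcal{N}(x,y_{n})\varphi\,dx\to\int_{\om}\mathcal{N}(x,y)\varphi\,dx$ (equivalently, this is the continuity of the Nemytskii operator induced by $\mathcal{N}$ on the bounded range). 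Hence $y$ is a weak solution of \eqref{eq:state_eq_NN} with datum $u$, i.e.\ $y\in\Pi_{\mathcal{N}}(u)$.

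Finally, to upgrade from the extracted subsequence to the full sequence I would invoke the single-valuedness assumption of this subsection: every subsequence of $(y_{n})_{n\in\N}$ admits, by the same argument, a further subsequence converging strongly in $H$ to a solution of \eqref{eq:state_eq_NN} with datum $u$, and since such a solution is unique (equal to $\Pi_{\mathcal{N}}(u)$) the standard subsequence principle forces the entire sequence $y_{n}\to y=\Pi_{\mathcal{N}}(u)$ in $H$. The only genuinely delicate step is the passage to the limit in the Nemytskii term, which is precisely where the uniform $L^{\infty}$ bound and the compactness of the embedding are both needed; all remaining steps are linear and routine.
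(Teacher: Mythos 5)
Your proof is correct and follows essentially the same route as the paper: uniform $H^{1}(\Omega)\cap C(\overline{\Omega})$ bounds from \eqref{eq:y0_C_estimate}, the compact embedding $H^{1}(\Omega)\hookrightarrow L^{2}(\Omega)$ to upgrade weak convergence to strong convergence in $H$, passage to the limit in the weak formulation, and the subsequence--uniqueness principle to conclude for the whole sequence. The only immaterial difference is the treatment of the Nemytskii term: the paper exploits the local Lipschitz continuity of $\mathcal{N}$ (available since $\mathcal{N}\in C^{1}$) together with the uniform $L^{\infty}$ bound to get $\|\mathcal{N}(\cdot,y_{n})-\mathcal{N}(\cdot,\bar{y})\|_{L^{2}(\Omega)}\le L_{M}\|y_{n}-\bar{y}\|_{L^{2}(\Omega)}\to 0$, whereas you use a.e.\ convergence along a further subsequence plus dominated convergence; both arguments are valid.
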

\begin{proof}
	Let  $u_{n} \rightharpoonup u$ in $U$ and $y_n\in \Pi_\mathcal{N}(u_n)$. Then $(y_n)_{n\in\mathbb{N}} $ is a bounded sequence in $Y=H^1(\Omega)\cap C(\bar\Omega)$ as $(u_n)_{n\in\mathbb{N}} \subset U$ is a bounded set in $L^{\infty}(\om)$. Thus, up to a subsequence, still denoted by $(y_n)$, there is $\bar{y}\in {H^{1}(\om)}$ such that $y_n \rightharpoonup \bar{y}$ in ${H^{1}(\om)}$.
	Since ${H^{1}(\om)}$ embeds compactly into $H$, we can consider that $y_n \to \bar{y}$ strongly in $H$.
	We show that $\bar{y}=\Pi_\mathcal{N}(\bar{u})$, i.e., $\bar{y}$ is a weak solution of the PDE in \eqref{eq:state_eq_NN}. Since $y_n$ is the weak solution of  \eqref{eq:state_eq_NN} with right hand-side $u_{n}$, we have
	\begin{equation}\label{eq:PDE_sequence}
		\int_\Omega \nabla y_n\cdot \nabla v\,dx+\int_\Omega \mathcal{N}(x, y_n) v\,dx=\int_\Omega u_n v\,dx\quad\text{for all }v\in H^1(\Omega).
	\end{equation}
	We only need to show that 
	\begin{equation}\label{eq:nonlinear_error}
		\int_\Omega\left( \mathcal{N}(x, y_n) - \mathcal{N}(x, \bar{y})\right) v\,dx=0,
	\end{equation}
	since the convergence of the other two terms readily follows from weak convergence.
	Taking into account that $\mathcal{N}\in C^{1}(\R^{d}\times \R)$ 
	we have that for every $M>0$, there exists an $L_{M}>0$ such that for every $x\in\Omega$ and $y_{1},y_{2}\in [-M,M]$, we have
	\begin{equation}\label{N_Lip}
		|\mathcal{N}(x,y_{1})-\mathcal{N}(x,y_{2})|\le L_{M} |y_{1}-y_{2}|.
	\end{equation}
	Using the estimate \eqref{eq:y0_C_estimate}, we have that $(y_{n})_{n\in\mathbb{N}}$ and, hence, $\bar{y}$ are uniformly bounded in $L^{\infty}(\Omega)$, say by a constant $M>0$. Thus we have 
	\begin{align*}
		\| \mathcal{N}(\cdot, y_n) - \mathcal{N}(\cdot, \bar{y})\|_{U}
		\le 
		L_{M}\|y_{n}-\bar{y}\|_{H}.
	\end{align*}
	Due to the inequality above and the strong convergence of $y_n\to \bar{y}$ in $H$,  \eqref{eq:nonlinear_error} is verified.
	Passing to the limit $n\to \infty$ in \eqref{eq:PDE_sequence} we get that $\bar{y}$ is a weak solution of \eqref{eq:state_eq_NN}  corresponding to $\bar{u}$. Since any other subsequence of $(y_{n})_{n\in\mathbb{N}}$ will have a further subsequence that converges to $\Pi_{\mathcal{N}}(\bar{u})$ the assertion follows.
\end{proof}

For the error analysis on the optimal controls of \eqref{eq:cost_NN} with \eqref{eq:state_eq_NN} to solutions from \eqref{eq:cost} with \eqref{eq:state_eq}, we can readily apply Theorems \ref{thm:convergence}, \ref{thm:error_bound} and \ref{thm:error_bound2} for the monotone function $f$, in view of the error bounds shown in Proposition \ref{prop:state_error}.
For the nonmonotone case, these results are still applicable up to a selection of subsequences of the solutions.

Finally, we would like to make a remark regarding the approximation of $f:\Omega \times \R \to \R$ in a semilinear PDE, given a set of input-output data.
The input data is a family of sampled points from $\Omega \times [y_{min}, y_{max} ] $, denoted by  $(x_i,y(x_i))_{i\in I}$, and the outputs are the corresponding values $(f(x_i,y(x_i)))_{i\in I}$, which are computed from \eqref{eq:state_eq} via 
\[f(x_i,y(x_i))=u(x_i)+\Delta y(x_i).\]
In real world applications, we assume that we have access to the data points $y(x_{i})$ and thus also to $\Delta y(x_{i})$, while $u$ is a control which is at our disposal to be tuned.
In order to be consistent with the functional analytic setting, one  needs to give pointwise meaning to $\Delta y$, which in general is an object in $H^{-1}(\Omega)$, only. This can be achieved by choosing controls $u\in\mathcal{C}_{ad}$ of sufficient regularity.
Indeed, since both $f$ and $y$ are continuous functions when choosing continuous $u$, equation \eqref{eq:state_eq} implies that $\Delta y$ is continuous, too, and hence admits a pointwise evaluation.

\subsection{Numerical algorithm for the optimal control problems}\label{subsec:Newton}

In this section we briefly describe an algorithm for solving the optimal  control problem \eqref{eq:cost}. Even though it is suitable for rather general problems, we outline it here for the version with the learning-informed state equation.

In order to compute a numerical solution, we first state the Karush-Kuhn-Tucker (KKT) conditions, which are justified by constraint regularity (see \cite{ZowKur79} for a general setting):
\begin{equation}\label{eq:stationary1}
	\begin{aligned}
		- \Delta y +\mathcal{N}(\cdot,y)  -u&=0\;   \text{ in } \Omega ,\quad
		\partial_\nu y=0\;   \text{ on } \partial \Omega ,\\
		-  \Delta p+   \partial_y \mathcal{N}(\cdot,y) p +y&= g\;   \text{ in } \Omega ,\quad
		\partial_\nu p=0\;   \text{ on } \partial \Omega,\\
		-p+\lambda + \alpha u&=0\;  \text{ in } \Omega ,\\
		\lambda - \max (0,\lambda+ c(u-\overline{u})) -\min(0,\lambda + c(u-\underline{u}))&=0\;   \text{ in } \Omega ,
	\end{aligned}  
\end{equation}
where $c>0$ is some constant, which in practice, is useful to be chosen $c=\alpha$. 
The first  equation with its boundary condition is just the learning-informed PDE constraint, while  the next one is the associated adjoint equation. The third equation represents optimality w.r.t. $u$ and, together with the last one, it  incorporates the control constraint $\underline{u} \leq u \leq \overline{u}$. Indeed, notice that the last equation is equivalent to the usual complementarity system as it secures a.e. that
\begin{equation*}
	\lambda=0:  \: \underline{u} < u < \overline{u},\quad
	\lambda \geq 0:  \:  u=\underline{u},\quad
	\lambda \leq 0:  \:  u=\overline{u}.
\end{equation*}
Letting $\phi:=(y,u,p,\lambda)^\top$, \eqref{eq:stationary1} can be compactly rewritten  as the nonsmooth equation
\begin{equation}\label{eq:stationary2}
	M_\mathcal{N}(\phi) -(0,g,0,0)^\top=0.
\end{equation}
For solving \eqref{eq:stationary2}, we employ a semi-smooth Newton method (SSN); see, e.g., \cite{HinItoKun02}. It operates as follows: Given an initial guess $\phi_0$ of a solution to \eqref{eq:stationary2}, compute for all $k=0,1,2,\ldots$
\[ \begin{aligned}
\phi_{k+1}& =\phi_k - (\mathcal{G}_\mathcal{N}(\phi_k) )^{-1}(M_\mathcal{N}(\phi_k)  -(0,g,0,0)^\top).
\end{aligned}
\]
Here, $\mathcal{G}_\mathcal{N}(\phi_k)$ is a Newton derivative of the operator $ M_\mathcal{N}$ at $\phi_k$ given by
\[\mathcal{G}_\mathcal{N}(\phi_k)= 
\left(
\begin{array}{cccc}
- \Delta   +\partial_y \mathcal{N} (\cdot ,y_k) & 0&- \text{ Id}  & 0\\
\partial_{yy} \mathcal{N}(\cdot ,y_k) p_k + \text{ Id} & -  \Delta +   \partial_y \mathcal{N}(\cdot ,y_k) & 0 & 0\\
0 & - \text{ Id} & \alpha \text{ Id} &  \text{ Id}\\
0& 0 & - cG_k &   \text{ Id}-G_k
\end{array}  \right),
\]
where for $x\in\Omega$,
\[
G_k(x):=\left\{  
\begin{aligned}
1, & \quad \text{if }c(\underline{u}(x)-u_k(x))\leq \lambda_k(x) \leq c(\overline{u}(x)-u_k(x)),\\
0, & \quad \text{else},
\end{aligned} \right. 
\]
is a Newton derivative that corresponds to the nonsmooth functions $\max(0,\cdot)$ and $\min(0,\cdot)$ in \eqref{eq:stationary1}. SSN can be shown to converge locally at  a superlinear rate, provided $\phi_0$ is sufficiently close to a solution and the selection of Newton derivatives for $M_\mathcal{N}$ is uniformly bounded and invertible along the iteration sequence; see \cite{HinItoKun02} and \cite{HinUlb04}.
Moreover, under a nondegeneracy assumption the method exhibits a mesh independent convergence upon proper discretization of \eqref{eq:stationary2}; see \cite{Hin07, HinUlb04}. Globalization of the SSN iteration can be achieved, e.g., by employing a path search \cite{DirFer95, Ral94}, which we did not pursue here, however. Rather we intertwined SSN with a sequential quadratic programming (SQP) iteration, with the latter specified below. This combination helped the globally convergent SQP solver to escape from unfavorable local minimizers or stationary points. Obviously, one cannot expect a general theoretical result supporting such a behavior. It, hence, merely reflects a useful numerical observation, in particular in connection with our example with a nonmonotone $f$.

\paragraph{SQP algorithm}
Here we consider the reduced SQP approach which operates on the reduced optimal control problem. Given an estimate $u_k$ of an optimal control, in every iteration it seeks to solve the following quadratic problem:
\begin{equation}\label{eq:SQP}
	\begin{aligned}
		&\text{minimize}\quad \; 
		\langle \mathcal{J}_{\mathcal{N}}^\prime(u_k) + \frac{1}{2} H_k(u_k)\delta_u,\delta_u\rangle_{U^*,U}, \quad\text{over }\delta_u\in U,\\
		&\text{subject to }  \;  \underline{u} \leq u_k+\delta_u \leq  \overline{u}\quad \text{a.e. in }\Omega,
	\end{aligned}
\end{equation}
where $\mathcal{J}_{\mathcal{N}}^\prime(u_k)$ is the Fr\'echet derivative of the reduced functional $\mathcal{J}_{\mathcal{N}}$, and $H_k(u_k)$ is a positive definite approximation of the second-order derivative of $\mathcal{J}_{\mathcal{N}}$ at $u_k$. 
First-order optimality for \eqref{eq:SQP} yields
\begin{equation}\label{eq:SQP_optimal}
	\begin{aligned}
		& \mathcal{J}_{\mathcal{N}}^\prime(u_k) +H_k(u_k)\delta_u +\lambda=0 , \\
		&  \lambda - \max (0,\lambda+ c(u_k+\delta_u -\overline{u})) -\min(0,\lambda + c(u_k +\delta_u -\underline{u}))=0,
	\end{aligned}
\end{equation}
for some fixed $c>0$. This nonsmooth system can be again solved using a semi-smooth Newton method which yields $\delta_{u,k}$ and $\lambda_k$.
Concerning the Hessian approximation, in our implementation we choose $H_k(u_k):=(\mathcal{J}_{\mathcal{N}}^\prime(u_k))^*\mathcal{J}_{\mathcal{N}}^\prime(u_k)$, where '$^*$' denotes the adjoint operator. 

For globalization we use a classical line search with the merit function
\begin{equation}\label{eq:merit}
	\Phi_k(\mu)=\mathcal{J}_{\mathcal{N}}(u_k+\mu\delta_{u,k}) + \beta_k \Psi_k(\mu) \quad \text{  for some  } \beta_k >0,
\end{equation}
where 
\[\Psi_k(\mu):= \norm{(u_k+\mu\delta_{u,k}-\overline{u})^+}_{L^2(\Omega)}+\norm{(u_k+\mu\delta_{u,k}-\underline{u})^-}_{L^2(\Omega)},\]
with $ a^+:=\max\set{a,0}, \text{ and }\; a^-:=\min \set{0,a}$.
We employ a backtracking line search method starting with $\mu:=1$ to decide on the step length. Note that the reduced problem requires to enforce the PDE constraint for every $u_k$. For this purpose a (smooth) Newton iteration was embedded into every SQP update step.
This Newton iteration is terminated when $\|-\Delta_h y_k+\mathcal{N}(\cdot,y_k)-u_k  \|_{H^{-1}(\Omega)}\leq\text{tol}=10^{-16}$ or a maximum of 15 iterations was reached.

To summarize, we utilize the following overall algorithm:
\renewcommand{\thealgorithm}{\arabic{algorithm}}
\setcounter{algorithm}{0}
\begin{algorithm}	
	\begin{itemize}
		\item[$\bullet$] {Initialization:} Choose $\phi_0:=(y_0,\; u_0\;, p_0,\; \lambda_0)$, and compute $\Phi_0(0)$. Fix a lower bound $\epsilon>0$ for the step length, choose $\rho\in (0,1)$, and $\beta_0>0$. Set $k:=0$.
		\item[$\bullet$] {Unless the stopping criteria are satisfied, iterate:} 
		\begin{itemize}
			\item[(1)] Compute an update direction $\delta_{u,k}$ by solving \eqref{eq:SQP_optimal} using SSN. Let $\mu_k^0:=1$, $y_k^{-1}:=y_k$ and set $l:=0$. Iterate:			
			\begin{itemize}
				\item[(a1)]  Compute $y_k^l:=\Pi_{\mathcal{N}}(u_k+\mu_k^l \delta_{u,k})$, where $\Pi_{\mathcal{N}}$ is realized by performing Newton iterations as a nonlinear PDE solver initialized by $y_k^{l-1}$.\\ Setting $y:=y_k^l$ and $u:=u_k+\mu_k^l \delta_{u,k}$ compute the remaining quantities in $\phi_k^l$ according to \eqref{eq:stationary1} with $p=:p_k^l$ and $\lambda=:\lambda_k^l$. This yields $\phi_k^l$.			
				\item[(a2)] Increase $\beta_k$, if necessary, to get $\beta_k^l$. 
				\item[(a3)] Check the Armijo condition \eqref{eq:updating_condition}.\\  If it is satisfied, then set $l_k:=l$ and continue with step $(2)$; otherwise  update $\mu_k^{l+1}:=r \mu_k^l$, $l:=l+1$. \\ If $\mu_k^{l+1}<\epsilon$, then terminate the algorithm;  otherwise return to Step (a1).
			\end{itemize}
			\item[(2)] Set $\phi_{k+1}:={\phi}_k^{l_k}$, and $\beta_{k+1}:=\beta_k^{l_k}$, and $k:=k+1$.
		\end{itemize} 
		\item[$\bullet$] {Output:} The value of $\phi_k$ which contains both the control and state variables.
	\end{itemize}
	\caption{A semi-smooth Newton SQP algorithm for  PDE control problems}	\label{alg:SQP}
\end{algorithm}

In our examples, we choose $\mu_0=1$,  $\epsilon=10^{-5}$, $r=2/3$, and  $\beta_0=\norm{\lambda_0}_{L^2(\Omega)}+1$. 
In order to solve the nonsmooth system in \eqref{eq:SQP_optimal}, we employ a primal-dual active set strategy (pdAS), which was shown to be equivalent to an efficient SSN solver for classes of constrained optimization problems \cite{HinItoKun02}.  For the precise set-up of pdAS and the associated active/inactive set estimation we also refer to \cite{HinItoKun02}. For minimizing quadratic objectives subject to box constraints and utilizing highly accurate linear system solvers, pdAS is typically terminated when two consecutive active and inactive set estimates coincide. We recall here that the active set for \eqref{eq:SQP} at the solution $\delta_{u,k}$ is a subset $\mathcal{A}_k$ of $\Omega$ with $(u_k+\delta_{u,k})(x)\in[\underline{u}(x),\overline{u}(x)]$ for $x\in\mathcal{A}_k$; $\mathcal{I}_k:=\Omega\setminus\mathcal{A}_k$ denotes the associated inactive set.  Alternatively one may stop the iteration once the residual norm of the nonsmooth system at an iterate drops below a user specified tolerance.
In view of \eqref{eq:SQP_optimal} and constraint satisfaction, the function $\Psi_k(\mu)$ in \eqref{eq:merit} appears irrelevant as a penalty for violations of the box constraints. However, it becomes relevant when early stopping is employed in SSN (respectively pdAS). 
In this case we still need to guarantee that $\delta_{u,k}$ is a descent direction for our merit function to obtain sufficient decrease of $\Phi_k$ in our 
line search \eqref{eq:updating_condition}. This is needed for getting convergence of $(u_k)$ (along a subsequence) to a stationary point. For deriving a proper stopping rule for SSN to guarantee sufficient decrease, we multiply the first equation in \eqref{eq:SQP_optimal}
by the solution $\delta_{u}$, use $\lambda(u_k+\delta_{u}-\overline{u})(u_k+\delta_{u}-\underline{u})=0$ a.e. in $\Omega$ and  the feasibility of $u_k+\delta_{u}$, both according to the second line in \eqref{eq:SQP_optimal}.  We further set $\beta_k>\|\lambda\|_{U}$ (upon identifying $U^*\widehat{=}U$) to find
\[
\langle \mathcal{J}_{\mathcal{N}}^\prime(u_k), \delta_{u} \rangle_{U^*,U}+ \beta_k ( \underbrace{\Psi_k(1)}_{=0} - \Psi_k(0)) \leq -\langle H_k(u_k)\delta_{u},\delta_{u}\rangle_{U^*,U}<0,
\]
unless $\delta_{u}=0$, i.e., $u_k$ is stationary for the original reduced problem. Here, $\delta_u$ replaces $\delta_{u,k}$ in $\Psi_k(1)$. This motivates our termination rule for SSN when solving \eqref{eq:SQP_optimal}. In fact, let superscript $l$ denote the iteration index of SSN for the outer iteration $k$, i.e., for given $u_k$. For some initial guess $(\delta_u^0,\lambda^0)$ (typically chosen to be $(\delta_{u,k-1},\lambda_{k-1})$) SSN computes iterates $(\delta_u^{l},\lambda^{l})$, $l\in\mathbb{N}$, and terminates at iteration $l_k$, which is the smallest index with 
\begin{equation}\label{eq:stopping_rule}
\begin{aligned}
	&\langle \mathcal{J}_{\mathcal{N}}^\prime(u_k), \delta_{u}^{l_k}\rangle_{U^*,U} + \beta_k(\Psi_k(1)-\Psi_k(0))
	\leq - \xi  \langle H_k(u_k)\delta_{u}^{l_k},\delta_{u}^{l_k}\rangle_{U^*,U}\\\
	  &\text{and}\;\quad { \Psi_k(1)} \leq (1- \xi) \Psi_k(0) 
\end{aligned}	
\end{equation}
for some $\xi\in(0,1)$, with $\beta_k>\|\lambda^{l_k}\|_U$, and where $\delta_{u}^{l_k}$ is used in $\Psi_k(1)$.
In our tests, we choose $\xi=0.9$, and terminate SSN iterations whenever  \eqref{eq:stopping_rule} is satisfied or two consecutive active set estimates are identical. Then we set $\delta_{u,k}:=\delta^{l_k}_u$, $\lambda_k:=\lambda^{l_k}$, and determine a suitable step size $\mu_k$.

For the latter we use a backtracking line search based on the Armijo condition \cite{Pow76}. Indeed, given $u_k$, $\delta_{u,k}$, and $\lambda_k$, let $l$ now denote the running index of the line search iteration. Then $l_k\in\mathbb{N}$ is the smallest index such that 
\begin{equation}\label{eq:updating_condition}
	\Phi_k(\mu_k^{l_k})-\Phi_k(0)\leq \kappa \mu _k^{l_k}\left (\langle\mathcal{J}_{\mathcal{N}}^\prime(u_k), \delta_{u,k}\rangle_{U^*,U} + \beta_k(\Psi_k(1)-\Psi_k(0)) \right),
\end{equation}
for some parameter $0<\kappa<1$, and $\beta_k=\max\{\beta_{k-1}, \zeta\|\lambda_k\|_{U}\}>\|\lambda_k\|_{U}$, for some $\zeta>1$ in (a2). In our implementation we use $\kappa=10^{-3}$ and $\zeta=2$. 

Regarding the stopping criteria for the SQP iterations, we set a tolerance for the norm of the residual of \eqref{eq:stationary1} along with a maximal number of iterations. We note here that \eqref{eq:stationary1} matches \eqref{eq:SQP_optimal} upon introducing the adjoint state for efficiently computing $\mathcal{J}_{\mathcal{N}}'(u_k)$ to the latter. 

In our implementation we simplified the Newton derivative of the first-order system \eqref{eq:stationary1} by dropping the second-order derivatives $\partial_{yy}\mathcal{N}(\cdot, y_k)p_k$ from $\mathcal{G}_{\mathcal{N}}(\phi_k)$. The corresponding approximation reads
\[
\left(
\begin{array}{llll}
- \Delta   +\partial_y \mathcal{N} (\cdot ,y_k) & 0 &- \text{Id}  & 0\\
\text{Id} & -  \Delta +   \partial_y \mathcal{N}(\cdot ,y_k) & 0 & 0\\
0 & - \text{Id} & \alpha  \text{Id} &  \text{Id}\\
0& 0 & - cG_k &   \text{Id}-G_k
\end{array}  \right)\simeq\mathcal{G}_\mathcal{N}(\phi_k) .
\]
This helped to stabilize the SSN iterations, while maintaining almost the same convergence rates as for the exact Newton derivative in our tests.

\subsection{Numerical results on distributed optimal control of semilinear elliptic PDEs }
\label{subsec:monotone_example}
Our first test problem is given by
\begin{equation} \label{eq:example_op_pde}
	\left.\begin{aligned}
		&\text{minimize}\quad \frac{1}{2}\norm{y-g}_{L^{2}(\om)}^2+\frac{\alpha}{2} \norm{u}_{L^{2}(\om)}^2,\text{ over }(y,u)\in H^1(\Omega)\times L^2(\Omega),\\
		&\text{subject to}\quad -\Delta y+ f(x,y)=u \;\text{ in } \Omega :=(0,2)\times (0,2) ,\quad \partial_\nu y=0  \;\text{ on } \partial \Omega,\\
		&\phantom{\text{subject to}\quad}-20\leq u \leq 20.
	\end{aligned}\right\}  
\end{equation}
with exact underlying nonlinearity $ f(x,z) = z+ 5\cos^2(\pi x_1x_2) z^3$ and $x=(x_1,x_2)\in\mathbb{R}^2$, $z\in\mathbb{R}$.

\subsubsection{Training  of artificial neural networks}
For learning the function $f$
we use neural networks that are built from standard (multi-layer) feed-forward networks. 
Their respective architecture together with the loss function as well as the training data and method are specified next.

\paragraph{Loss function and training method}
Let $\Theta=(W,b)$ denote the parameters associated with an ANN $\mathcal{N}=:\mathcal{N}_\Theta$ that needs to be trained by solving an associated minimization problem; compare \eqref{mh.ann.min}. 
We use here the mean squared error 
\[\mathfrak{d}(\f x,\f f) = \frac{1}{n_D} \sum_{j=1}^{n_D}\abs{\mathcal{N}_{\Theta}(\f x_j) -\f f_j}^2, \]
as a loss function, no regularization, i.e, $\mathfrak{r}\equiv 0$, and $\mathcal{F}_{\text{ad}}$ is the full space. In this context, $(\f x_j, \f f_j )_{j=1}^{n_D}$ are the input-output training pairs. For simplicity of presentation we assume that $n_D$ is larger than the number of unknowns in $\Theta$.

For solving \eqref{mh.ann.min}, we adopt a Bayesian regularization method \cite{Mac92} which is based on a Levenberg-Marquardt (LM) algorithm, 
and is available in MATLAB packages. We initialized the LM algorithm by unitary random vectors using the Nguyen-Widrow method \cite{NguWid90}, and terminated it as soon as the Euclidean norm of the gradient of the loss function dropped below $10^{-7}$ or a maximum of $1000$ iterations was reached. For other methods that are suitable for this task we refer to the overview in \cite{BotCurNoc18}.

\paragraph{Architecture of the network}
In order to have a representative study of the influence of ANN architectures on our computational results, we used networks with a total number of hidden layers (HL) equal to 1, 3 or 5. In each choice, we further varied the number of neurons per layer such that the final number of unknowns in $\Theta$ (degree(s) of freedom; DoF) remained in essence the same. Such tests were performed for three different DoF (small, medium, large) resulting in a total of nine different architectures; cf.\ Table \ref{tab:net_arc_pde}. All underlying networks operate with input layer size of three neurons and one neuron in the output layer.
In all tests for this example, the log-sigmoid transfer function (\verb+logsig+ in MATLAB) was chosen as the activation function at all the hidden layers.

\begin{table}[h!]
	\begin{center}
		\renewcommand{\arraystretch}{1.0}
		\begin{tabular}{|l|l|l|l|l|l|l|}\hline
			& HL 1 & HL 2 & HL 3 &HL 4 & HL 5  & Total DoF  \\
			\hline
			&\multicolumn{6}{c|}{Small DoF}\\
			\hline
			No. of neurons & 30 &  - & - & - & - &  151 	\\
			No. of  neurons& 6 & 10 & 5& - & - &   155 	\\
			No. of  neurons & 3 & 5 & 10& 5 & 1 &   155  \\
			\hline
			&\multicolumn{6}{c|}{Medium DoF}\\
			\hline
			No. of  neurons & 60 &  - & - & - & - &   301  	\\
			No. of  neurons&10 & 12 & 10 & - & - &  313  	\\
			No. of  neurons & 5 & 8 & 10 & 8 & 6 &  307	\\
			\hline
			&\multicolumn{6}{c|}{Large DoF}\\
			\hline
			No. of  neurons & 120 &  - & - & - & - &   601   	\\
			No. of  neurons&15 & 18 & 13& - & - &   609  	\\
			No. of  neurons & 10 & 10 & 15 & 10 & 10 &  596 	\\
			\hline
		\end{tabular}\\[8pt]
		\caption{\label{tab:net_arc_pde} Architecture of networks. 
			HL $i$: $i$ hidden layers; DoF: degrees of freedom in $\Theta$.}	
	\end{center}
\end{table}

\paragraph{Training and validation data}

The training data rest on chosen control actions $(u_j)_{j=1}^{n_D}\subset \mathcal{C}_{ad}$ with
\begin{equation*}\begin{aligned}
		u^j= &-2d_{j}\pi^2\cos(\pi x_1)\cos(\pi x_2)\\
		&-d_{j}\cos(\pi x_1)\cos(\pi x_2)-5d_{j}^3\cos^2(\pi x_1x_2) \cos^3(\pi x_1)\cos^3(\pi x_2), 
	\end{aligned}
\end{equation*}
and $(d_j)=\set{[0.01:0.4:2.01]}$ (in MATLAB notation).
The procedure for generating the training data is as follows: First, numerical solutions are computed on a uniform discrete mesh $\om_{h}=\{x^{k}\}_{k=1}^{\bar N_h}$ (represented here by the associated mesh nodes including those on $\partial\Omega$) with mesh width $h=\frac{1}{50}$, and $\bar N_h=(n_h+1)^2$, $n_h=1/h$. 
The Laplace operator is discretized by the standard five-point finite difference stencil respecting the homogeneous Neumann boundary conditions. This yields the $N_h\times N_h$-matrix $\Delta_h$ related to nodes $x_k$ in $\Omega$ with $N_h=(n_h-1)^2$. The nonlinearity as well as the controls are evaluated at such mesh points $x^k$, and the resulting discrete nonlinear PDE \eqref{eq:example_op_pde} is solved by Newton's method. The Newton iteration is terminated once the PDE residual in the discrete $H^{-1}(\Omega)$-norm drops below $10^{-16}$, or a maximum of $30$ iterations is reached. Thus for each $u^j$, $j=1,\ldots, n_D$, we obtain numerical values $y_{h}^{j}=(y_{h,1}^j,\ldots,y_{h,N_h}^j)^\top$ associated with the (interior) mesh nodes $x^{k}$ and approximating $y^j(x^k)=-d_{j}\cos(\pi x_{1}^k)\cos(\pi x_{2}^k)$, the analytical PDE solution. Using these data we compute the output values of $f$ denoted by $f^j_h\in\mathbb{R}^{N_h}$ according to the PDE by
\[f(x^{k},y^{j}(x^{k}))\approx u^{j}(x^{k})+(\Delta_h y_{h}^{j})_k=:f_{h,k}^j,\quad k=1,\ldots, N_h, \quad j=1,\ldots, n_D.\]

These input-output pairs both are prepossessed using \verb+mapminmax+ function in MATLAB without change of notation here.
The training data are then obtained through subsampling $f_{h,k}^j$ by restriction to a coarse mesh $\Omega_H$, with $H>h$. For this purpose we use $H\in\{0.2,0.1,0.08\}$ giving rise to a small, medium and large training set, respectively. The corresponding reduction rates are 1/10, 1/5, and 1/4 with respect to the data for $h=1/50$.

This subsampled data set is then split into a training data set, a validation data set and a testing data set at the ratio of $8:1:1$. In our tests, such a data partitioning is done randomly by using MATLAB's \verb+randperm+ function.

\subsubsection{Numerical results}
We start by comparing the exact, numerical and learning-based solutions, respectively. 
The exact reference solution is chosen to be 
\[y^*=1.5 \cos(\pi x_1)\cos(\pi x_2),\]
and the numerical approximation $y_h$ resulted from a mesh with $h=2^{-7}$ and the use of the exact nonlinearity $f$. The same grid is used for obtaining the numerical approximation of $y_{\mathcal{N}}$. Note, however, that the grid for data generation is different from the grid for numerical computation. 

Our report on the experiments involves several discrete norms. In fact, for $z_h\in\mathbb{R}^{N_h}$ we have \[\abs{z_h}^2_{1}:=h^2(\Delta_hz_h)^\top z_h,\quad \norm{z_h}_{0}^2:=h^2z_h^\top z_h,\]
where $\abs{\cdot}_1$ and $\norm{\cdot}_0$  correspond to the $H^1$-seminorm and $L^2$-norm, respectively. 

\begin{table}[!ht]
	\begin{center}
		\resizebox{\textwidth}{!}{
			\begin{tabular}{ |c|ll|ll|ll|ll|}
				\hline
				&  $\abs{y_\mathcal{N}-y^*_h}_1$ &   $\abs{y_\mathcal{N}-y^*_h}_1$ &
				$\abs{y_\mathcal{N}-y^*}_1$  &   $\abs{y_\mathcal{N}-y^*}_1$  				&  $\norm{y_\mathcal{N}-y^*_h}_0$ &   $\norm{y_\mathcal{N}-y^*_h}_0$ &$\norm{y_\mathcal{N}-y^*}_0$  &   $\norm{y_\mathcal{N}-y^*}_0$  \\ \hline
				&  min &   max &   min &   max &   min &   max & min &   max   \\  \hline 				
				1-L & $0.2506 $ & $0.6532   $  &  $ 0.2868   $ & $ 0.6713  $ & $0.0752  $  &  $ 0.2422  $  & $0.0808   $  &  $0.2435   $   \\ \hline
				3-L & $ 0.2575  $ & $0.7537   $  &  $ 0.2391   $ & $0.7777   $ & $ 0.0817  $ & $0.2524       $ & $0.0791   $  &  $0.2565   $  \\ \hline
				5-L & $0.2157  $ & $36.2640 $  &  $ 0.2235 $ & $ 36.2731  $ & $0.0539   $  &  $29.4926   $   & $ 0.0544 $  &  $ 29.4936  $  \\ \hline	
				&  mean&   deviation &  mean &   deviation	&  mean&   deviation &  mean&   deviation  \\  \hline 
				1-L & $0.4276  $ & $0.1099  $  &  $ 0.4496  $ & $ 0.1075 $ & $0.1472 $  &  $ 0.0484 $  & $0.1506  $  &  $0.0485  $   \\ \hline
				3-L & $ 0.3853 $ & $0.1350  $  &  $ 0.4003  $ & $0.1687  $ & $ 0.1425 $ & $0.0462      $ & $0.1268  $  &  $0.0482  $  \\ \hline
				5-L & $3.0242 $ & $ 8.9087 $  &  $3.0287  $ & $8.9103  $ & $ 2.1309 $  &  $7.3143   $   & $ 2.1299 $  &  $ 7.3149 $  \\ \hline
		\end{tabular}}
		{\small \caption{\label{tab:layer_comparison}Statistics on learning-informed PDEs with different layers in neural networks using small size training data, small DoF in $\Theta$, and 15 samples in total.}}
	\end{center}
\end{table} 

Table \ref{tab:layer_comparison} depicts the approximation results for different ANN architectures with small DoF as described in \ref{tab:net_arc_pde} and in all cases the small training data set.

We find that the $1$-layer network is robust in terms of the statistical quantities shown, and the $3$-layer network has the smallest errors on average, but exhibits a larger deviation than the $1$-layer network. The $5$-layer network yields the smallest error, but also the largest ones with a very big deviation. 
This behavior may be attributed to the fact that deeper networks give rise to increasingly more nonlinear compositions entering the loss function. This may be stabilized by tuned initializations, additional regularization, or sufficient training data. A study along these lines, however, is not within the scope of the present work as noted earlier.

\begin{table}[!ht]
	\begin{center}
		\resizebox{\textwidth}{!}{
			\begin{tabular}{ |c|ll|ll|ll|ll|}
				\hline
				&  $\abs{y_\mathcal{N}-y^*_h}_1$ &   $\abs{y_\mathcal{N}-y^*_h}_1$ &
				$\abs{y_\mathcal{N}-y^*}_1$  &   $\abs{y_\mathcal{N}-y^*}_1$  				&  $\norm{y_\mathcal{N}-y^*_h}_0$ &   $\norm{y_\mathcal{N}-y^*_h}_0$ &$\norm{y_\mathcal{N}-y^*}_0$  &   $\norm{y_\mathcal{N}-y^*}_0$  \\ \hline
				&  min &   max &   min &   max &   min &   max & min &   max   \\  \hline 				
				3-L S & $0.0546 $ & $0.1658   $  &  $ 0.0889   $ & $ 0.2211  $ & $0.0086  $  &  $ 0.0546  $  & $0.0207   $  &  $0.0515   $   \\ \hline
				3-L M & $ 0.0090  $ & $0.1508   $  &  $ 0.0876   $ & $0.2039   $ & $ 0.0026  $ & $0.0492       $ & $0.0168   $  &  $0.0591   $  \\ \hline
				3-L L & $0.0155  $ & $0.2815 $  &  $ 0.0833 $ & $ 0.3306  $ & $0.0036   $  &  $0.0901   $   & $ 0.0161 $  &  $ 0.0996  $  \\ \hline	
				&  mean&   deviation &  mean &   deviation	&  mean&   deviation &  mean&   deviation  \\  \hline 
				3-L S& $0.1103  $ & $0.0357  $  &  $ 0.1464  $ & $ 0.0329 $ & $0.0266 $  &  $ 0.0125 $  & $0.0339  $  &  $0.0095  $   \\ \hline
				3-L M& $ 0.0631 $ & $0.0407  $  &  $ 0.1113  $ & $0.0367  $ & $ 0.0170 $ & $0.0120      $ & $0.0250  $  &  $0.0117  $  \\ \hline
				3-L L & $ 0.0559 $ & $ 0.0626 $  &  $0.1115  $ & $0.0609  $ & $ 0.0149 $  &  $0.0205   $   & $ 0.0250 $  &  $ 0.0204 $  \\ \hline
		\end{tabular}}
		\caption{\label{tab:width_comparison}Statistics on learning-informed PDEs with different numbers of neurons in networks using medium size training data of 15 samples in total.}
		
	\end{center}
\end{table} 

In Table \ref{tab:width_comparison}, we provide statistics on the influence of the number of neurons for fixed layers. We use $3$-layer networks and medium sized training data for this set of experiments. All three levels of DoF for the networks as given in Table \ref{tab:net_arc_pde} are studied. The results in terms of 'mean' and 'deviation' indicate that a large number of neurons gives typically better approximations when compared to the smaller size of DoFs.
However, we also observe that the deviation and the maximum error increases with the number of DoF. 
This can be attributed to an increase in training error for increasing DoFs.

Next we present some computational results where we  use the learning-informed PDE as constraint when numerically solving the optimal control  problem \eqref{eq:cost_NN}. 
Here we consider a target function $g=y^*+\delta$ where $\delta$ is a variable denoting zero-mean Gaussian noise of standard deviation $\hat\sigma$, for different values of $\hat\sigma$.
For convenience of comparison, we take $y^*$ to be the solution from the last set of experiments.
We denote by $u_\mathcal{N}$ and $\bar{u}$   the optimal controls
with respect to the learning-informed PDE constraint and the original PDE constraint, respectively, both computed by the semi-smooth Newton algorithm as described in Section \ref{subsec:Newton} with a fixed number of $30$ iterations which turns out to be sufficient for this example, 
as the sum of all residual norms of the first-order system \eqref{eq:stationary1} is less than $10^{-10}$.
As before,  $y_\mathcal{N}$ and $\bar{y}$ are the states corresponding to  $u_\mathcal{N}$ and $\bar{u}$, respectively.

\begin{table}[!ht]
	\begin{center}
		\resizebox{\textwidth}{!}{
			\begin{tabular}{ |c|lll|lll|lll|} \hline
				&\multicolumn{3}{c|}{Small DoF} &\multicolumn{3}{c|}{Medium  DoF}  
				&\multicolumn{3}{c|}{Large DoF}  \\  \hline 
				&  $\norm{u_\mathcal{N}-\bar{u}}_0$ & $\norm{y_\mathcal{N}- \bar{y}}_0$ &  $ \abs{y_\mathcal{N}- \bar{y}}_1$ &  $\norm{u_\mathcal{N}-\bar{u}}_0$ & $\norm{y_\mathcal{N}- \bar{y}}_0$ &  $ \abs{y_\mathcal{N}- \bar{y}}_1$ &  $\norm{u_\mathcal{N}-\bar{u}}_0$ & $\norm{y_\mathcal{N}- \bar{y}}_0$ &  $ \abs{y_\mathcal{N}- \bar{y}}_1$   \\ \hline
				& \multicolumn{9}{c|}{Small size of training data }    \\ \hline
				1-L & $0.5578 $ & $0.0330 $  &  $0.1609  $ & $0.3055 $ & $0.0283 $  &  $0.1423 $ &  $ 0.2548$ & $0.0194 $  &  $ 0.1143 $    \\ \hline
				3-L  & $ 0.3426$ & $ 0.0274$  &  $0.1246  $ & $ 0.3597$ & $0.0343 $  &  $0.1777  $ & ${\bf 0.3932 }$ & ${\bf 0.0354 }$  &  $ {\bf0.1722 } $  \\ \hline
				5-L  & $ 0.3888 $ & $ 0.0183 $  &  $  0.1041$ & $ 0.1771$ & $0.0117 $  &  $ 0.0666 $ & $ 0.3986$ & $0.0359 $  &  $ 0.1698$  \\ \hline
				& \multicolumn{9}{c|}{Medium size of training data }    \\ \hline				
				1-L  & $0.2145 $ & $0.0071 $  &  $  0.0413$ & $ 0.1153$ & $0.0072 $  &  $0.0587 $ & $ 0.0655$ & $0.0029 $  &  $  0.0244$  \\ \hline
				3-L  & $0.1647 $ & $ 0.0069$  &  $0.0419  $ & $ 0.0985$ & $0.0082 $  &  $ 0.0423 $ & ${\bf 0.0623} $ & ${\bf 0.0046 }$  &  ${\bf0.0287 }  $   \\ \hline
				5-L  & $0.2971 $ & $0.0271 $  &  $ 0.1223 $ & $0.0325$ & $ 0.0014$  &  $0.0081  $ & $ 0.0736$ & $0.0064 $  &  $0.0414  $   \\ \hline
				& \multicolumn{9}{c|}{Large size of training data }    \\ \hline				
				1-L  & $0.1417 $ & $0.0089 $  &  $ 0.0481 $ & $ 0.0920$ & $0.0040 $  &  $0.0266  $ & $0.0447 $ & $0.0009 $  &  $ 0.0055 $   \\ \hline
				3-L  & $0.0566 $ & $0.0020 $  &  $ 0.0126 $ & $ 0.0467$ & $0.0024 $  &  $0.0122  $ & ${\bf 0.0076 }$ & ${\bf 0.0004} $  &  ${\bf  0.0020 }$  \\ \hline
				5-L  & $ 0.1239$ & $0.0070 $  &  $ 0.0435$ & $0.2135$ & $0.0098 $  &  $ 0.0645 $ & $0.0192 $ & $0.0018 $  &  $ 0.0115 $   \\ \hline\addlinespace[5pt] 
				\multicolumn{10}{c}{ Using the same noisy data $g$ (Gaussian noise of mean zero and deviation $0.1$) with $\alpha=0.001$ in all the tests}  \\  
		\end{tabular}}
		\caption{Optimal control with learning-informed PDEs using different layers, different size of networks, and a variety of training data.}		\label{tab:result_op_control}
	\end{center}
\end{table} 

In general, we observe in Table \ref{tab:result_op_control} that most combinations give similar results. This shows the robustness of our proposed method with respect to a wide range of network architectures.
Here, the presented errors are just computed from one specific initialization. 

Note that when using $3$-hidden-layer networks with large DoF, we observe a clear increase in the levels of accuracy for both the control and state variables as the training data increase from small to large size. These are highlighted with bold font numbers in Table \ref{tab:result_op_control}.
A similar behavior occurs for $1$-hidden-layer and $5$-hidden-layer networks.
By fixing the  $3$-hidden-layer networks, and for each case of DoFs provided in Table \ref{tab:result_op_control}, we are next interested in exploring how the noise level $\sigma$ and the cost parameter $\alpha$ further influence the optimal control approximation.
\begin{table}[!ht]
	
	\begin{center}
		\resizebox{\textwidth}{!}{
			\begin{tabular}{ |c|lll|lll|lll|}
				\hline 
				&\multicolumn{3}{c|}{Noise free} &\multicolumn{3}{c|}{Mild noise $\hat\sigma=0.05$}  
				&\multicolumn{3}{c|}{Larger noise $\hat\sigma=0.5$}  \\  \hline 
				&  $\norm{u_\mathcal{N}-\bar{u}}_0$ & $\norm{y_\mathcal{N}- \bar{y}}_0$ &  $ \abs{y_\mathcal{N}- \bar{y}}_1$ &  $\norm{u_\mathcal{N}-\bar{u}}_0$ & $\norm{y_\mathcal{N}- \bar{y}}_0$ &  $ \abs{y_\mathcal{N}- \bar{y}}_1$ &  $\norm{u_\mathcal{N}-\bar{u}}_0$ & $\norm{y_\mathcal{N}- \bar{y}}_0$ &  $ \abs{y_\mathcal{N}- \bar{y}}_1$   \\ \hline
				& \multicolumn{9}{c|}{$\alpha =0.00001$}    \\ \hline
				3-L-S NN & $1.9523 $ & $0.0210 $  &  $0.2041  $ & $1.9518$ & $0.0210 $  &  $0.2043$ &  $ 2.1480$ & $0.0213 $  &  $ 0.2085 $    \\ \hline
				3-L-M  NN & $ 0.1187$ & $ 0.0018$  &  $0.0253  $ & $ 0.1190$ & $0.0018 $  &  $0.0253  $ & $0.1264 $ & $0.0018 $  &  $ 0.0254 $  \\ \hline
				3-L-L  NN & $ 0.0213$ & $0.0004 $  &  $0.0046 $ & $ 0.0215$ & $0.0004 $  &  $ 0.0046 $ & $ 0.0258$ & $0.0004 $  &  $ 0.0047$  \\ \hline
				& \multicolumn{9}{c|}{$\alpha=0.0001$}    \\ \hline				
				3-L-S  NN & $1.3489 $ & $0.0395 $  &  $  0.2695$ & $ 1.3560$ & $0.0397 $  &  $0.2705 $ & $ 1.4181$ & $0.0410 $  &  $  0.2796$  \\ \hline
				3-L-M  NN& $0.1361 $ & $ 0.0032$  &  $0.0314  $ & $ 0.1357$ & $0.0032 $  &  $ 0.0314 $ & $0.1384 $ & $ 0.0032$  &  $0.0315 $   \\ \hline
				3-L-L  NN  & $0.0137$ & $0.0005  $  &  $ 0.0039  $ & $0.0136  $ & $ 0.0005 $  &  $0.0039   $ & $ 0.0136 $ & $0.0005  $  &  $0.0039   $   \\ \hline
				& \multicolumn{9}{c|}{$\alpha=0.001$ }    \\ \hline				
				3-L-S  NN & $0.3903  $ & $0.0350  $  &  $ 0.1706  $ & $ 0.3917 $ & $0.0352  $  &  $0.1714   $ & $0.4067  $ & $0.0371  $  &  $ 0.1792  $   \\ \hline
				3-L-M  NN & $0.0628 $ & $0.0046  $  &  $ 0.0286  $ & $ 0.0630$ & $0.0046 $  &  $0.0286  $ & $0.0671  $ & $0.0046  $  &  $ 0.0293  $  \\ \hline
				3-L-L  NN & $ 0.0076$ & $0.0004 $  &  $ 0.0020$ & $0.0076$ & $0.0004 $  &  $ 0.0020 $ & $0.0080 $ & $0.0004 $  &  $ 0.0021 $   \\ \hline
				& \multicolumn{9}{c|}{$\alpha=0.01$ }    \\ \hline				
				3-L-S  NN & $0.0570  $ & $0.0066  $  &  $ 0.0209 $ & $ 0.0572$ & $0.0066 $  &  $0.0210   $ & $0.0592  $ & $0.0069  $  &  $ 0.0217  $   \\ \hline
				3-L-M  NN & $0.0271  $ & $0.0020  $  &  $ 0.0080  $ & $ 0.0271$ & $0.0021 $  &  $0.0081  $ & $0.0277 $ & $0.0022 $  &  $ 0.0083 $  \\ \hline
				3-L-L  NN & $ 0.0035$ & $0.0003 $  &  $ 0.0008$ & $0.0035$ & $0.0003 $  &  $ 0.0008 $ & $0.0035 $ & $0.0003 $  &  $ 0.0008 $   \\ \hline\addlinespace[5pt] 
				\multicolumn{10}{c}{ Variant level of noise in $g$ with  respect to different $\alpha$ and coarser to finer neural networks}  \\
		\end{tabular}}
		\caption{Optimal control on learning-informed PDEs with networks by 3 layers networks, but different sizes on the neurons (DoF), and a variant amount of training data.}	
		\label{tab:result_op_control_2}	
	\end{center}
\end{table} 

From Table \ref{tab:result_op_control_2} we draw several interesting conclusions. In both, the noisy and noise free case, we have that the error $\norm{u_\mathcal{N}-\bar{u}}$ is proportional to the accuracy of the neural network approximation, and inverse proportional to $\sqrt{\alpha}$.
This verifies the results of Theorem \ref{thm:error_bound} and Theorem \ref{thm:error_bound2}, respectively.
The dependence on $\alpha$ could only be proved for the noise-free case in Theorem \ref{thm:error_bound}.
Therefore the convergence rates provided by our tests here seem to indicate that better convergence rates or more relaxed assumptions appear plausible.

\subsection{Numerical results on optimal control of stationary Allen-Cahn equation}
Next we study the optimal control of the Allen-Cahn equation, which involves a nonmonotone $f$ and reads
\begin{equation} \label{eq:Allen_Cahn}
	-\Delta y+ \frac{1}{\eta}(y^3-y) =u\quad \text{ in }\; \Omega ,\quad \partial_\nu y=0 \quad \text{ on }\; \partial \Omega,
\end{equation}
with $\eta>0$. 
In our numerical tests, we set $\eta=0.004$, use $\Omega=(0,2)^2$, and $h:=2^{-7}$. 

We focus on  $3$-hidden-layer neural networks with $10$, $12$ and $10$ neurons per layer yielding DoF$=293$. In each hidden layer we use log-sigmoid transfer functions.
Note also that since the input data here does not depend explicitly on the spatial variable $x$, i.e., $f=f(y)$, both the input and output layers have only one neuron, respectively. This is different to the previous test examples.

In our tests, we obtained the training data by solving the PDE in \eqref{eq:Allen_Cahn} with 
\[ u=u^d:=\left\{\begin{aligned}
1000,  & \quad x\in \Omega^l:=(0,2)\times (0,1),\\
-1000, & \quad x\in \Omega/\Omega^l.
\end{aligned} \right. \]
In order to train  the neural networks described above, the solution of the PDE is subsampled uniformly at a rate of $0.25$, that is $H=0.08$.
As $f$ has an one dimensional image space, it suffices that the data $u^d$ correspond to a PDE solution that has a relatively wide range of values. Indeed, using our choice of $u^{d}$, the value of the corresponding solution $y$ varies between $-2.5$ and $2.5$ which turns out to be sufficient for learning $f$.

\begin{figure}[h!]
	\begin{minipage}[t]{0.32\textwidth}
		\centering
		\resizebox{0.95\textwidth}{!}{
%
%
\begin{tikzpicture}

\begin{axis}[%
width=3.014in,
height=3.509in,
at={(0.506in,0.474in)},
scale only axis,
xmin=-10,
xmax=10,
xtick={-10, -5, 0, 5, 10},
xticklabels={$-10$, $-5$, $0$, $5$, $10$},
xticklabel style={font=\large},  
ymin=-100000,
ymax=700000,
yticklabel style={font=\large},  
axis background/.style={fill=white},
legend style={at={(0.5,1)}, anchor=north, legend cell align=left, align=left, draw=white!15!black}
]
\addplot [color=red,  line width=2.0pt]
  table[row sep=crcr]{%
-10	60815.0538626492\\
-9.9	59969.1384306213\\
-9.8	59123.2363046311\\
-9.7	58277.3486469942\\
-9.6	57431.4767280262\\
-9.5	56585.6219369774\\
-9.4	55739.7857942025\\
-9.3	54893.9699647212\\
-9.2	54048.1762733497\\
-9.1	53202.4067216058\\
-9	52356.6635066249\\
-8.9	51510.9490423509\\
-8.8	50665.2659833144\\
-8.7	49819.6172513482\\
-8.6	48974.0060656489\\
-8.5	48128.4359766532\\
-8.4	47282.9109042685\\
-8.3	46437.4351810828\\
-8.2	45592.0136012759\\
-8.1	44746.6514760656\\
-8	43901.3546966597\\
-7.9	43056.1298058361\\
-7.8	42210.9840794565\\
-7.7	41365.9256194349\\
-7.6	40520.9634599287\\
-7.5	39676.1076888183\\
-7.4	38831.3695868855\\
-7.3	37986.7617875116\\
-7.2	37142.298460198\\
-7.1	36297.9955217808\\
-7	35453.8708798914\\
-6.9	34609.9447140112\\
-6.8	33766.2398004259\\
-6.7	32922.781888514\\
-6.6	32079.6001371609\\
-6.5	31236.7276216998\\
-6.4	30394.201923719\\
-6.3	29552.0658183876\\
-6.2	28710.3680767425\\
-6.1	27869.164403726\\
-6	27028.5185368106\\
-5.9	26188.5035349284\\
-5.8	25349.2032933274\\
-5.7	24510.7143271346\\
-5.6	23673.1478750695\\
-5.5	22836.6323852609\\
-5.4	22001.3164578397\\
-5.3	21167.3723343553\\
-5.2	20335.0000425639\\
-5.1	19504.4323272698\\
-5	18675.9405240951\\
-4.9	17849.84156359\\
-4.8	17026.5063278082\\
-4.7	16206.3696194111\\
-4.6	15389.9420420367\\
-4.5	14577.8241249773\\
-4.4	13770.723045565\\
-4.3	12969.4722923076\\
-4.2	12175.0545426697\\
-4.1	11388.6278566916\\
-4	10611.5549435078\\
-3.9	9845.434646216\\
-3.8	9092.13378906791\\
-3.7	8353.81601167698\\
-3.6	7632.96210495571\\
-3.5	6932.37377195627\\
-3.4	6255.15016198004\\
-3.3	5604.62509205839\\
-3.2	4984.25439614118\\
-3.1	4397.44937825128\\
-3	3847.36482556128\\
-2.9	3336.66635233565\\
-2.8	2867.31585976671\\
-2.7	2440.41751724179\\
-2.6	2056.15513197939\\
-2.5	1713.82834241481\\
-2.4	1411.97013178942\\
-2.3	1148.51245158886\\
-2.2	920.964627577319\\
-2.1	726.57756044398\\
-2	562.478947373949\\
-1.9	425.775415104477\\
-1.8	313.624234454112\\
-1.7	223.280342748477\\
-1.6	152.12496129906\\
-1.5	97.6813974673215\\
-1.4	57.6224975558198\\
-1.3	29.7730936109491\\
-1.2	12.1098234374516\\
-1.1	2.75992566427831\\
-1	-0\\
-0.9	2.25524973415308\\
-0.799999999999999	8.09936950577194\\
-0.699999999999999	16.2549953171682\\
-0.6	25.5944888851179\\
-0.5	35.140776627324\\
-0.399999999999999	44.0679911912547\\
-0.299999999999999	51.7017491376024\\
-0.199999999999999	57.519014306861\\
-0.0999999999999996	61.1476119464469\\
0	62.3655462164512\\
0.0999999999999996	61.1003141871449\\
0.199999999999999	57.4283963853072\\
0.299999999999999	51.5750443259968\\
0.399999999999999	43.9143974470667\\
0.5	34.9698698548795\\
0.6	25.414675720769\\
0.699999999999999	16.0723295574924\\
0.799999999999999	7.91697231649416\\
0.9	2.07343279259055\\
1	-0.182978144361531\\
1.1	2.57315240579525\\
1.2	11.9169546570685\\
1.3	29.5730877983845\\
1.4	57.4159093225666\\
1.5	97.4700252781052\\
1.6	151.910975157978\\
1.7	223.065357489606\\
1.8	313.409010337217\\
1.9	425.560841979336\\
2	562.268488062866\\
2.1	726.380165582999\\
2.2	920.795154065395\\
2.3	1148.38398805874\\
2.4	1411.87011566248\\
2.5	1713.66950072255\\
2.6	2055.69520237169\\
2.7	2439.14994387314\\
2.8	2864.34626540127\\
2.9	3330.60178469007\\
3	3836.24734758569\\
3.1	4378.7575708525\\
3.2	4954.97776373639\\
3.3	5561.39585515328\\
3.4	6194.40392107654\\
3.5	6850.50910332515\\
3.6	7526.4767946373\\
3.7	8219.40869100988\\
3.8	8926.76940949113\\
3.9	9646.3783574337\\
4	10376.3815644243\\
4.1	11115.2143245931\\
4.2	11861.5616588597\\
4.3	12614.3205981361\\
4.4	13372.5662359719\\
4.5	14135.5222366186\\
4.6	14902.5357797381\\
4.7	15673.0565743085\\
4.8	16446.6194358013\\
4.9	17222.829897206\\
5	18001.3523581403\\
5.1	18781.9003339782\\
5.2	19564.2284308126\\
5.3	20348.1257332424\\
5.4	21133.410346562\\
5.5	21919.924881748\\
5.6	22707.5327108304\\
5.7	23496.1148525478\\
5.8	24285.5673745826\\
5.9	25075.7992200934\\
6	25866.7303835964\\
6.1	26658.2903752415\\
6.2	27450.4169238262\\
6.3	28243.0548780041\\
6.4	29036.155272515\\
6.5	29829.6745322216\\
6.6	30623.5737915701\\
6.7	31417.8183110143\\
6.8	32212.3769751367\\
6.9	33007.2218598059\\
7	33802.3278578415\\
7.1	34597.6723544018\\
7.2	35393.2349447519\\
7.3	36188.997188249\\
7.4	36984.942393365\\
7.5	37781.0554293755\\
7.6	38577.3225610203\\
7.7	39373.731303001\\
7.8	40170.2702916541\\
7.9	40966.9291715271\\
8	41763.6984949203\\
8.1	42560.5696327307\\
8.2	43357.5346951741\\
8.3	44154.5864611558\\
8.4	44951.7183152331\\
8.5	45748.9241912526\\
8.6	46546.1985218708\\
8.7	47343.5361932679\\
8.8	48140.9325044576\\
8.9	48938.3831306689\\
9	49735.8840903439\\
9.1	50533.4317153533\\
9.2	51331.0226240769\\
9.3	52128.6536970432\\
9.4	52926.322054856\\
9.5	53724.0250381687\\
9.6	54521.7601894956\\
9.7	55319.5252366729\\
9.8	56117.3180778024\\
9.9	56915.1367675327\\
10	57712.9795045452\\
};
\addlegendentry{\large{NN approximation of $F$}}

\addplot [color=blue,  line width=2.0pt]
  table[row sep=crcr]{%
-10	612562.5\\
-9.9	588183.75625\\
-9.8	564537.6\\
-9.7	541609.25625\\
-9.6	519384.1\\
-9.5	497847.65625\\
-9.4	476985.6\\
-9.3	456783.75625\\
-9.2	437228.1\\
-9.1	418304.75625\\
-9	400000\\
-8.9	382300.25625\\
-8.8	365192.1\\
-8.7	348662.25625\\
-8.6	332697.6\\
-8.5	317285.15625\\
-8.4	302412.1\\
-8.3	288065.75625\\
-8.2	274233.6\\
-8.1	260903.25625\\
-8	248062.5\\
-7.9	235699.25625\\
-7.8	223801.6\\
-7.7	212357.75625\\
-7.6	201356.1\\
-7.5	190785.15625\\
-7.4	180633.6\\
-7.3	170890.25625\\
-7.2	161544.1\\
-7.1	152584.25625\\
-7	144000\\
-6.9	135780.75625\\
-6.8	127916.1\\
-6.7	120395.75625\\
-6.6	113209.6\\
-6.5	106347.65625\\
-6.4	99800.1\\
-6.3	93557.25625\\
-6.2	87609.5999999999\\
-6.1	81947.75625\\
-6	76562.5\\
-5.9	71444.75625\\
-5.8	66585.6\\
-5.7	61976.25625\\
-5.6	57608.1\\
-5.5	53472.65625\\
-5.4	49561.6\\
-5.3	45866.75625\\
-5.2	42380.1\\
-5.1	39093.75625\\
-5	36000\\
-4.9	33091.25625\\
-4.8	30360.1\\
-4.7	27799.25625\\
-4.6	25401.6\\
-4.5	23160.15625\\
-4.4	21068.1\\
-4.3	19118.75625\\
-4.2	17305.6\\
-4.1	15622.25625\\
-4	14062.5\\
-3.9	12620.25625\\
-3.8	11289.6\\
-3.7	10064.75625\\
-3.6	8940.09999999999\\
-3.5	7910.15625\\
-3.4	6969.59999999999\\
-3.3	6113.25625\\
-3.2	5336.09999999999\\
-3.1	4633.25625\\
-3	4000\\
-2.9	3431.75625\\
-2.8	2924.1\\
-2.7	2472.75625\\
-2.6	2073.6\\
-2.5	1722.65625\\
-2.4	1416.1\\
-2.3	1150.25625\\
-2.2	921.599999999999\\
-2.1	726.756249999999\\
-2	562.5\\
-1.9	425.75625\\
-1.8	313.599999999999\\
-1.7	223.256249999999\\
-1.6	152.1\\
-1.5	97.65625\\
-1.4	57.6000000000001\\
-1.3	29.7562499999998\\
-1.2	12.0999999999999\\
-1.1	2.75624999999998\\
-1	0\\
-0.9	2.25624999999998\\
-0.799999999999999	8.10000000000008\\
-0.699999999999999	16.2562500000001\\
-0.6	25.6\\
-0.5	35.15625\\
-0.399999999999999	44.1000000000001\\
-0.299999999999999	51.7562500000001\\
-0.199999999999999	57.6\\
-0.0999999999999996	61.25625\\
0	62.5\\
0.0999999999999996	61.25625\\
0.199999999999999	57.6\\
0.299999999999999	51.7562500000001\\
0.399999999999999	44.1000000000001\\
0.5	35.15625\\
0.6	25.6\\
0.699999999999999	16.2562500000001\\
0.799999999999999	8.10000000000008\\
0.9	2.25624999999998\\
1	-4.32986979603811e-15\\
1.1	2.75624999999998\\
1.2	12.0999999999999\\
1.3	29.7562499999998\\
1.4	57.6000000000001\\
1.5	97.65625\\
1.6	152.1\\
1.7	223.256249999999\\
1.8	313.599999999999\\
1.9	425.75625\\
2	562.5\\
2.1	726.756249999999\\
2.2	921.599999999998\\
2.3	1150.25625\\
2.4	1416.1\\
2.5	1722.65625\\
2.6	2073.6\\
2.7	2472.75625\\
2.8	2924.1\\
2.9	3431.75625\\
3	4000\\
3.1	4633.25625\\
3.2	5336.09999999999\\
3.3	6113.25625\\
3.4	6969.6\\
3.5	7910.15625\\
3.6	8940.1\\
3.7	10064.75625\\
3.8	11289.6\\
3.9	12620.25625\\
4	14062.5\\
4.1	15622.25625\\
4.2	17305.6\\
4.3	19118.75625\\
4.4	21068.1\\
4.5	23160.15625\\
4.6	25401.6\\
4.7	27799.25625\\
4.8	30360.1\\
4.9	33091.25625\\
5	36000\\
5.1	39093.75625\\
5.2	42380.1\\
5.3	45866.75625\\
5.4	49561.6\\
5.5	53472.65625\\
5.6	57608.1\\
5.7	61976.25625\\
5.8	66585.6\\
5.9	71444.75625\\
6	76562.5\\
6.1	81947.75625\\
6.2	87609.6\\
6.3	93557.25625\\
6.4	99800.1\\
6.5	106347.65625\\
6.6	113209.6\\
6.7	120395.75625\\
6.8	127916.1\\
6.9	135780.75625\\
7	144000\\
7.1	152584.25625\\
7.2	161544.1\\
7.3	170890.25625\\
7.4	180633.6\\
7.5	190785.15625\\
7.6	201356.1\\
7.7	212357.75625\\
7.8	223801.6\\
7.9	235699.25625\\
8	248062.5\\
8.1	260903.25625\\
8.2	274233.6\\
8.3	288065.75625\\
8.4	302412.1\\
8.5	317285.15625\\
8.6	332697.6\\
8.7	348662.25625\\
8.8	365192.1\\
8.9	382300.25625\\
9	400000\\
9.1	418304.75625\\
9.2	437228.1\\
9.3	456783.75625\\
9.4	476985.6\\
9.5	497847.65625\\
9.6	519384.1\\
9.7	541609.25625\\
9.8	564537.6\\
9.9	588183.75625\\
10	612562.5\\
};
\addlegendentry{\large{original $F$}}

\end{axis}
\end{tikzpicture}
	\end{minipage}
	\begin{minipage}[t]{0.32\textwidth}
		\centering
		\resizebox{0.95\textwidth}{!}{
%
%
\begin{tikzpicture}

\begin{axis}[%
width=3.014in,
height=3.509in,
at={(0.506in,0.464in)},
scale only axis,
xmin=-10,
xmax=10,
xtick={-10, -5, 0, 5, 10},
xticklabels={$-10$, $-5$, $0$, $5$, $10$},
xticklabel style={font=\large},  
ymin=-250000,
ymax=250000,
ytick={-300000, -200000, -100000, 0 , 100000, 200000, 300000},
yticklabel style={font=\large},  
axis background/.style={fill=white},
legend style={at={(0.5,1)}, anchor=north,  legend cell align=left, align=left, draw=white!15!black}
]
\addplot [color=red,  line width=2.0pt]
  table[row sep=crcr]{%
-10	-8459.21722606949\\
-9.9	-8459.08964237354\\
-9.8	-8458.95094187516\\
-9.7	-8458.80009548894\\
-9.6	-8458.6359704784\\
-9.5	-8458.45731882896\\
-9.4	-8458.26276415338\\
-9.3	-8458.05078692356\\
-9.2	-8457.81970779059\\
-9.1	-8457.56766871867\\
-9	-8457.29261161517\\
-8.9	-8456.99225408986\\
-8.8	-8456.66406191775\\
-8.7	-8456.30521771224\\
-8.6	-8455.91258523598\\
-8.5	-8455.48266868367\\
-8.4	-8455.01156616259\\
-8.3	-8454.49491646854\\
-8.2	-8453.92783810544\\
-8.1	-8453.30485932015\\
-8	-8452.61983771713\\
-7.9	-8451.86586777251\\
-7.8	-8451.03517427916\\
-7.7	-8450.11898941285\\
-7.6	-8449.10741070604\\
-7.5	-8447.98923673766\\
-7.4	-8446.75177677852\\
-7.3	-8445.38062995706\\
-7.2	-8443.85942870471\\
-7.1	-8442.16954028007\\
-7	-8440.28971902263\\
-6.9	-8438.19570061146\\
-6.8	-8435.85972795324\\
-6.7	-8433.24999633959\\
-6.6	-8430.33000312139\\
-6.5	-8427.05778426089\\
-6.4	-8423.38501662968\\
-6.3	-8419.25596068812\\
-6.2	-8414.60621304422\\
-6.1	-8409.36123214364\\
-6	-8403.43459273994\\
-5.9	-8396.72591553422\\
-5.8	-8389.11840709376\\
-5.7	-8380.4759314314\\
-5.6	-8370.63951795557\\
-5.5	-8359.4231903311\\
-5.4	-8346.60897654708\\
-5.3	-8331.94093163713\\
-5.2	-8315.11797069945\\
-5.1	-8295.78527122266\\
-5	-8273.5239612632\\
-4.9	-8247.83876647347\\
-4.8	-8218.1432501411\\
-4.7	-8183.74225732302\\
-4.6	-8143.81118668568\\
-4.5	-8097.37179588913\\
-4.4	-8043.26445473869\\
-4.3	-7980.11718498214\\
-4.2	-7906.31260427062\\
-4.1	-7819.95522298469\\
-4	-7718.84368796191\\
-3.9	-7600.45581824151\\
-3.8	-7461.95883992452\\
-3.7	-7300.26294074148\\
-3.6	-7112.14207681958\\
-3.5	-6894.44914322981\\
-3.4	-6644.44804112427\\
-3.3	-6360.2656935213\\
-3.2	-6041.42679005095\\
-3.1	-5689.37584600601\\
-3	-5307.83568410626\\
-2.9	-4902.83748699056\\
-2.8	-4482.32146043075\\
-2.7	-4055.34553594006\\
-2.6	-3631.0898750893\\
-2.5	-3217.91943315134\\
-2.4	-2822.72191784278\\
-2.3	-2450.61051965263\\
-2.2	-2104.95041022724\\
-2.1	-1787.5950469436\\
-2	-1499.211309497\\
-1.9	-1239.60490059309\\
-1.8	-1007.99845198704\\
-1.7	-803.246540800441\\
-1.6	-623.990072523214\\
-1.5	-468.760145525154\\
-1.4	-336.043065149903\\
-1.3	-224.317060218823\\
-1.2	-132.069393733549\\
-1.1	-57.8007658431135\\
-1	-0.0223923004984954\\
-0.9	42.7501305794619\\
-0.799999999999999	72.0031871509598\\
-0.699999999999999	89.2297858692643\\
-0.6	95.9315185140597\\
-0.5	93.6177432638003\\
-0.399999999999999	83.8026000552718\\
-0.299999999999999	68.0009167479182\\
-0.199999999999999	47.7242139526475\\
-0.0999999999999996	24.4778645866106\\
0	-0.239929508873502\\
0.0999999999999996	-24.9372073336101\\
0.199999999999999	-48.1257505049325\\
0.299999999999999	-68.3177926943068\\
0.399999999999999	-84.0228797024746\\
0.5	-93.7457356329016\\
0.6	-95.9856794887697\\
0.699999999999999	-89.2376858493715\\
0.799999999999999	-71.9947077320634\\
0.9	-42.750497070056\\
1	-0.00196431354152082\\
1.1	57.7498279559986\\
1.2	132.00062189536\\
1.3	224.245873204802\\
1.4	335.984770976462\\
1.5	468.723394661255\\
1.6	623.973514780597\\
1.7	803.24146750081\\
1.8	1007.9986938879\\
1.9	1239.62184065409\\
2	1499.28614274327\\
2.1	1787.79262905111\\
2.2	2105.31097992702\\
2.3	2451.03165556712\\
2.4	2822.75061452232\\
2.5	3216.46106699756\\
2.6	3626.08868735166\\
2.7	4043.54548321185\\
2.8	4459.24254568037\\
2.9	4863.06349425789\\
3	5245.5996519786\\
3.1	5599.30027127164\\
3.2	5919.20555885739\\
3.3	6203.10752055545\\
3.4	6451.20860417287\\
3.5	6665.49138004576\\
3.6	6849.02791988298\\
3.7	7005.38608120622\\
3.8	7138.19891935524\\
3.9	7250.89647821759\\
4	7346.56649938685\\
4.1	7427.90340155018\\
4.2	7497.210674012\\
4.3	7556.43151427169\\
4.4	7607.19151586555\\
4.5	7650.84394837812\\
4.6	7688.51265329796\\
4.7	7721.13031683763\\
4.8	7749.47143270039\\
4.9	7774.18007767295\\
5	7795.79299671772\\
5.1	7814.75862610915\\
5.2	7831.45268953848\\
5.3	7846.19094963971\\
5.4	7859.23962250503\\
5.5	7870.82388430213\\
5.6	7881.13482602327\\
5.7	7890.3351482686\\
5.8	7898.56383360426\\
5.9	7905.93998891928\\
6	7912.56601326247\\
6.1	7918.53021665014\\
6.2	7923.90899111848\\
6.3	7928.76861580369\\
6.4	7933.16676217133\\
6.5	7937.15375293912\\
6.6	7940.77361813494\\
6.7	7944.06498361096\\
6.8	7947.06182079792\\
6.9	7949.79408121413\\
7	7952.28823498788\\
7.1	7954.56772920701\\
7.2	7956.65337911604\\
7.3	7958.56370290944\\
7.4	7960.31520901877\\
7.5	7961.92264327892\\
7.6	7963.39920211984\\
7.7	7964.75671691309\\
7.8	7966.00581376484\\
7.9	7967.15605235543\\
8	7968.21604685386\\
8.1	7969.19357146008\\
8.2	7970.09565273367\\
8.3	7970.92865053755\\
8.4	7971.69832915014\\
8.5	7972.40991986853\\
8.6	7973.06817623122\\
8.7	7973.67742282561\\
8.8	7974.24159850754\\
8.9	7974.76429474361\\
9	7975.24878968787\\
9.1	7975.69807852064\\
9.2	7976.11490050549\\
9.3	7976.50176315935\\
9.4	7976.86096387852\\
9.5	7977.19460931846\\
9.6	7977.50463278714\\
9.7	7977.79280987815\\
9.8	7978.06077254195\\
9.9	7978.31002176839\\
10	7978.54193903292\\
};
\addlegendentry{\large{NN approximation of $f$}}

\addplot [color=blue,  line width=2.0pt]
  table[row sep=crcr]{%
-10	-247500\\
-9.9	-240099.75\\
-9.8	-232848\\
-9.7	-225743.25\\
-9.6	-218784\\
-9.5	-211968.75\\
-9.4	-205296\\
-9.3	-198764.25\\
-9.2	-192372\\
-9.1	-186117.75\\
-9	-180000\\
-8.9	-174017.25\\
-8.8	-168168\\
-8.7	-162450.75\\
-8.6	-156864\\
-8.5	-151406.25\\
-8.4	-146076\\
-8.3	-140871.75\\
-8.2	-135792\\
-8.1	-130835.25\\
-8	-126000\\
-7.9	-121284.75\\
-7.8	-116688\\
-7.7	-112208.25\\
-7.6	-107844\\
-7.5	-103593.75\\
-7.4	-99456\\
-7.3	-95429.25\\
-7.2	-91512\\
-7.1	-87702.75\\
-7	-84000\\
-6.9	-80402.25\\
-6.8	-76908\\
-6.7	-73515.75\\
-6.6	-70224\\
-6.5	-67031.25\\
-6.4	-63936\\
-6.3	-60936.75\\
-6.2	-58032\\
-6.1	-55220.25\\
-6	-52500\\
-5.9	-49869.75\\
-5.8	-47328\\
-5.7	-44873.25\\
-5.6	-42504\\
-5.5	-40218.75\\
-5.4	-38016\\
-5.3	-35894.25\\
-5.2	-33852\\
-5.1	-31887.75\\
-5	-30000\\
-4.9	-28187.25\\
-4.8	-26448\\
-4.7	-24780.75\\
-4.6	-23184\\
-4.5	-21656.25\\
-4.4	-20196\\
-4.3	-18801.75\\
-4.2	-17472\\
-4.1	-16205.25\\
-4	-15000\\
-3.9	-13854.75\\
-3.8	-12768\\
-3.7	-11738.25\\
-3.6	-10764\\
-3.5	-9843.75\\
-3.4	-8976\\
-3.3	-8159.25\\
-3.2	-7392\\
-3.1	-6672.75\\
-3	-6000\\
-2.9	-5372.25\\
-2.8	-4788\\
-2.7	-4245.75\\
-2.6	-3744\\
-2.5	-3281.25\\
-2.4	-2856\\
-2.3	-2466.75\\
-2.2	-2112\\
-2.1	-1790.25\\
-2	-1500\\
-1.9	-1239.75\\
-1.8	-1008\\
-1.7	-803.249999999999\\
-1.6	-623.999999999999\\
-1.5	-468.75\\
-1.4	-336\\
-1.3	-224.249999999999\\
-1.2	-131.999999999999\\
-1.1	-57.7499999999997\\
-1	0\\
-0.9	42.7499999999999\\
-0.799999999999999	72.0000000000003\\
-0.699999999999999	89.2500000000001\\
-0.6	96\\
-0.5	93.75\\
-0.399999999999999	83.9999999999998\\
-0.299999999999999	68.2499999999998\\
-0.199999999999999	47.9999999999998\\
-0.0999999999999996	24.7499999999999\\
0	0\\
0.0999999999999996	-24.7499999999999\\
0.199999999999999	-47.9999999999998\\
0.299999999999999	-68.2499999999998\\
0.399999999999999	-83.9999999999998\\
0.5	-93.75\\
0.6	-96\\
0.699999999999999	-89.2500000000001\\
0.799999999999999	-72.0000000000003\\
0.9	-42.7499999999999\\
1	0\\
1.1	57.7499999999997\\
1.2	131.999999999999\\
1.3	224.249999999999\\
1.4	336\\
1.5	468.75\\
1.6	623.999999999999\\
1.7	803.249999999999\\
1.8	1008\\
1.9	1239.75\\
2	1500\\
2.1	1790.25\\
2.2	2112\\
2.3	2466.75\\
2.4	2856\\
2.5	3281.25\\
2.6	3744\\
2.7	4245.75\\
2.8	4788\\
2.9	5372.25\\
3	6000\\
3.1	6672.75\\
3.2	7392\\
3.3	8159.25\\
3.4	8976\\
3.5	9843.75\\
3.6	10764\\
3.7	11738.25\\
3.8	12768\\
3.9	13854.75\\
4	15000\\
4.1	16205.25\\
4.2	17472\\
4.3	18801.75\\
4.4	20196\\
4.5	21656.25\\
4.6	23184\\
4.7	24780.75\\
4.8	26448\\
4.9	28187.25\\
5	30000\\
5.1	31887.75\\
5.2	33852\\
5.3	35894.25\\
5.4	38016\\
5.5	40218.75\\
5.6	42504\\
5.7	44873.25\\
5.8	47328\\
5.9	49869.75\\
6	52500\\
6.1	55220.25\\
6.2	58032\\
6.3	60936.75\\
6.4	63936\\
6.5	67031.25\\
6.6	70224\\
6.7	73515.75\\
6.8	76908\\
6.9	80402.25\\
7	84000\\
7.1	87702.75\\
7.2	91512\\
7.3	95429.25\\
7.4	99456\\
7.5	103593.75\\
7.6	107844\\
7.7	112208.25\\
7.8	116688\\
7.9	121284.75\\
8	126000\\
8.1	130835.25\\
8.2	135792\\
8.3	140871.75\\
8.4	146076\\
8.5	151406.25\\
8.6	156864\\
8.7	162450.75\\
8.8	168168\\
8.9	174017.25\\
9	180000\\
9.1	186117.75\\
9.2	192372\\
9.3	198764.25\\
9.4	205296\\
9.5	211968.75\\
9.6	218784\\
9.7	225743.25\\
9.8	232848\\
9.9	240099.75\\
10	247500\\
};
\addlegendentry{\large{original $f$}}

\end{axis}
\end{tikzpicture}
	\end{minipage}
	\begin{minipage}[t]{0.32\textwidth}
		\centering
		\resizebox{0.95\textwidth}{!}{
%
%
\begin{tikzpicture}

\begin{axis}[%
width=3.014in,
height=3.509in,
at={(0.506in,0.474in)},
scale only axis,
xmin=-10,
xmax=10,
xtick={-10, -5, 0, 5, 10},
xticklabels={$-10$, $-5$, $0$, $5$, $10$},
xticklabel style={font=\large},  
ymin=-10000,
ymax=80000,
yticklabel style={font=\large},  
axis background/.style={fill=white},
legend style={at={(0.5,1)}, anchor=north, legend cell align=left, align=left, draw=white!15!black}
]
\addplot [color=red,  line width=2.0pt]
  table[row sep=crcr]{%
-10	1.2234451731344\\
-9.9	1.32978683666778\\
-9.8	1.44593655385553\\
-9.7	1.5728771311359\\
-9.6	1.71170092636094\\
-9.5	1.86362358545595\\
-9.4	2.02999969951514\\
-9.3	2.21234067504816\\
-9.2	2.41233515765301\\
-9.1	2.63187240506622\\
-9	2.87306907080115\\
-8.9	3.13829993612631\\
-8.8	3.43023321803465\\
-8.7	3.75187118654169\\
-8.6	4.10659694903122\\
-8.5	4.49822840594987\\
-8.4	4.93108055507105\\
-8.3	5.41003752584978\\
-8.2	5.94063596703392\\
-8.1	6.52916169696352\\
-8	7.18276186558831\\
-7.9	7.90957528071029\\
-7.8	8.71888403112685\\
-7.7	9.62129011170659\\
-7.6	10.6289214388786\\
-7.5	11.7556724625868\\
-7.4	13.0174855605993\\
-7.3	14.4326805776275\\
-7.2	16.0223412873284\\
-7.1	17.8107692618992\\
-7	19.8260176957299\\
-6.9	22.1005202254457\\
-6.8	24.6718328162352\\
-6.7	27.5835104642453\\
-6.6	30.8861449460679\\
-6.5	34.6385953129952\\
-6.4	38.9094495066522\\
-6.3	43.7787636422938\\
-6.2	49.3401355064634\\
-6.1	55.7031810578724\\
-6	62.9964976943536\\
-5.9	71.3712163260834\\
-5.8	81.0052665195991\\
-5.7	92.1085058315262\\
-5.6	104.928896581617\\
-5.5	119.759951177306\\
-5.4	136.949710672124\\
-5.3	156.911569493308\\
-5.2	180.137309276239\\
-5.1	207.212750103824\\
-5	238.836455729246\\
-4.9	275.84191783085\\
-4.8	319.223553175702\\
-4.7	370.1666091721\\
-4.6	430.080576291024\\
-4.5	500.634772132513\\
-4.4	583.793116852897\\
-4.3	681.842360043952\\
-4.2	797.403596147027\\
-4.1	933.410167230246\\
-4	1093.02544411817\\
-3.9	1279.46158746334\\
-3.8	1495.64712601029\\
-3.7	1743.68286334987\\
-3.6	2024.03477346575\\
-3.5	2334.46020025406\\
-3.4	2668.77502704694\\
-3.3	3015.75487454552\\
-3.2	3358.67968107256\\
-3.1	3676.14225550475\\
-3	3944.54163682165\\
-2.9	4142.05193713371\\
-2.8	4252.9993367121\\
-2.7	4271.05710361169\\
-2.6	4200.00206537027\\
-2.5	4051.87896474274\\
-2.4	3843.55677563165\\
-2.3	3593.10498992256\\
-2.2	3317.06442329341\\
-2.1	3028.98669469759\\
-2	2739.05855162855\\
-1.9	2454.40097508428\\
-1.8	2179.66250167018\\
-1.7	1917.6579031227\\
-1.6	1669.93021407084\\
-1.5	1437.1970634612\\
-1.4	1219.6835181472\\
-1.3	1017.35841545119\\
-1.2	830.093163097587\\
-1.1	657.759608432808\\
-1	500.280762292009\\
-0.9	357.646206032537\\
-0.799999999999999	229.902944783265\\
-0.699999999999999	117.131834202504\\
-0.6	19.4189395321652\\
-0.5	-63.1701656707436\\
-0.399999999999999	-130.607293395617\\
-0.299999999999999	-182.906682012008\\
-0.199999999999999	-220.118245236307\\
-0.0999999999999996	-242.311903789763\\
0	-249.557812219415\\
0.0999999999999996	-241.907952242638\\
0.199999999999999	-219.383845252335\\
0.299999999999999	-181.973277377438\\
0.399999999999999	-129.636435856025\\
0.5	-62.3193615000002\\
0.6	20.0292662095357\\
0.699999999999999	117.442845461844\\
0.799999999999999	229.927546226164\\
0.9	357.46199682379\\
1	500.006728897462\\
1.1	657.521516370709\\
1.2	829.986073083809\\
1.3	1017.41699442646\\
1.4	1219.87116377174\\
1.5	1437.4225851618\\
1.6	1670.09500212373\\
1.7	1917.72604714685\\
1.8	2179.73018238964\\
1.9	2454.71980412504\\
2	2739.94391796101\\
2.1	3030.52704766569\\
2.2	3318.56166074338\\
2.3	3592.24981402292\\
2.4	3835.49998893711\\
2.5	4028.58001318128\\
2.6	4150.40730806133\\
2.7	4182.57990660266\\
2.8	4114.28585318896\\
2.9	3946.25162490904\\
3	3691.76089182357\\
3.1	3373.96043253805\\
3.2	3020.52787920396\\
3.3	2657.95962558112\\
3.4	2307.48523039751\\
3.5	1983.39081330709\\
3.6	1693.35801026359\\
3.7	1439.90411066251\\
3.8	1222.09159132921\\
3.9	1037.01348349326\\
4	880.876644502672\\
4.1	749.695065665577\\
4.2	639.681715483317\\
4.3	547.436754035122\\
4.4	470.011529137903\\
4.5	404.903797939822\\
4.6	350.01928825877\\
4.7	303.62018883853\\
4.8	264.271757641083\\
4.9	230.79256155472\\
5	202.210627725472\\
5.1	177.726053093252\\
5.2	156.679760989511\\
5.3	138.527725192075\\
5.4	122.819873512762\\
5.5	109.182903904508\\
5.6	97.3063240077893\\
5.7	86.931121790572\\
5.8	77.8405713966971\\
5.9	69.8527659096307\\
6	62.8145444047408\\
6.1	56.5965441245774\\
6.2	51.0891608627876\\
6.3	46.1992431566805\\
6.4	41.8473802174255\\
6.5	37.9656711128145\\
6.6	34.4958848207577\\
6.7	31.3879384547706\\
6.8	28.5986351017142\\
6.9	26.0906140193313\\
7	23.8314749901457\\
7.1	21.7930458783438\\
7.2	19.9507682532742\\
7.3	18.2831806184106\\
7.4	16.7714825492122\\
7.5	15.3991660808625\\
7.6	14.1517031431699\\
7.7	13.0162798306313\\
7.8	11.9815699129183\\
7.9	11.0375413080459\\
8	10.1752903156339\\
8.1	9.38689928755018\\
8.2	8.66531413514426\\
8.3	8.00423866606106\\
8.4	7.39804323327256\\
8.5	6.84168558371237\\
8.6	6.33064212935761\\
8.7	5.86084814226379\\
8.8	5.4286456071348\\
8.9	5.03073765872989\\
9	4.66414869351651\\
9.1	4.32618938091111\\
9.2	4.01442591370972\\
9.3	3.72665293357152\\
9.4	3.46086964866233\\
9.5	3.21525872931144\\
9.6	2.98816762578578\\
9.7	2.77809200179777\\
9.8	2.58366101945315\\
9.9	2.40362424727502\\
10	2.23683999360404\\
};
\addlegendentry{\large{NN approximation of $f'$}}

\addplot [color=blue,  line width=2.0pt]
  table[row sep=crcr]{%
-10	74750\\
-9.9	73257.5\\
-9.8	71780\\
-9.7	70317.5\\
-9.6	68870\\
-9.5	67437.5\\
-9.4	66020\\
-9.3	64617.5\\
-9.2	63230\\
-9.1	61857.5\\
-9	60500\\
-8.9	59157.5\\
-8.8	57830\\
-8.7	56517.5\\
-8.6	55220\\
-8.5	53937.5\\
-8.4	52670\\
-8.3	51417.5\\
-8.2	50180\\
-8.1	48957.5\\
-8	47750\\
-7.9	46557.5\\
-7.8	45380\\
-7.7	44217.5\\
-7.6	43070\\
-7.5	41937.5\\
-7.4	40820\\
-7.3	39717.5\\
-7.2	38630\\
-7.1	37557.5\\
-7	36500\\
-6.9	35457.5\\
-6.8	34430\\
-6.7	33417.5\\
-6.6	32420\\
-6.5	31437.5\\
-6.4	30470\\
-6.3	29517.5\\
-6.2	28580\\
-6.1	27657.5\\
-6	26750\\
-5.9	25857.5\\
-5.8	24980\\
-5.7	24117.5\\
-5.6	23270\\
-5.5	22437.5\\
-5.4	21620\\
-5.3	20817.5\\
-5.2	20030\\
-5.1	19257.5\\
-5	18500\\
-4.9	17757.5\\
-4.8	17030\\
-4.7	16317.5\\
-4.6	15620\\
-4.5	14937.5\\
-4.4	14270\\
-4.3	13617.5\\
-4.2	12980\\
-4.1	12357.5\\
-4	11750\\
-3.9	11157.5\\
-3.8	10580\\
-3.7	10017.5\\
-3.6	9470\\
-3.5	8937.5\\
-3.4	8420\\
-3.3	7917.5\\
-3.2	7430\\
-3.1	6957.5\\
-3	6500\\
-2.9	6057.5\\
-2.8	5630\\
-2.7	5217.5\\
-2.6	4820\\
-2.5	4437.5\\
-2.4	4070\\
-2.3	3717.5\\
-2.2	3380\\
-2.1	3057.5\\
-2	2750\\
-1.9	2457.5\\
-1.8	2180\\
-1.7	1917.5\\
-1.6	1670\\
-1.5	1437.5\\
-1.4	1220\\
-1.3	1017.5\\
-1.2	829.999999999999\\
-1.1	657.5\\
-1	500\\
-0.9	357.500000000001\\
-0.799999999999999	229.999999999999\\
-0.699999999999999	117.499999999999\\
-0.6	19.9999999999997\\
-0.5	-62.5\\
-0.399999999999999	-130.000000000001\\
-0.299999999999999	-182.5\\
-0.199999999999999	-220\\
-0.0999999999999996	-242.5\\
0	-250\\
0.0999999999999996	-242.5\\
0.199999999999999	-220\\
0.299999999999999	-182.5\\
0.399999999999999	-130.000000000001\\
0.5	-62.5\\
0.6	19.9999999999997\\
0.699999999999999	117.499999999999\\
0.799999999999999	229.999999999999\\
0.9	357.500000000001\\
1	500\\
1.1	657.5\\
1.2	829.999999999999\\
1.3	1017.5\\
1.4	1220\\
1.5	1437.5\\
1.6	1670\\
1.7	1917.5\\
1.8	2180\\
1.9	2457.5\\
2	2750\\
2.1	3057.5\\
2.2	3380\\
2.3	3717.5\\
2.4	4070\\
2.5	4437.5\\
2.6	4820\\
2.7	5217.5\\
2.8	5630\\
2.9	6057.5\\
3	6500\\
3.1	6957.5\\
3.2	7430\\
3.3	7917.5\\
3.4	8420\\
3.5	8937.5\\
3.6	9470\\
3.7	10017.5\\
3.8	10580\\
3.9	11157.5\\
4	11750\\
4.1	12357.5\\
4.2	12980\\
4.3	13617.5\\
4.4	14270\\
4.5	14937.5\\
4.6	15620\\
4.7	16317.5\\
4.8	17030\\
4.9	17757.5\\
5	18500\\
5.1	19257.5\\
5.2	20030\\
5.3	20817.5\\
5.4	21620\\
5.5	22437.5\\
5.6	23270\\
5.7	24117.5\\
5.8	24980\\
5.9	25857.5\\
6	26750\\
6.1	27657.5\\
6.2	28580\\
6.3	29517.5\\
6.4	30470\\
6.5	31437.5\\
6.6	32420\\
6.7	33417.5\\
6.8	34430\\
6.9	35457.5\\
7	36500\\
7.1	37557.5\\
7.2	38630\\
7.3	39717.5\\
7.4	40820\\
7.5	41937.5\\
7.6	43070\\
7.7	44217.5\\
7.8	45380\\
7.9	46557.5\\
8	47750\\
8.1	48957.5\\
8.2	50180\\
8.3	51417.5\\
8.4	52670\\
8.5	53937.5\\
8.6	55220\\
8.7	56517.5\\
8.8	57830\\
8.9	59157.5\\
9	60500\\
9.1	61857.5\\
9.2	63230\\
9.3	64617.5\\
9.4	66020\\
9.5	67437.5\\
9.6	68870\\
9.7	70317.5\\
9.8	71780\\
9.9	73257.5\\
10	74750\\
};
\addlegendentry{\large{original $f'$}}

\end{axis}
\end{tikzpicture}
	\end{minipage}

	\begin{minipage}[t]{0.32\textwidth}
		\centering
		\resizebox{0.95\textwidth}{!}{
%
%
\begin{tikzpicture}

\begin{axis}[%
width=3.746in,
height=3.09in,
at={(0.628in,0.417in)},
scale only axis,
xmin=-2,
xmax=2,
xtick={-2, -1, 0, 1, 2},
xticklabels={$-2$, $-1$, $0$, $1$, $2$},
xticklabel style={font=\large},  
ymin=-100,
ymax=600,
yticklabel style={font=\large},  
axis background/.style={fill=white},
legend style={at={(0.5,1)}, anchor=north, legend cell align=left, align=left, draw=white!15!black}
]
\addplot [color=red, mark=asterisk, mark options={solid, red}, mark size=3.5pt, line width=1.3pt]
  table[row sep=crcr]{%
-2	562.5\\
-1.9	425.75625\\
-1.8	313.6\\
-1.7	223.25625\\
-1.6	152.1\\
-1.5	97.65625\\
-1.4	57.6\\
-1.3	29.75625\\
-1.2	12.1\\
-1.1	2.75625\\
-1	0\\
-0.9	2.25625\\
-0.8	8.10000000000001\\
-0.7	16.25625\\
-0.6	25.6\\
-0.5	35.15625\\
-0.4	44.1\\
-0.3	51.75625\\
-0.2	57.6\\
-0.0999999999999999	61.25625\\
0	62.5\\
0.0999999999999999	61.25625\\
0.2	57.6\\
0.3	51.75625\\
0.4	44.1\\
0.5	35.15625\\
0.6	25.6\\
0.7	16.25625\\
0.8	8.10000000000001\\
0.9	2.25625000000001\\
1	-4.32986979603811e-15\\
1.1	2.75625\\
1.2	12.1\\
1.3	29.75625\\
1.4	57.6\\
1.5	97.65625\\
1.6	152.1\\
1.7	223.25625\\
1.8	313.6\\
1.9	425.75625\\
2	562.5\\
};
\addlegendentry{\large{NN approximation of $F$}}

\addplot [color=blue, line width=2.0pt]
  table[row sep=crcr]{%
-2	562.478947373949\\
-1.9	425.775415104476\\
-1.8	313.624234454113\\
-1.7	223.280342748477\\
-1.6	152.12496129906\\
-1.5	97.6813974673215\\
-1.4	57.6224975558196\\
-1.3	29.7730936109493\\
-1.2	12.1098234374516\\
-1.1	2.75992566427835\\
-1	-0\\
-0.9	2.25524973415309\\
-0.8	8.09936950577189\\
-0.7	16.2549953171681\\
-0.6	25.5944888851178\\
-0.5	35.140776627324\\
-0.4	44.0679911912546\\
-0.3	51.7017491376023\\
-0.2	57.5190143068611\\
-0.0999999999999999	61.147611946447\\
0	62.3655462164512\\
0.0999999999999999	61.1003141871449\\
0.2	57.4283963853071\\
0.3	51.5750443259968\\
0.4	43.9143974470664\\
0.5	34.9698698548795\\
0.6	25.414675720769\\
0.7	16.0723295574923\\
0.8	7.91697231649404\\
0.9	2.07343279259019\\
1	-0.182978144361531\\
1.1	2.57315240579511\\
1.2	11.9169546570685\\
1.3	29.573087798385\\
1.4	57.4159093225662\\
1.5	97.4700252781052\\
1.6	151.910975157978\\
1.7	223.065357489606\\
1.8	313.409010337218\\
1.9	425.560841979336\\
2	562.268488062866\\
};
\addlegendentry{\large{original $F$}}

\end{axis}
\end{tikzpicture}
	\end{minipage}
	\begin{minipage}[t]{0.32\textwidth}
		\centering
		\resizebox{0.95\textwidth}{!}{
%
%
\begin{tikzpicture}

\begin{axis}[%
width=3.767in,
height=3.22in,
at={(0.628in,0.417in)},
scale only axis,
xmin=-2,
xmax=2,
xtick={-2, -1, 0, 1, 2},
xticklabels={$-2$, $-1$, $0$, $1$, $2$},
xticklabel style={font=\large},  
ymin=-1500,
ymax=1500,
yticklabel style={font=\large},  
axis background/.style={fill=white},
legend style={at={(0.5,1)}, anchor=north, legend cell align=left, align=left, draw=white!15!black}
]
\addplot [color=red, mark=asterisk, mark options={solid, red}, mark size=3.5pt, line width=1.3pt]
  table[row sep=crcr]{%
-2	-1499.211309497\\
-1.9	-1239.60490059309\\
-1.8	-1007.99845198704\\
-1.7	-803.246540800446\\
-1.6	-623.990072523217\\
-1.5	-468.760145525154\\
-1.4	-336.043065149904\\
-1.3	-224.317060218824\\
-1.2	-132.06939373355\\
-1.1	-57.8007658431139\\
-1	-0.0223923004984954\\
-0.9	42.750130579461\\
-0.8	72.0031871509584\\
-0.7	89.2297858692639\\
-0.6	95.9315185140579\\
-0.5	93.6177432638003\\
-0.4	83.8026000552745\\
-0.3	68.0009167479182\\
-0.2	47.7242139526475\\
-0.0999999999999999	24.4778645866106\\
0	-0.239929508873502\\
0.0999999999999999	-24.9372073336101\\
0.2	-48.1257505049329\\
0.3	-68.3177926943064\\
0.4	-84.0228797024711\\
0.5	-93.7457356329016\\
0.6	-95.9856794887697\\
0.7	-89.2376858493715\\
0.8	-71.9947077320634\\
0.9	-42.7504970700555\\
1	-0.00196431354152082\\
1.1	57.7498279559991\\
1.2	132.000621895359\\
1.3	224.245873204804\\
1.4	335.984770976462\\
1.5	468.723394661255\\
1.6	623.973514780597\\
1.7	803.241467500811\\
1.8	1007.9986938879\\
1.9	1239.62184065409\\
2	1499.28614274326\\
};
\addlegendentry{\large{NN approximation of $f$}}

\addplot [color=blue,  line width=2.0pt]
  table[row sep=crcr]{%
-2	-1500\\
-1.9	-1239.75\\
-1.8	-1008\\
-1.7	-803.25\\
-1.6	-624\\
-1.5	-468.75\\
-1.4	-336\\
-1.3	-224.25\\
-1.2	-132\\
-1.1	-57.7500000000001\\
-1	0\\
-0.9	42.75\\
-0.8	72\\
-0.7	89.25\\
-0.6	96\\
-0.5	93.75\\
-0.4	84\\
-0.3	68.25\\
-0.2	48\\
-0.0999999999999999	24.75\\
0	0\\
0.0999999999999999	-24.75\\
0.2	-48\\
0.3	-68.25\\
0.4	-84\\
0.5	-93.75\\
0.6	-96\\
0.7	-89.25\\
0.8	-72\\
0.9	-42.75\\
1	0\\
1.1	57.7500000000001\\
1.2	132\\
1.3	224.25\\
1.4	336\\
1.5	468.75\\
1.6	624\\
1.7	803.25\\
1.8	1008\\
1.9	1239.75\\
2	1500\\
};
\addlegendentry{\large{original $f$}}

\end{axis}
\end{tikzpicture}
	\end{minipage}
	\begin{minipage}[t]{0.32\textwidth}
		\centering
		\resizebox{0.95\textwidth}{!}{
%
%
\begin{tikzpicture}

\begin{axis}[%
width=3.767in,
height=3.09in,
at={(0.632in,0.417in)},
scale only axis,
xmin=-2,
xmax=2,
xtick={-2, -1, 0, 1, 2},
xticklabels={$-2$, $-1$, $0$, $1$, $2$},
xticklabel style={font=\large},  
ymin=-500,
ymax=3000,
yticklabel style={font=\large},  
axis background/.style={fill=white},
legend style={at={(0.5,1)}, anchor=north, legend cell align=left, align=left, draw=white!15!black}
]
\addplot [color=red, mark=asterisk, mark options={solid, red}, mark size=3.5pt, line width=1.3pt]
  table[row sep=crcr]{%
-2	2739.05855162855\\
-1.9	2454.40097508428\\
-1.8	2179.66250167019\\
-1.7	1917.6579031227\\
-1.6	1669.93021407084\\
-1.5	1437.1970634612\\
-1.4	1219.6835181472\\
-1.3	1017.35841545119\\
-1.2	830.093163097588\\
-1.1	657.759608432809\\
-1	500.280762292009\\
-0.9	357.646206032536\\
-0.8	229.902944783266\\
-0.7	117.131834202504\\
-0.6	19.4189395321655\\
-0.5	-63.1701656707436\\
-0.4	-130.607293395616\\
-0.3	-182.906682012007\\
-0.2	-220.118245236307\\
-0.0999999999999999	-242.311903789764\\
0	-249.557812219415\\
0.0999999999999999	-241.907952242638\\
0.2	-219.383845252335\\
0.3	-181.973277377437\\
0.4	-129.636435856023\\
0.5	-62.3193615000002\\
0.6	20.0292662095356\\
0.7	117.442845461845\\
0.8	229.927546226165\\
0.9	357.461996823789\\
1	500.006728897462\\
1.1	657.521516370709\\
1.2	829.98607308381\\
1.3	1017.41699442646\\
1.4	1219.87116377174\\
1.5	1437.4225851618\\
1.6	1670.09500212374\\
1.7	1917.72604714685\\
1.8	2179.73018238964\\
1.9	2454.71980412504\\
2	2739.94391796101\\
};
\addlegendentry{\large{NN approximation of $f'$}}

\addplot [color=blue,  line width=2.0pt]
  table[row sep=crcr]{%
-2	2750\\
-1.9	2457.5\\
-1.8	2180\\
-1.7	1917.5\\
-1.6	1670\\
-1.5	1437.5\\
-1.4	1220\\
-1.3	1017.5\\
-1.2	830\\
-1.1	657.5\\
-1	500\\
-0.9	357.5\\
-0.8	230\\
-0.7	117.5\\
-0.6	19.9999999999999\\
-0.5	-62.5\\
-0.4	-130\\
-0.3	-182.5\\
-0.2	-220\\
-0.0999999999999999	-242.5\\
0	-250\\
0.0999999999999999	-242.5\\
0.2	-220\\
0.3	-182.5\\
0.4	-130\\
0.5	-62.5\\
0.6	19.9999999999999\\
0.7	117.5\\
0.8	230\\
0.9	357.5\\
1	500\\
1.1	657.5\\
1.2	830\\
1.3	1017.5\\
1.4	1220\\
1.5	1437.5\\
1.6	1670\\
1.7	1917.5\\
1.8	2180\\
1.9	2457.5\\
2	2750\\
};
\addlegendentry{\large{original $f'$}}

\end{axis}
\end{tikzpicture}
	\end{minipage}

	\caption{Functions $F$,  $f$ and its first order derivative $f'$ along with the corresponding approximations learned from a neural network. We note that the range of the learning-informed function is influenced by the training data. The second row of images shows that the functions are well-approximated by their neural network counterparts in the ranges where the training data cover well, which here is around the interval $[-2,2].$}
	\label{fig:non_monotone_f}
\end{figure}
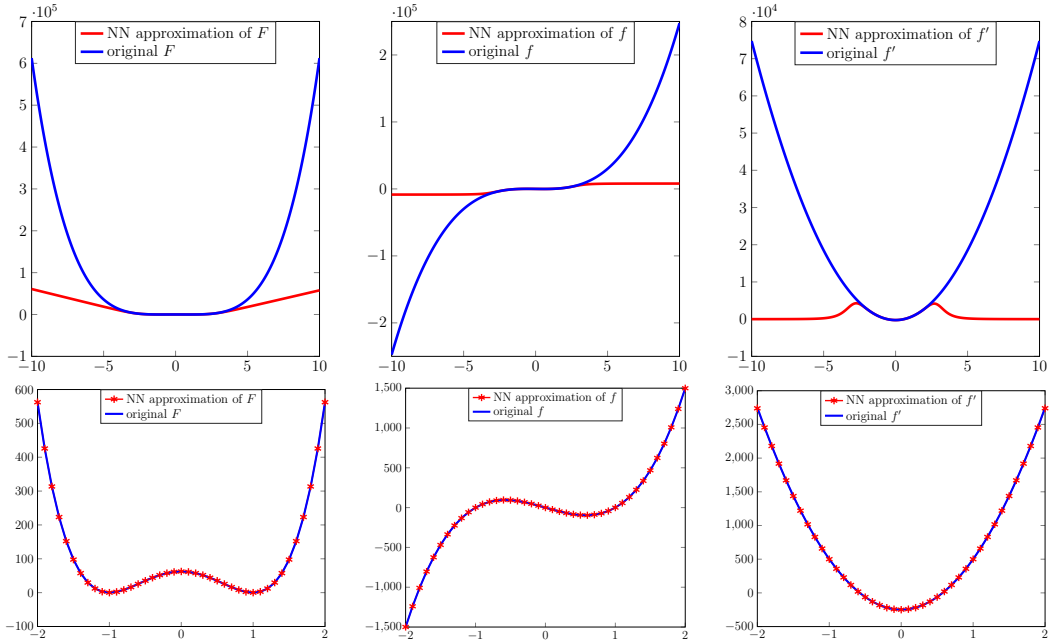
In Figure \ref{fig:non_monotone_f}, we provide the plots of  $F(z)=\int_{-1}^{z} f(t)\,dt$, the function $f$ and  its derivative $f^\prime$ on $[-K,K]\subset \R$, ($K=10$ and $K=2$, respectively) as well as their learned counterparts.

We observe that all the learning-informed versions preserve the key features of their exact counterparts very well. This is due to the fact that the training data cover exactly those ranges where important features are located.

As a next step, we consider the corresponding optimal control problem when the function $f$ is replaced by its learned version.
Notice that both the original and the learning-informed PDE admit no unique solution. Therefore the initial guess for the Newton iteration is crucial for the convergence to the final solutions.
The algorithm for solving the optimal control problem for both PDEs is a combination of the semi-smooth Newton algorithm for \eqref{eq:stationary1} (with  $0$ as the initial guess) and the SQP algorithm.
The switch between the solvers operates as follows: Consider the summed up residual of
the four equations in  \eqref{eq:stationary1} with respect to their norms in the spaces $H^{-1}(\Omega)$, $H^{-1}(\Omega)$, $L^{2}(\Omega)$ and $L^{2}(\Omega)$, respectively. Then
we start our algorithm by calling the semi-smooth Newton iterations, and when the residual drops below a threshold value (e.g., $5$ in our tests), then we switch to the SQP algorithm. The iteration is stopped if the residual is smaller than $10^{-10}$, or a maximum of $30$ iterations is reached.
We fix $\alpha= 10^{-5}$ and 
$\mathcal{C}_{ad}:=\set{u: -50\leq u \leq 50}$. Next consider $g$ to be some polarized data  preferring the values  $-1$ and $1$ and representing two distinct material states, e.g., a binary alloy; see Figure 
\ref{fig:Neumann_Allen_Cahn_optimal_control}.

\begin{figure}[h!]
	\begin{minipage}[t]{0.48\textwidth}
		\centering
		\resizebox{0.95\textwidth}{!}{
%
%
\begin{tikzpicture}

\begin{axis}[%
width=6.706in,
height=3.283in,
at={(1.125in,0.443in)},
scale only axis,
xmin=1,
xmax=10,
xtick={0, 2, 4, 6, 8, 10},
xticklabels={$0$, $2$, $4$, $6$, $8$, $10$},
xticklabel style={font=\large},  
ymode=log,
ymin=0.183895834150301,
ymax=0.976965617748144,
ytick={0.2, 0.3, 0.4, 0.5, 0.6, 0.7, 0.8, 0.9},
yticklabels={$0.2$, $0.3$, $0.4$, $0.5$, $0.6$, $0.7$, $0.8$, $0.9$},
yticklabel style={font=\large},  
yminorticks=true,
axis background/.style={fill=white},
title style={font=\bfseries},
title={\Large{Merit function value at iterations}},
legend style={at={(0.7,0.84)}, anchor=south west, legend cell align=left, align=left, draw=white!15!black}
]
\addplot [color=red, mark=asterisk, mark options={solid, red}, mark size=3.5pt, line width=1.3pt]
  table[row sep=crcr]{%
1	0.733040927951206\\
2	0.653087182155955\\
3	0.972495183495124\\
4	0.187814853440284\\
5	0.187913925448807\\
6	0.187095358472964\\
7	0.187172055812577\\
8	0.18642533521474\\
9	0.184251175090561\\
10	0.183941527969864\\
11	0\\
12	0\\
13	0\\
14	0\\
15	0\\
16	0\\
};
\addlegendentry{Learning-informed PDE}

\addplot [color=blue, line width=2.0pt]
  table[row sep=crcr]{%
1	0.734254067611782\\
2	0.656049916893515\\
3	0.976965617748144\\
4	0.187788043030654\\
5	0.187866357638422\\
6	0.187051649180194\\
7	0.187118694233224\\
8	0.186375515557492\\
9	0.184174051273507\\
10	0.183895834150301\\
11	0\\
12	0\\
13	0\\
14	0\\
15	0\\
16	0\\
};
\addlegendentry{Original PDE}

\end{axis}
\end{tikzpicture}
	\end{minipage}
	\begin{minipage}[t]{0.48\textwidth}
		\centering
		\resizebox{0.95\textwidth}{!}{
%
%
\begin{tikzpicture}

\begin{axis}[%
width=6.706in,
height=3.283in,
at={(1.125in,0.443in)},
scale only axis,
xmin=1,
xmax=10,
xtick={0, 2, 4, 6, 8, 10},
xticklabels={$0$, $2$, $4$, $6$, $8$, $10$},
xticklabel style={font=\large},  
ymode=log,
ymin=1e-15,
ymax=100000,
ytick={1e-15, 1e-12, 1e-9, 1e-6, 1e-3, 1e-0, 1e+3},
yticklabels={$10^{-15}$, $10^{-12}$, $10^{-9}$, $10^{-6}$, $10^{-3}$, $10^0$, $10^3$},
yticklabel style={font=\large},  
yminorticks=true,
axis background/.style={fill=white},
title style={font=\bfseries},
title={\Large{The norm summation of the residual of all the equations}},
legend style={at={(0.7,0.84)}, anchor=south west, legend cell align=left, align=left, draw=white!15!black}
]
\addplot [color=red, mark=asterisk, mark options={solid, red}, mark size=3.5pt, line width=1.3pt]
  table[row sep=crcr]{%
1	366.753602150525\\
2	108.91515692657\\
3	168.492086793302\\
4	74.9764867007364\\
5	18.3629521011138\\
6	7.04741115665682\\
7	6.15109849280729\\
8	4.19694429011999\\
9	0.254870609301803\\
10	7.64393360966758e-14\\
11	0\\
12	0\\
13	0\\
14	0\\
15	0\\
16	0\\
};
\addlegendentry{Learning-informed PDE}

\addplot [color=blue,  line width=2.0pt]
  table[row sep=crcr]{%
1	315.936084140842\\
2	164.845092461874\\
3	150.903197077436\\
4	22.9950755697483\\
5	10.750394152272\\
6	6.72431029976343\\
7	5.97336394692387\\
8	4.06028344510164\\
9	0.232143500811591\\
10	3.36506633661072e-14\\
11	0\\
12	0\\
13	0\\
14	0\\
15	0\\
16	0\\
};
\addlegendentry{Original PDE}

\end{axis}
\end{tikzpicture}
	\end{minipage}	
	\caption{Merit function (left) and residual norm (right).}
	\label{fig:Neumann_merit_function}
\end{figure}
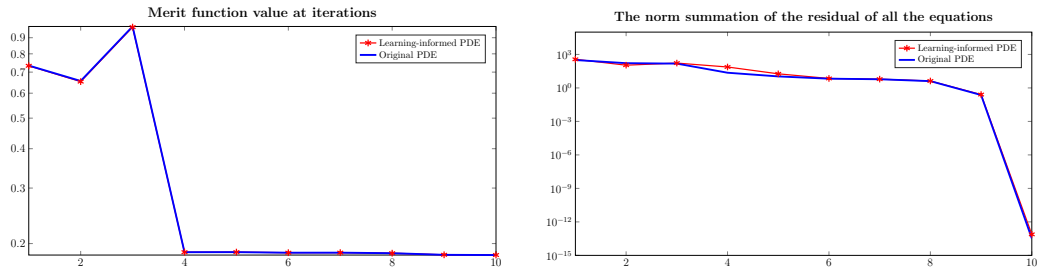
In Figure \ref{fig:Neumann_merit_function} 
we show the plots of the merit function values and also the overall residual of the first-order system in \eqref{eq:stationary1}.
The increasing part in the first few steps in the left plot (merit function) is due to the initilization of SSN while full step length is accepted. We notice that the threshold is reached by $10$ overall iterations including also the SSN initialization steps.

\begin{figure}[h!]
	\includegraphics[width=0.32\textwidth]{./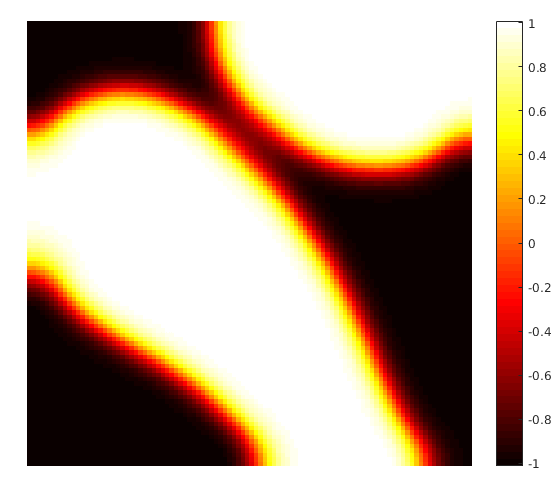}
	\includegraphics[width=0.32\textwidth]{./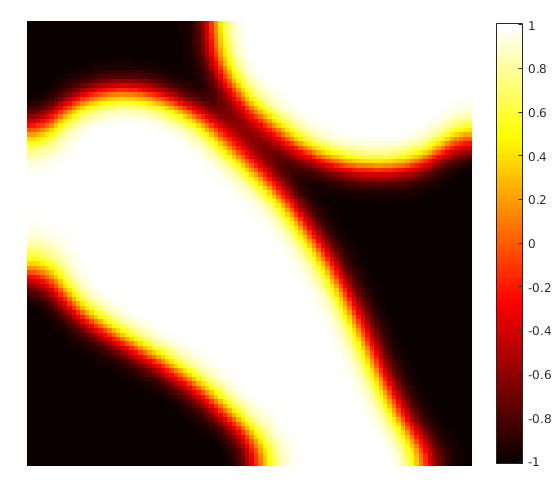}
	\includegraphics[width=0.32\textwidth]{./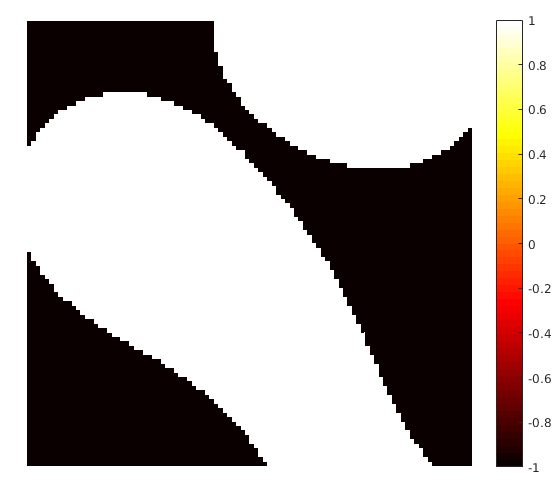}\\	\includegraphics[width=0.32\textwidth]{./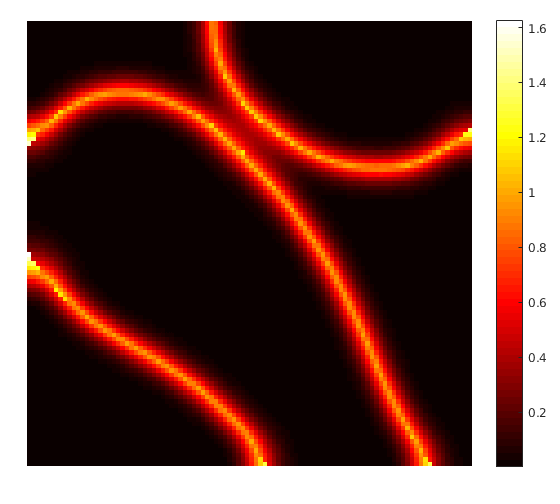}
	\includegraphics[width=0.32\textwidth]{./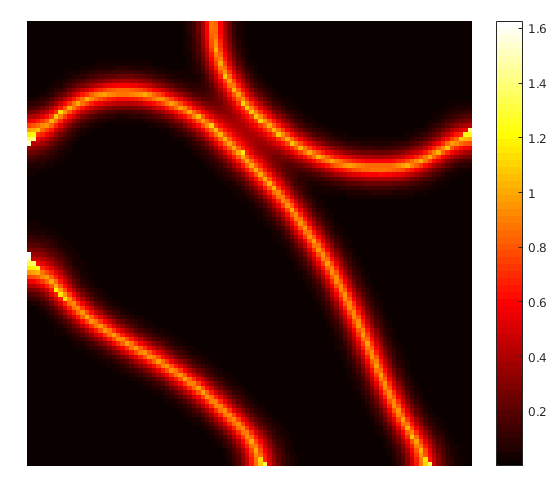}
	\includegraphics[width=0.32\textwidth]{./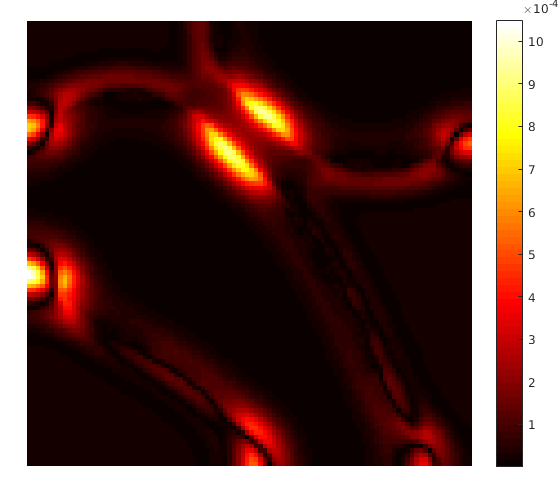}\\
	\includegraphics[width=0.32\textwidth]{./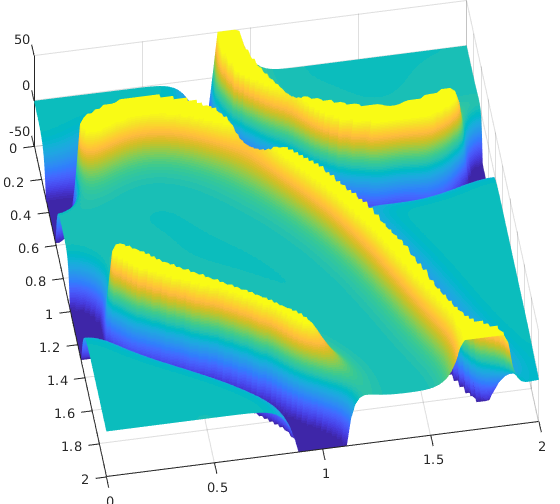}
	\includegraphics[width=0.32\textwidth]{./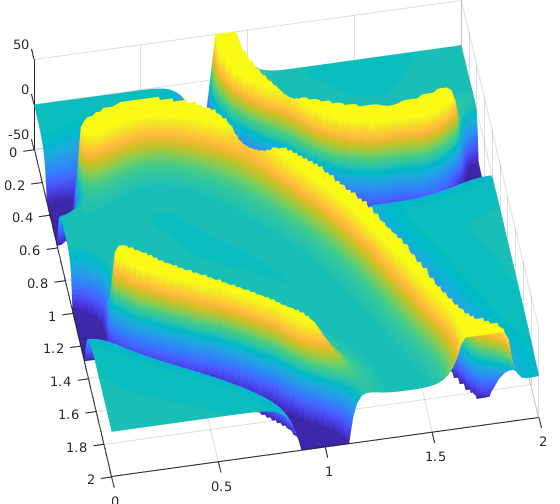}
	\includegraphics[width=0.32\textwidth]{./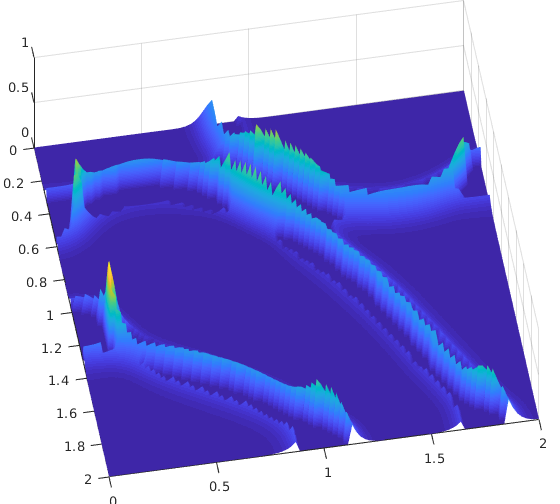}
	\caption{Optimal control of the stationary Allen-Cahn equation. First row: states (right: target data $g$; left and the middle: optimal states of learning-informed and original PDE, respectively);
		second row: difference images of states (left and the middle: differences (in absolute values) of optimal states to target state $g$; right: actual difference between the two optimal states $\abs{y_{\mathcal{N}}-\bar{y}}$ in the first row; third row: left and middle the optimal controls corresponding to the learning-informed and original PDE respectively, as well as their difference $\abs{u_{\mathcal{N}}-\bar{u}}$ on the right.}
	\label{fig:Neumann_Allen_Cahn_optimal_control}
\end{figure}

Since neither the optimal control problem nor the PDE  admit unique solutions, many local minima make the semi-smooth Newton algorithm rather sensitive to the initial guess. 
Concerning SQP we note here that enforcing the PDE and the box constraints too strongly in the early iterations, might result to the SQP algorithm getting trapped at some unfavorable stationary point. This has been numerically observed, e.g., when initializing the SQP algorithm by zero.
In our tests, the combination of the semi-smooth Newton algorithm with the SQP algorithm, however, turns out to be robust against the aforementioned adverse effects.
From  Figure \ref{fig:Neumann_Allen_Cahn_optimal_control} (right plot) 
we  observe a high accuracy approximation of the solutions of the learning-informed control to the solutions of the original control problem. Both, the PDE constraint and also the box constraint are satisfied with high accuracy.

\section{Application: Quantitative magnetic resonance imaging (qMRI)}
\label{sec:appl_2}
According to \cite{DonHinPap19}, we consider the following optimization task in qMRI:
\begin{equation}
	\label{eq:qMRI_optimal_control}
	\begin{aligned}
		&\text{minimize}\quad   \frac{1}{2}\norm{P\mathcal{F}(y)-g^\delta}_{H}^{2} + \frac{\alpha}{2}\norm{u}^2_{U},\quad\text{over }(y,u:=(T_1,T_2,\rho) ^\top)\in Y\times U, \\  
		&\text{s.t.}\quad \frac{\partial y}{\partial t}(t) =  y(t) \times \gamma B(t) - \left ( \frac{y_{1}(t)}{T_{2}}, \frac{y_{2}(t)}{T_{2}}, \frac{y_{3}(t)- \rho m_{e}}{T_{1}} \right ), \quad t=t_{1},\ldots, t_{L},\\
		&\phantom{\text{s.t.}}\quad y(0)= \rho m_{0},\\
		& \phantom{\text{s.t.}}\quad u\in \mathcal{C}_{ad}. 
	\end{aligned}
\end{equation}
where $0< t_1<\ldots<t_L$, $L\in\mathbb{N}$, $u\in U:=[H^1(\Omega)]^3$ and $Y:=[L^2(\Omega)^{3}]^{L}$ with $\Omega\subset\mathbb{R}^2$ the image domain, $H=\left [L^2(\mathbb{K})^{2}\right]^{L}$ with $\mathbb{K}$ the Fourier space. By $\mathcal{F}: Y\to H$ we denote the component-wise Fourier transform acting on
 $(y_{1}, y_{2})$, i.e., the first two coordinates of $y$, and $P:H\to H$ is a subsampling operator.

Further, $g^\delta=(g_{l}^\delta)_{l=1}^L\in H$ are (noisy) data, and $\mathcal{C}_{ad}$ is an nonempty, closed, convex, and bounded subset of 
$ [L_{\epsilon}^\infty(\Omega)^{+}]^{3}$ with $L_{\epsilon}^\infty(\Omega)^{+}:=\{f\in L^{\infty}(\om):\; \mathrm{ess}\,\mathrm{inf} f>\epsilon\}, $ for some $\epsilon>0$, 
which takes care of practical properties of physical quantities. 
The system of ordinary differential equations in \eqref{eq:qMRI_optimal_control} with initial value $\rho m_0$ represents the renowned Bloch equations (BE), which  model the evolution of nuclear magnetization in MRI \cite{Blo46} with the parameters $\gamma$ and $m_e$ being fixed constants. In our context, the external magnetic field $B$ is assumed to be a uniformly bounded function in time.
To accommodate different scaling, we consider $\frac{\mathbf{\alpha}}{2}\norm{u}^2_{U }:=\frac{\alpha_0}{2}\norm{u}^2_{[L^2(\Omega)]^3}+\frac{1}{2}\abs{u}^2_{[H^1(\Omega)]^3},$ and
\[  \abs{u}^2_{[H^1(\Omega)]^3 }:=\int_\Omega  \left(\alpha_{1,1} \abs{\nabla T_1}^2 +\alpha_{1,2}\abs{\nabla T_2}^2 + \alpha_{1,3}\abs{\nabla \rho}^2\right)dx,  \]
with $\alpha_0>0$ and $\alpha_{1,j}>0$ for $j=1,2,3$.
For the ease of presentation, below we omit these scaling parameters.
\begin{remark}\label{rem:Bloch_bounds}
One readily checks that the solutions to the BE are bounded uniformly  as long as $T_1,T_2$ are positive values and the magnetic field $B(t)$ is bounded. This property persists if either of the two terms on the right hand side of the equation is missing.
\end{remark}

Fixing the external magnetic field $B$ according to an excitation protocol with a specific sequence of frequency pulses (cf., e.g., \cite{DonHinPap19}) and associated echo times $\{t_i\}_{i=1}^L$ we have $u\mapsto \{y(t_i)\}_{i=1}^L$ yielding the solution map $\Pi:\mathcal{C}_{ad}\to  [(L^{\infty}(\Omega))^3]^L$. Using this notation we have $Q(\cdot)=P\mathcal{F}(\Pi(\cdot))$. Noting that $\Pi(T_{1}, T_{2}, \rho)=\rho \Pi(T_{1}, T_{2},1)$ we show first continuity and differentiability results for $\tilde{\Pi}(\theta):=\Pi(T_{1}, T_{2},1)$ where $\theta:=(T_1,T_2)^\top$. Even though for simplicity we do that for $\theta\in  [L_{\epsilon}^{\infty}(\Omega)^{+}]^{2}$, with $\epsilon>0$, we note  that the map $\tilde{\Pi}$ can be continuously extended also for $T_{1}=0$ and/or $T_{2}=0$.

\begin{proposition}\label{prop:continuity_Bloch}
	The operator $\tilde{\Pi}: [ L_{\epsilon}^{\infty}(\Omega)^{+}]^2\to  [(L^{\infty}(\Omega))^3]^L$ is locally Lipschitz continuous, and Fr\'echet differentiable with locally Lipschitz  derivative.
\end{proposition}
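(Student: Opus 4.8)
The plan is to exploit that, for each fixed spatial point $x\in\om$, the Bloch system is a \emph{linear affine} ordinary differential equation in the unknown $y$ whose coefficients depend smoothly on the two scalar parameters $(T_1,T_2)=\theta(x)$. Consequently $\tilde\Pi$ is the Nemytskii (superposition) operator induced by the finite-dimensional parameter-to-solution map of this ODE, and all the asserted regularity on $[L^\infty(\om)]^{3L}$ will follow by combining the corresponding finite-dimensional regularity with the benign behaviour of Nemytskii operators on $L^\infty$.

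First I would rewrite the system (with $\rho=1$, so that $y(0)=m_0$) as $\dot y(t)=M(t;\theta)\,y(t)+b(\theta)$, where $M(t;\theta)=\gamma S_B(t)-\diag(T_2^{-1},T_2^{-1},T_1^{-1})$ with $S_B(t)$ the skew-symmetric matrix representing the cross product $(\cdot)\times B(t)$, and $b(\theta)=(0,0,m_e T_1^{-1})^\top$. Since $B$ is uniformly bounded in time and $T_1,T_2\ge\epsilon$, both $M(\cdot;\theta)$ and $b(\theta)$ are bounded and depend on $\theta$ through the real-analytic maps $(T_1,T_2)\mapsto(T_1^{-1},T_2^{-1})$, hence smoothly on $(\epsilon,\infty)^2$. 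Writing the solution by variation of constants through the fundamental matrix $\Psi(t;\theta)$ (the solution of $\dot\Psi=M\Psi$, $\Psi(0)=\mathrm{Id}$), I would define the finite-dimensional maps $\Phi_i:(\epsilon,\infty)^2\to\R^3$, $\Phi_i(s_1,s_2):=y(t_i;s_1,s_2)$, so that $\tilde\Pi(\theta)(x)=(\Phi_i(\theta(x)))_{i=1}^L$ pointwise a.e.

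The second step is the finite-dimensional regularity of the $\Phi_i$, \emph{uniform} on compact parameter boxes. Differentiating the state equation with respect to $(T_1,T_2)$ produces the variational (sensitivity) equations, which are again linear ODEs driven by the smooth $\theta$-derivatives of $M$ and $b$; Gronwall's inequality then yields bounds on $\Phi_i$ and $D\Phi_i$, and an analogous estimate for the second variation shows that $D\Phi_i$ is Lipschitz. All constants here depend only on the horizon $t_L$, on $\sup_t\norm{S_B(t)}$, and on a compact parameter range $[\epsilon',M]^2\subset(\epsilon,\infty)^2$, so that $\Phi_i\in C^1$ with locally Lipschitz derivative, uniformly over such boxes.

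Finally I would lift these properties to the Nemytskii operator. Given $\theta_0\in[L_\epsilon^\infty(\om)^+]^2$, a sufficiently small $L^\infty$-ball about $\theta_0$ maps a.e.\ into a fixed box $[\epsilon',M]^2$, on which the uniform estimates of the previous step apply pointwise. Local Lipschitz continuity of $\tilde\Pi$ then follows directly from the pointwise Lipschitz bound on the $\Phi_i$ and the essential-supremum definition of $\norm{\cdot}_{L^\infty}$. For Fréchet differentiability I would take the candidate derivative $(\tilde\Pi'(\theta)h)(x):=(D\Phi_i(\theta(x))\,h(x))_{i=1}^L$ and control the remainder by the pointwise first-order Taylor expansion of each $\Phi_i$ together with the uniform Lipschitz bound on $D\Phi_i$; the same bound shows $\theta\mapsto\tilde\Pi'(\theta)$ is locally Lipschitz into $\mathcal{L}([L^\infty(\om)]^2,[L^\infty(\om)]^{3L})$. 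The main obstacle is the uniform-in-$\theta$ control of the sensitivity equations in the second step — ensuring every Gronwall constant stays uniform over the compact parameter box so that it transfers to the supremum norm — whereas the lifting itself is essentially automatic on $L^\infty$, since (unlike on $L^p$) no growth conditions on $\Phi_i$ or its derivatives are needed.
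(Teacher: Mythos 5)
Your proposal is correct, and its skeleton matches the paper's proof: both reduce the claim to a pointwise-in-$x$, finite-dimensional question about the parameter dependence of a linear-affine ODE, check that all constants are uniform over the admissible parameter range (your compact-box step; in the paper, the observation that $\tilde{L}(t)$ is independent of $x$ by uniform boundedness of the Bloch solutions), and then lift to $L^\infty$ by taking essential suprema. The finite-dimensional step itself is executed quite differently, however. The paper never invokes sensitivity equations or Gronwall: it subtracts the Bloch systems for $\theta$ and $\theta^a$, notes that $r^a:=y-y^a$ solves a linear ODE with zero initial datum and source $\left(R(\theta^a)-R(\theta)\right)(y^a-(0,0,y_e))^\top$, where $R(\theta)=\diag(T_2^{-1},T_2^{-1},T_1^{-1})$, and reads the Lipschitz estimate off the variation-of-constants formula $r^a(t)=\int_0^t \Phi(t,s)\left(R(\theta^a)-R(\theta)\right)(y^a(s)-(0,0,y_e)^\top)\,ds$, using only the Lipschitz continuity of $R$ on $[\epsilon,\infty)$; differentiability then follows by passing to the limit in the same representation for the quotient $r^a/a$, which exhibits the derivative as the explicit integral operator $v\mapsto \int_0^t \Phi(t,s)\,R'(\theta;v)\,(y(s)-(0,0,y_e)^\top)\,ds$, and the Lipschitz continuity of the derivative is inherited from that of $R'(\cdot\,;v)$. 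Your route --- classical smooth-dependence/variational equations, Gronwall bounds, a second-variation estimate for the Lipschitz property of $D\Phi_i$, and an explicit Nemytskii lifting --- is more modular and would apply verbatim to any ODE whose coefficients depend smoothly on finitely many parameters; the paper's route buys explicit integral formulas for the difference and the derivative, from which the uniformity in $x$ and the $L^q$ estimates for all $1\le q\le\infty$ (used afterwards to regard $\tilde{\Pi}'(\theta)$ as an element of $\mathcal{L}([L^2(\om)]^2,[L^2(\om)]^{3L})$) are immediate. One small point of rigor in your sketch: ``differentiating the state equation with respect to $(T_1,T_2)$'' presupposes the parameter differentiability you are trying to establish; this should be closed either by citing the smooth-dependence theorem for ODEs (e.g.\ \cite{Tes12}, the paper's own reference) or, as the paper does, by proving convergence of the difference quotients directly.
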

\begin{proof}
	Let $\theta,\theta^a\in [L_{\epsilon}^\infty(\Omega)^+]^2$ be given with associated solutions $y, y^a$ of the BE, respectively. Suppressing $x\in\Omega$ in our notation, subtracting the BE for both $\theta$ values, and letting $r^a:=y-y^a$ as well as $R(\theta) :=\operatorname{diag}(\frac{1}{T_2},\frac{1}{T_2},\frac{1}{T_1})$, we get
	\begin{equation}\label{eq:diff_Bloch}
		\frac{\partial r^a}{\partial t}(t) -  r^a(t) \times \gamma B(t) +R(\theta) r^a =\left(R(\theta^a)-R(\theta)\right) (y^a(t)-(0,0,y_e))^\top ,\;
		r^a(0) =  0.
	\end{equation}
	This equation and its homogeneous counterpart (i.e., with zero right hand side) admit unique solutions, respectively, cf. \cite{Tes12}, for instance. 
	According to \cite[Theorem 3.12]{Tes12} the solution to \eqref{eq:diff_Bloch} is
	\begin{equation}\label{eq:solution_diff_Bloch}
		r^a(t)=  \int_0^t \Phi(t,s) \left(R(\theta^a)-R(\theta)\right) (y^a(s)-(0,0,y_e)^\top )ds,
	\end{equation}
	where $ \Phi(t,s) $ is the principal matrix consisting of the three independent  solutions of the homogeneous counterpart of \eqref{eq:diff_Bloch}
	resulting from the initial data $h(s)=e_{i}$, $i=1,2,3$, with $\{e_1,e_2,e_3\}$ the canonical orthonormal basis in $\R^{3}$. Note that it is easy to check that any such solution is uniformly bounded both in $t\ge 0$ and $\theta\ge0$ almost everywhere.
	Since $R(\cdot)$ restricted to $[\epsilon,\infty)$ is  Lipschitz (modulus $L>0$), 
	\eqref{eq:solution_diff_Bloch} can be further estimated as follows
	\[ \abs{r^a(t)}\leq  L\int_0^t |\Phi(t,s) (y^a(s)-(0,0,y_e)^\top )|ds \abs{\theta^a-\theta} \leq \tilde{L}(t) \abs{\theta^a-\theta},\]
	for all $\theta^a,\theta \in  [L_{\epsilon}^\infty(\Omega)^+]^2$. Note that the above estimate and in particular $\tilde{L}(t)$ can be considered independent of the spatial variable $x$ due to the uniform bound on the solution of BE for every element of $\mathcal{C}_{ad}$ (cf. Remark \ref{rem:Bloch_bounds}). 
	Therefore we have for some $L_{\Pi}>0$ that
	\begin{equation*}
		\|y^{a}(\cdot,t)-y(\cdot, t)\|_{[L^{q}(\om)]^{3}} \leq L_{\Pi}  \| \theta^a-\theta\|_{[L^{q}(\om)]^{2}} \; \text{ for all } 1\le q\le \infty.  
	\end{equation*}
	By considering the above estimate at $\{t_{i}\}_{i=1}^L$ we get  the asserted local Lipschitz continuity of $\tilde{\Pi}$.

	We now proceed to Fr\'echet differentiability. 
	Let $\theta\in  [L_{\epsilon}^\infty(\Omega)^+]^2$, $v\in [L^\infty(\Omega)]^2$ be an arbitrary  vector, and let $\theta^a= \theta+a v$ where $a >0$ is such that $\theta^a\in [L_{\epsilon}^\infty(\Omega)^{+}]^2$.
	Dividing \eqref{eq:diff_Bloch} by $a$ and letting $p_\theta^a:=\frac{r^a}{a}$,  we get:
	\begin{equation}\label{eq:adjoint_Bloch}
		\frac{\partial p_\theta^a}{\partial t}(t) -  p_\theta^a(t) \times \gamma B(t) +R(\theta) p_\theta^a =\frac{\left(R(\theta^a)-R(\theta)\right)}{a} (y^a(t)-(0,0,y_e))^\top,\;\;
		p_\theta^a(0) =  0.
	\end{equation}
	Existence, uniqueness and representation of a solution again follows from \cite[Theorem 3.12]{Tes12}:
	\[ p_\theta^a(t)=  \int_0^t \Phi(t,s) \frac{\left(R(\theta+ a v)-R(\theta)\right)}{a} (y^a(s)-(0,0,y_e)^\top )ds.\]
	Recall that $R(\cdot)$ is continuously differentiable for $\theta >0$ and time independent. For $a\downarrow 0$ and $p_\theta:=\lim_{a  \to 0} p_\theta^a$, we have
	\[ p_\theta(t)= \int_0^t \Phi(t,s)  R'(\theta;v) (y(s)-(0,0,y_e)^\top )ds ,  \]
	where $R'(\theta;v)$ denotes the directional derivative of $R$ at $\theta$ in direction $v$. 
	By considering again the uniform boundedness with respect to the spatial variable and pointwise evaluation at $\{t_{i}\}_{i=1}^L$, we get that $p_\theta=\tilde{\Pi}'(\theta;v) $ is bounded, and  also linear with respect to the direction $v\in [L^\infty(\Omega)]^2$.
	Thus, $\tilde{\Pi}$ is Gateaux differentiable.
	Notice further that, due to $R'(\cdot;v) $ being locally Lipschitz, we have also the local Lipschitz continuity (modulus $L_{p_\theta}>0$) of the directional derivative:
	\begin{equation}\label{eq:p_norm_estimate}
		\abs{p_{\theta^a}-p_\theta}^q \leq L^q_{p_\theta} \abs{\theta^a-\theta}^q\|v\|_{[L^\infty(\Omega)]^2}   \;\text{ for  all }\; \theta^a, \theta  \in  [L_{\epsilon}^\infty(\Omega)^+]^2, \text{ and  }  1 \leq q \leq \infty, 
	\end{equation}
	with the above estimate again independent of the spatial variable.
	This together with the linearity of the Gateaux derivative implies the  Fr\'echet differentiability of $\tilde{\Pi}$.
	Finally we also conclude the Lipschitz continuity of the Fr\'echet derivative:
	\begin{equation} \label{eq:B_prime_Lip}
		\norm{(\tilde{\Pi}^\prime(\theta^a) -\tilde{\Pi}^\prime(\theta))v}_{[L^{\infty}(\om)]^{3L}} \leq L_{p_\theta} \norm{\theta^a -\theta}_{[L^{\infty}(\om)]^{2}}\norm{v}_{[L^{\infty}(\om)]^{2}}.
	\end{equation}
This ends the proof.
\end{proof}
Note that the continuity and differentiability of $\Pi=\rho\tilde{\Pi}$ for $u\in \mathcal{C}_{ad}$ follows readily as $\rho\in L^\infty(\Omega)$. As a consequence, existence of a solution to \eqref{eq:qMRI_optimal_control} can be shown similarly to Proposition \ref{pro:existence_wsc}.

\begin{remark}
	The estimate \eqref{eq:p_norm_estimate} indicates that for every $u=(\theta^\top,\rho)^\top\in \mathcal{C}_{ad}$, and $h \in [L^\infty(\Omega)]^2$ sufficiently small, we even have
	\[ \norm{\tilde{\Pi}(\theta+h)-\tilde{\Pi}(\theta)- \tilde{\Pi}^\prime(\theta)h}_{[L^q(\Omega)]^{3L}}= \mathcal{O}(\norm{h}^2_{[L^q(\Omega)]^2} ) \quad \text{ for all } 1\leq q\leq \infty. \]
	We also note that due to properties of the Bloch operator, we have that both $\tilde{\Pi}^\prime(\theta): [L^2(\Omega)]^2\to [L^2(\Omega)]^{3L}$ and $Q^\prime(u): [L^2(\Omega)]^3 \to  [(L^2(\mathbb{K}))^{2}]^{L}$ are bounded linear operators, respectively, as soon as $u=(\theta^\top,\rho)^\top\in \mathcal{C}_{ad}$. In this sense, we consider in the following $\tilde{\Pi}^\prime(\theta)$ and $Q^\prime(u)$ to be elements in $\mathcal{L}([L^2(\Omega)]^2,Y)$ and $\mathcal{L}(U,H)$, respectively.
\end{remark}
We are now interested in finding a data-driven approximation $\Pi_{\mathcal{N}}(u) := \rho\mathcal{N}(T_1,T_2)$ of $\Pi$ and in solving the reduced problem
\begin{equation}
	\label{eq:qMRI_nn}
	\begin{aligned}
		&\text{minimize}\quad   \frac{1}{2}\norm{Q_{\mathcal{N}}(u)-g^\delta}_{H}^{2} + \frac{\alpha}{2}\norm{u}^2_{U},\quad\text{over }u\in U,\\
		&\text{s.t.  } \quad  u=(T_{1}, T_{2}, \rho) ^\top\in \mathcal{C}_{ad},
	\end{aligned}
\end{equation}
with $Q_\mathcal{N}(u)=P\mathcal{F}(\Pi_{\mathcal{N}}(T_1,T_2,\rho))$. Existence of a solution to \eqref{eq:qMRI_nn} can again be argued similarly to Proposition \ref{pro:existence_wsc}.

We finish this section with the corresponding approximation result.
\begin{proposition}
	Let $\theta=(T_1,T_2)^\top$, $u=(\theta^\top,\rho)^\top \in  \mathcal{C}_{ad}$. Assume the following error bounds in the neural network approximations
	\[\norm{\mathcal{N}(\theta)-\tilde{\Pi}(\theta)}_{[L^{\infty}(\Omega)^{3}]^L}\leq \epsilon \quad  \text{ 
		and } \quad \norm{\mathcal{N}^\prime(\theta)-\tilde{\Pi}^\prime(\theta)}_{\mathcal{L}([L^2(\Omega)]^2,[L^{\infty}(\Omega)^{3}]^L)}\leq \epsilon_1, \]
	Then we have
	\begin{align}
		\norm{Q(u)-Q_{\mathcal{N}}(u)}_H&\leq C\epsilon,\\
		\norm{Q^\prime (u)-Q^\prime_{\mathcal{N}}(u)}_{\mathcal{L}(U,H)}&\leq   C_1 \epsilon+C_2 \epsilon_{1},
	\end{align}
	for some positive constants $C$, $C_1$ and $C_2$ which are all independent of $\epsilon$ and $\epsilon_{1}$.
\end{proposition}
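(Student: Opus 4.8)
The plan is to exploit the multiplicative structure $\Pi(u)=\rho\tilde{\Pi}(\theta)$ and $\Pi_{\mathcal{N}}(u)=\rho\mathcal{N}(\theta)$ together with the linearity and boundedness of $P$ and $\mathcal{F}$, so that both estimates reduce directly to the assumed approximation bounds on $\tilde{\Pi}$ and $\tilde{\Pi}'$. Throughout I would use three structural facts: that $P\mathcal{F}\in\mathcal{L}(Y,H)$ (with $\mathcal{F}$ an $L^2$-isometry by Plancherel and $P$ a subsampling contraction); that $\rho$ is uniformly bounded in $L^\infty(\Omega)$ because $u\in\mathcal{C}_{ad}$, a bounded subset of $[L^\infty_\epsilon(\Omega)^+]^3$; and that, since $\Omega$ is bounded, the embedding $L^\infty(\Omega)\hookrightarrow L^2(\Omega)$ holds with constant $\abs{\Omega}^{1/2}$, which is what converts the $L^\infty$-type hypotheses into the $L^2$-based norms of $Y$ and $H$.

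For the first inequality I would write, using linearity of $P\mathcal{F}$,
\[
Q(u)-Q_{\mathcal{N}}(u)=P\mathcal{F}\big(\rho(\tilde{\Pi}(\theta)-\mathcal{N}(\theta))\big),
\]
so that
\[
\norm{Q(u)-Q_{\mathcal{N}}(u)}_H\le \norm{P\mathcal{F}}_{\mathcal{L}(Y,H)}\,\norm{\rho}_{L^\infty(\Omega)}\,\norm{\tilde{\Pi}(\theta)-\mathcal{N}(\theta)}_{[L^2(\Omega)^3]^L}.
\]
Passing from the $[L^2(\Omega)^3]^L$ norm to the $[L^\infty(\Omega)^3]^L$ norm via the embedding and invoking the first hypothesis yields the claim with $C=\norm{P\mathcal{F}}_{\mathcal{L}(Y,H)}\,\norm{\rho}_{L^\infty(\Omega)}\,\abs{\Omega}^{1/2}$.

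For the second inequality I would first differentiate the products. Writing a direction as $h=(h_\theta^\top,h_\rho)^\top$ with $h_\theta=(h_{T_1},h_{T_2})^\top$, the product rule gives $Q'(u)h=P\mathcal{F}\big(h_\rho\,\tilde{\Pi}(\theta)+\rho\,\tilde{\Pi}'(\theta)h_\theta\big)$, and the analogous identity for $Q_{\mathcal{N}}'(u)h$ with $\tilde{\Pi},\tilde{\Pi}'$ replaced by $\mathcal{N},\mathcal{N}'$. Subtracting, the difference splits into a term carrying $\tilde{\Pi}(\theta)-\mathcal{N}(\theta)$ and a term carrying $\tilde{\Pi}'(\theta)-\mathcal{N}'(\theta)$:
\[
(Q'(u)-Q_{\mathcal{N}}'(u))h=P\mathcal{F}\Big(h_\rho\big(\tilde{\Pi}(\theta)-\mathcal{N}(\theta)\big)+\rho\big(\tilde{\Pi}'(\theta)-\mathcal{N}'(\theta)\big)h_\theta\Big).
\]
I would bound the first summand by $\norm{\tilde{\Pi}(\theta)-\mathcal{N}(\theta)}_{[L^\infty]^{3L}}\norm{h_\rho}_{L^2}\le\epsilon\norm{h}_U$, using $\norm{fg}_{L^2}\le\norm{f}_{L^\infty}\norm{g}_{L^2}$ and $\norm{h_\rho}_{L^2}\le\norm{h}_U$, and the second by applying the operator-norm hypothesis on $\tilde{\Pi}'(\theta)-\mathcal{N}'(\theta)$ in $\mathcal{L}([L^2]^2,[L^\infty]^{3L})$ before the embedding into $[L^2]^{3L}$, giving $\norm{\rho}_{L^\infty}\abs{\Omega}^{1/2}\epsilon_1\norm{h_\theta}_{[L^2]^2}\le\norm{\rho}_{L^\infty}\abs{\Omega}^{1/2}\epsilon_1\norm{h}_U$. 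Multiplying by $\norm{P\mathcal{F}}_{\mathcal{L}(Y,H)}$ and taking the supremum over $\norm{h}_U\le1$ produces $\norm{Q'(u)-Q_{\mathcal{N}}'(u)}_{\mathcal{L}(U,H)}\le C_1\epsilon+C_2\epsilon_1$. The argument is essentially bookkeeping, so I do not anticipate a genuine obstacle; the only point demanding care is the consistent conversion between the $L^\infty$-norms in which the ANN hypotheses are stated and the $L^2$-based norms of $Y$ and $H$, which rests on the finiteness of $\abs{\Omega}$ and the uniform bound for $\rho$ from the boundedness of $\mathcal{C}_{ad}$. The one conceptually noteworthy feature, worth emphasizing, is that the two error scales enter through distinct channels: $\epsilon_1$ governs the sensitivity in the relaxation-time directions $\theta$, whereas the plain function error $\epsilon$ governs the proton-density direction $\rho$, precisely because $\Pi$ depends linearly on $\rho$.
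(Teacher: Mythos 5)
Your proposal is correct and follows essentially the same route as the paper: both exploit the factorization $\Pi(u)=\rho\,\tilde{\Pi}(\theta)$, bound the zeroth-order error via boundedness of $P\mathcal{F}$ and of $\rho$ in $L^\infty(\Omega)$ together with the embedding $L^\infty(\Omega)\hookrightarrow L^2(\Omega)$, and for the derivative use the product-rule splitting $(Q'(u)-Q'_{\mathcal{N}}(u))h=P\mathcal{F}\big(h_\rho(\tilde{\Pi}(\theta)-\mathcal{N}(\theta))+\rho(\tilde{\Pi}'(\theta)-\mathcal{N}'(\theta))h_\theta\big)$, estimating the two summands by $\epsilon$ and $\epsilon_1$ respectively. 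Your version is in fact slightly more explicit than the paper's (tracking the constant $|\Omega|^{1/2}$ and keeping the component ordering of $u=(T_1,T_2,\rho)^\top$ consistent, where the paper's displayed derivative formula tacitly reorders the components of $v$), but the underlying argument is identical.
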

Before we commence with the proof, note that the above assumptions are plausible in view of $u\in \mathcal{C}_{ad}\subset [(L_\epsilon^\infty(\Omega))^+]^3$ and Theorems \ref{thm:function_app} and \ref{thm:deriv_app}.
\begin{proof}
	The first  estimate is straightforward from the definition of $Q$ 
	\begin{equation}
		\norm{Q(u)-Q_{\mathcal{N}}(u)}_H =\norm{P\mathcal{F}(\rho (\mathcal{N}(\theta)-\tilde{\Pi}(\theta)))}_H\leq \norm{\rho (\mathcal{N}(\theta)-\tilde{\Pi}(\theta))}_{[L^2(\Omega)^{3}]^L} \leq C\epsilon,
	\end{equation}
	since $\mathcal{C}_{ad}\subset [L^\infty(\Omega)]^3$ is a bounded set.
	
	To see the second estimate, notice that for every $v:=(v_{1}, v_{2}, v_{3})^\top \in [L^2(\Omega)]^3$,
	\begin{equation}\label{eq:mri_derivative}
		Q^\prime(u)v=P\mathcal{F}(v_1 \tilde{\Pi}(\theta)) + P\mathcal{F}(\rho \tilde{\Pi}^\prime(\theta) (v_2,v_3)^\top), 
	\end{equation}
	and similarly for $Q_{\mathcal{N}}'$. Thus,
	\begin{align*}
		\norm{(Q^\prime (u)-Q^\prime_{\mathcal{N}}(u))v}_H
		\leq & C_1\norm{\mathcal{N}(\theta)-\tilde{\Pi}(\theta)}_{ [L^{\infty}(\om)^{3}]^{L}} \|v_{1}\|_{L^{2}(\om)}\\ & +C_2\norm{\mathcal{N}^\prime(\theta)-\tilde{\Pi}^\prime(\theta)}_{\mathcal{L}([L^2(\Omega)]^2,[L^{\infty}(\om)^{3}]^{L})} \|(v_{2}, v_{3})\|_{[L^2(\Omega)]^2},
	\end{align*}
	which ends the proof.
\end{proof}
Finally, we show the Lipschitz continuity of $Q$ and $Q'$. For the learning-informed versions this is done similarly. Using the isometric property of the Fourier transform and the triangle inequality, we get for every $u_a,u_b\in \mathcal{C}_{ad}$ and some $C\geq 1$:
\[\norm{Q(u_a) -Q(u_b)}_H
\leq C\left(\norm{\rho_a-\rho_b}_{L^2(\Omega)}+ \norm{\theta_a-\theta_b}_{[L^2(\Omega)]^2}\right). \]

Similarly, we estimate $\norm{(Q^\prime(u_a)- Q^\prime(u_b))v}_H$ assuming that $v$ is unitary:
\[
\begin{aligned}
& \norm{(Q^\prime(u_a)- Q^\prime(u_b))v}_H\\
\leq& \norm{ P\mathcal{F}(v_1 (\tilde{\Pi}(\theta_1) - \tilde{\Pi}(\theta_2))) }_H 
+\norm{P\mathcal{F}\left((\rho_1 \tilde{\Pi}^\prime(\theta_1) -\rho_2 \tilde{\Pi}^\prime(\theta_2))[v_2,v_3]  \right)}_H\\
\leq& L_{\tilde{\Pi}}\norm{\theta_1-\theta_2}_{[L^2(\Omega)]^2} + \norm{\rho_1-\rho_2}_{L^\infty(\Omega)} +L_{p_\theta}\norm{\rho_2}_{L^\infty(\Omega)} \norm{\theta_1-\theta_2}_{[L^2(\Omega)]^2}.
\end{aligned}
\]
Here, we use the fact that $\mathcal{F}$ is a unitary operator,  $\| \tilde{\Pi}(\theta)\|_{[L^\infty(\om)^{3}]^{L}}$ is uniformly bounded,  and $L_{\tilde{\Pi}}$ and $L_{p_\theta}$ are the Lipschitz constants of $\tilde{\Pi}(\theta)$ and $\tilde{\Pi}^\prime(\theta)$, respectively.

\subsection{Numerical algorithm}
For the numerical solution of the reduced optimization problem associated with the present qMRI problem, we adopt the SQP method, i.e., Algorithm \ref{alg:SQP}, from the previous application to the qMRI setting. The only difference is that we do not need the Newton iterations in Step $(a1)$ there.
Recall that now we have $u=(T_1,T_2,\rho)^\top$. In comparison to the previous PDE examples, the sensitivity of the reduced objective functional in \eqref{eq:qMRI_nn} is directly available as
\begin{equation}\label{eq:stationary_mri}
	\mathcal{J}'_{\mathcal{N}}(u)=(\rho (\mathcal{N}^\prime(T_1,T_2))^\ast,\mathcal{N}(T_1,T_2))^\top \mathcal{F}^\ast(\mathcal{F} (\rho \mathcal{N}(T_1,T_2)) -g)+\alpha(\text{Id}-\Delta) ( T_1, T_2, \rho)^\top.
\end{equation}
Further, in every QP-step one is confronted with solving
\begin{equation}\label{eq:SQP_MRI}
	\begin{aligned}
		\text{minimize}& \quad  \langle \mathcal{J}_{\mathcal{N}}'(u_k),h\rangle_{U^{\ast}, U} + \frac{1}{2}\langle H_k(u_k)h, h \rangle_{U^{\ast}, U}\quad\text{over }h\in U\\
		\text{s.t.} & \quad  u_k+h \in \mathcal{C}_{ad},
	\end{aligned}  
\end{equation}
where now $H_k(u_k)$ is the following symmetrized version of the Hessian of $  \mathcal{J}_{\mathcal{N}}$ at $u_k \in \mathcal{C}_{ad}$:
\[ 
\begin{array}{ll}
(\rho (\mathcal{N}^\prime(T_1,T_2))^\ast,\mathcal{N}(T_1,T_2))^\top \mathcal{F}^\ast \mathcal{F} (\rho (\mathcal{N}^\prime(T_1,T_2)),\mathcal{N}(T_1,T_2)) +\alpha( \text{Id}-\Delta)  .
\end{array}
\]

In the following tests, we choose $\mu_0=1$, $\epsilon=10^{-5}$, $r=0.618$, $\kappa=10^{-3}$, and $\xi =0.5$. We stop the SQP iteration when the norm of the residuals of the first-order optimality system drops below a user-specified threshold value of $10^{-3}$ or a maximum of $40$ iterations is reached. The regularization parameter is $\alpha_0=[1,1,1]\times 10^{-10}$ for the $L^2$ part in the regularization functional  in \eqref{eq:qMRI_nn}, and $\alpha_1 = [1,20,2]\times10^{-9} $ for the $H^1$  seminorm part in \eqref{eq:qMRI_nn}, with respect to $T_1, \; T_2,\; \rho$,  respectively. The parameter $c$ in the complementary constraint is chosen to be $10^{9}\alpha_1$ in the numerical tests, which is different to the previous examples. The values of all remaining parameters in Algorithm \ref{alg:SQP} not explicitly mentioned here, are kept the same as in the previous tests. We notice here that due to the analytical structure of the problem, the primal-dual active set algorithm for this example is equivalent to a SSN approach only in the discretized setting. We refer to \cite{HiKu-PathII} for a path-following SSN solver which works in function space upon Moreau-Yosida regularization of the indicator function of the constraint set.

\subsection{Numerical results on qMRI}
For the generation of the training data, we use the explicit Bloch dynamics of \cite{DavPuyVanWia14} where a specific pulse sequence with acronym IR-bSSFP (short for {\it Inversion Recovery balanced Steady State Free Precession}) is considered.
Let $(M_l)_{l=1}^L$ denote the pertinent explicit solution. This yields $\Pi(u)=\rho (M_l(T_1,T_2))_{l=1}^{L}$, with $u=(T_1,T_2,\rho)^\top$. 
The MRI tests are implemented based on an anatomical brain phantom, publicly available from the Brain Web Simulated Brain Database \cite{Brainweb,Col_etal_98}. 
We use a slice with $217\times 181$ pixels from this database and cut some of the zero fill-in pixels so that we finally arrive at a $181\times 181$-pixel image. The selected range for $u$ reflects natural values encountered in the human body. This gives rise to the box constraint $\mathcal{C}_{ad}:=\{u= (T_1,T_2,\rho)^\top: T_1\in (0, 5000), T_2\in (0,1800), \rho\in(0,6000)\}$. 
In Figure \ref{fig:ideal_solutions}, we show the images from the brain phantom for ideal parameter maps $T_1$, $T_2$ and $\rho$.
\begin{figure}[!ht]
	\centering
	\includegraphics[width=\textwidth]{./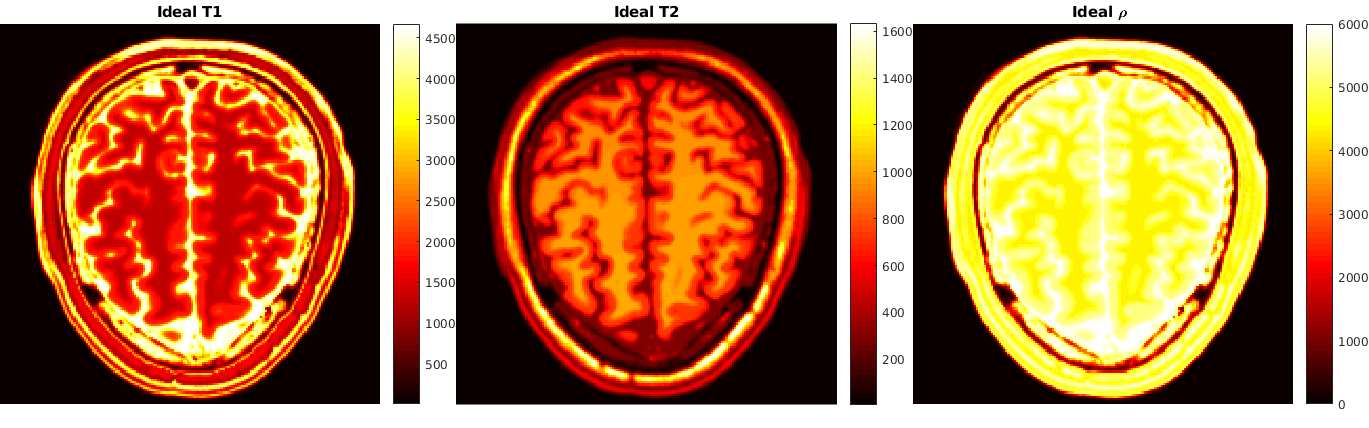}
	\caption{ Simulated ideal tissue parameters of a brain phantom.}
	\label{fig:ideal_solutions}
\end{figure}
\paragraph{Loss function and training method} For each residual of two neighbored images in the time series, we use the mean squared error as the loss function and the Bayesian regularization  algorithm based on the Levenberg-Marquardt method for the training of the residual neural networks DRNN described below. The learning algorithm and the setting are the same as the previous examples.

\paragraph{Architecture of the network}
In order to approximate the Bloch solution map, we use  Direct Residual Neural Networks (DRNNs). Here the solution map at a given time  is approximated by a neural network depending  only on the initial condition $M_0$. To explain this in detail, let $\hat{M}$ be the learned approximation of $M$, i.e. $\hat{M}_{l}(T_{1}, T_{2})\simeq M_{l}(T_{1}, T_{2})$, $l=1,\ldots, L$. The DRNN framework then reads:
\begin{equation}
	\label{eq:dir_res_nn}
	\hat{M}_{l}(T_{1}, T_{2})=\hat{M}_{0}(T_{1}, T_{2}) + \mathcal{N}_{\Theta_l}(T_{1}, T_{2}),\quad \text{} l=1,\ldots, L,\quad
	\hat{M}_{0}(T_{1}, T_{2})=M_0,
\end{equation}
with sub-networks $\{\mathcal{N}_{\Theta_{l}}\}_{l=1}^{L}$.
The  map $(M_l)_{l=1}^L$ is then simply approximated by the map $(M_0 + \mathcal{N}_{\Theta_l})_{l=1}^{L}$.

We use sub-networks with a total number of hidden layers equal to 1, 2, or 3. In each case, we design the architecture at every layer so that  the total degrees of freedom in $\Theta$ are essentially the same.
The detailed description is summarized in Table \ref{tab:net_arc_MRI}. 
In total, we test $9$ different architectures. For every network, we use the 'softmax' activation function in the layer next to the output layer, and the 'logsigmoid' function in all other hidden layers.
The difference to the previous optimal control examples is that the architecture applies to every sub-network which is of residual type, as described above.
\begin{table}[h!]
	\begin{center}
		\renewcommand{\arraystretch}{1.0}
		\resizebox{\textwidth}{!}{
			\begin{tabular}{|l|llll|llll|llll|}\hline
				& HL 1 & HL 2 & HL 3 & DoF 	& HL 1 & HL 2 & HL 3 & DoF	& HL 1 & HL 2 & HL 3 & DoF \\
				\hline
				&\multicolumn{4}{c|}{Small DoF}&\multicolumn{4}{c|}{Medium DoF}&\multicolumn{4}{c|}{Large DoF}\\
				\hline
				1-L-NN & 24 &  - & - &  122& 75 &  - & - &    377& 130 &  - & - &    652 	\\
				2-L-NN  & 7 &  10 & - &   123&17 & 16 & - &   373 &23 & 22 & -&    643		\\
				3-L-NN  &5 & 8 & 5&   120&10 & 15 & 10 &    377&15 & 18 & 15&    650 	\\
				\hline
		\end{tabular}}\\
		\vspace{5pt}
		\caption{The architecture of every sub-network. Both input and output layers have two neurons.}\label{tab:net_arc_MRI}
	\end{center}
\end{table}

\paragraph{Training and validation data}

The training including also the validation data are generated from the dictionary which has been used in methods for magnetic resonance fingerprinting (MRF), e.g., \cite{DavPuyVanWia14, Ma_etal13}.
These are time series resulting from the dynamics, such as e.g. IR-bSSFP, which was introduced in \cite{Sche99}, given the initial value $M_0=(0,0,-1)$. We fix the length of the pulse sequence to be $L=20$.
Of course, other numerical simulations of the Bloch equations can also be proper options as input-output training data.
We test each of the networks with architectures according to Table \ref{tab:net_arc_MRI} using three levels of training data, which we term 'small', 'medium' and 'large'.
For the small size training data, we generate parameter values for $(T_1,T_2)$ from $D_1:=(0:400:5000)$ and $D_2:=(0:100:1800)$ (in MATLAB notation) which contribute $247 $ entries of time series; for the medium size training data from $D_1:=(0:200:5000)$ and $D_2:=(0:50:1800)$ with a total of $962$ entries; and
for the large size data  $D_1:=(0:50:5000)$ and $D_2:=(0:20:1800)$ resulting in total in $9191$ entries.
The input data of the neural networks consist of elements of the set $D_1 \times D_2$. 
Note here that we include $0$ for both $T_1$ and $T_2$, respectively, to take care of the marginal area in the imaging domain. 
The output data will be the Bloch dynamics corresponding to each pair of elements in  $D_1 \times D_2$. 
Both input and output data are normalized to pairs whose elements take values in the range $[-1,1]$. This is done by \verb+mapminmax+ function in MATLAB.

For the SQP we consider the image domain to be $[0,1]\times [0,1]$, thus the spatial discretization size is $h=1/180$.
We compare the results of the learning-based method with results from the algorithm proposed in our previous work \cite{DonHinPap19}.  
The initialization to the SQP algorithm and also the algorithm in \cite{DonHinPap19} is done by using the so-called BLIP algorithm of \cite{DavPuyVanWia14} with a dictionary resulting from the small size $D_1\times D_2$. The parameters are tuned as in \cite{DonHinPap19}.
Concerning the degradation of our image data we consider here Gaussian noise of mean $0$ and standard deviation $30$.

\begin{table}[!ht]
	\begin{center}
		\resizebox{\textwidth}{!}{
			\begin{tabular}{ |c|llll|llll|llll|}
				\hline
				&\multicolumn{4}{c|}{Small DoF} &\multicolumn{4}{c|}{Medium  DoF}  
				&\multicolumn{4}{c|}{Large DoF}  \\  \hline 
				&  $T_1$ & $T_2$ &  $\rho $ & $M(\theta)$   & $T_1$ & $T_2$ &   $\rho$ & $M(\theta)$    & $T_1$ & $T_2$ &   $\rho$ & $M(\theta)$      \\ \hline
				& \multicolumn{12}{c|}{Small training data }    \\ \hline
				1 Layer NN & $0.084$ & $0.056$  &  $0.004 $ & $0.016$ & $ -$ &  $  -$ & $   -$ & $  -$ & $  -$ &  $ -$ & $  - $ & $-  $   \\ \hline
				2 Layer NN & $0.093$ & $0.054$  &  $0.005$ & $  0.013 $ & $ -$ &  $  -$ & $   -$ & $  -$ & $ -$ &  $  -$ & $   -$ & $  -$  \\ \hline
				3 Layer NN & $0.087$ & $0.052$  &  $0.009$ & $  0.012 $ &$ -$ &  $  -$ & $   -$ & $  -$ & $ -$ &  $  -$ & $   -$ & $  -$  \\ \hline
				& \multicolumn{12}{c|}{Medium training data}    \\ \hline				
				1 Layer NN & $0.084$ & $0.058$  &  $0.003 $ & $0.004$ &$ 0.089$ & $0.052  $  &  $ 0.002 $ & $0.005  $  & $ -$ &  $  -$ & $   -$ & $  -$ \\ \hline
				2 Layer NN & $0.143 $ & $0.060  $  &  $0.006  $ & $  0.004 $ & $0.090$ & $0.052 $  &  $0.005 $ & $  0.003$ & $ -$ &  $  -$ & $   -$ & $  -$  \\ \hline
				3 Layer NN & $0.086$ & $0.051$  &  $0.003$ & $  0.004 $ & $0.087$ & $0.051 $  &  $ 0.004$ & $0.002  $ & $ -$ &  $  -$ & $   -$ & $  -$ \\ \hline
				& \multicolumn{12}{c|}{Large training data}    \\ \hline				
				1 Layer NN & $0.120$ & $0.078$  &  $0.005$ & $0.002$ & $0.120  $ & $ 0.081  $  &  $0.004  $ & $ 0.0014 $ & $ 0.090  $ &  $0.050  $ & $  0.004  $ & $ 0.0009 $   \\ \hline
				2 Layer NN & $0.094 $ & $0.057  $  &  $0.006  $ & $  0.001 $ & $0.094 $ & $0.043$  &  $ 0.002 $ & $0.002 $ & $0.089  $ &  $ 0.056 $ & $0.004    $ & $ 0.0012  $  \\ \hline
				3 Layer NN & $0.096$ & $0.059$  &  $0.005$ & $  0.0007 $ & $0.087  $ & $0.051  $  &  $0.004  $ & $0.0004$ & $ 0.087 $ &  $0.051  $ & $ 0.004 $ & $  0.0006$  \\ \hline
				Method \cite{DonHinPap19} & $0.102$ & $0.094$  &  $ 0.004$ & $- $ &  \multicolumn{4}{r|}{proposed Algorithm using exact Bloch}  & $0.084$ & $0.051$ & $0.003$ & $-$ \\	\hline\addlinespace[5pt] 
				\multicolumn{13}{c}{For $25\%$ Cartesian subsampled k-space data with Gaussian noise of mean $0$ and standard deviation $30$.}  \\  				\multicolumn{13}{c}{\vspace{10pt} Relative error computed from $\frac{\norm{x-x^*}}{\norm{x^*}}$ for $x=T_1,\; T_2,\; \rho,\; M$ where $\norm{\cdot}$ is the discrete $2$-norm.}\\ 
		\end{tabular}}
		\caption{Error comparison for qMRI: Using Bloch maps  by networks with different layers, different size of neurons, and a variant of training data}		\label{tab:MRIcomparison}
	\end{center}
\end{table}

Concerning the results reported in Table \ref{tab:MRIcomparison}, the columns of $M(\theta)$ reflect the approximation accuracy to the discrete dynamical Bloch sequences using various neural networks. A smaller value refers to a smaller error, or in other words to higher accuracy in the approximation. However, higher accuracy in the Bloch solution operator approximation does not necessarily result in a better estimation of the $T_1$, $T_2$ parameters.
For this purpose, note that differently to the previous example, here the error is evaluated against the ideal solutions.
The dashes in Table \ref{tab:MRIcomparison} belong to cases where the training data are not sufficient to guarantee well enough learning under the current setting our paper.
We observe that the results are varying slightly under different network architectures and also when using different volumes of training data. In particular, we have the observations: (i) When the training data is sufficiently rich, with the same number of hidden layers, then the larger the number of neurons the better becomes the approximation to the Bloch mapping. However, this does not mean necessarily better to the estimated parameters in terms of the error rates provided.  (ii) We find that the small DoF networks with small volume training data  achieve already almost the same accuracy as the ones using medium and large DoF networks. The results are almost as good as using SQP with the exact Bloch solution formula. We have also observed that the SQP method with learning-based operators can be computationally more efficient than the one with the exact Bloch operators. This is due to the fact that evaluating the learning-based operator can be much cheaper than solving the exact physical model, although a learning process has to be performed before-hand.

In Figures \ref{fig:solutions} and \ref{fig:errors}, we provide visual comparison of results from different methods for quantitative MRI. Particularly, we compare to the method proposed by the authors in \cite{DonHinPap19} assuming knowledge of the exact Bloch solution map and also the BLIP algorithm in \cite{DavPuyVanWia14} in which the fine dictionary (i.e., a large size data set) is used. 

\begin{figure}[!ht]
	\centering
	\includegraphics[width=\textwidth]{./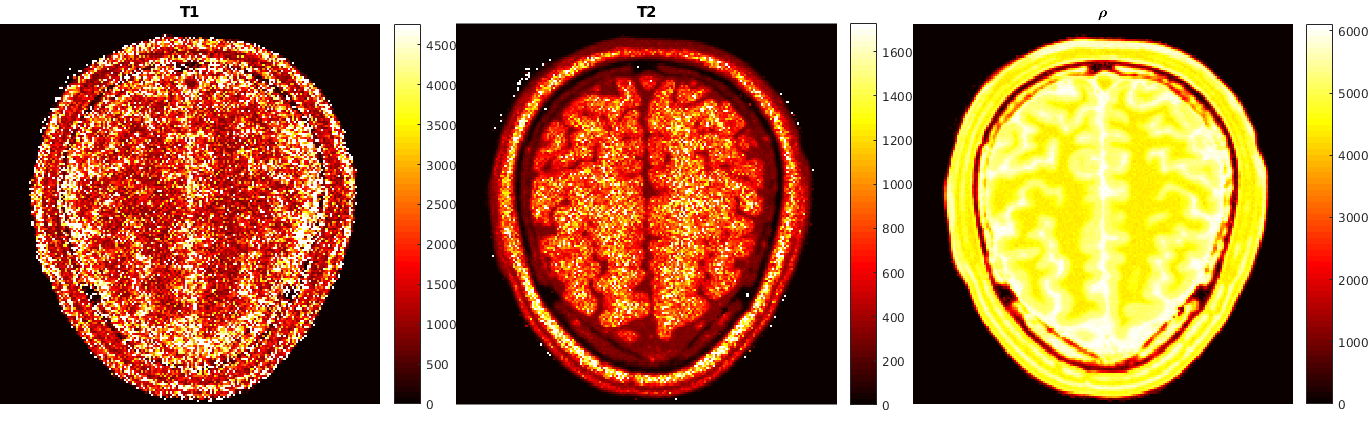}\\
	\vspace*{1pt}
	\includegraphics[width=\textwidth]{./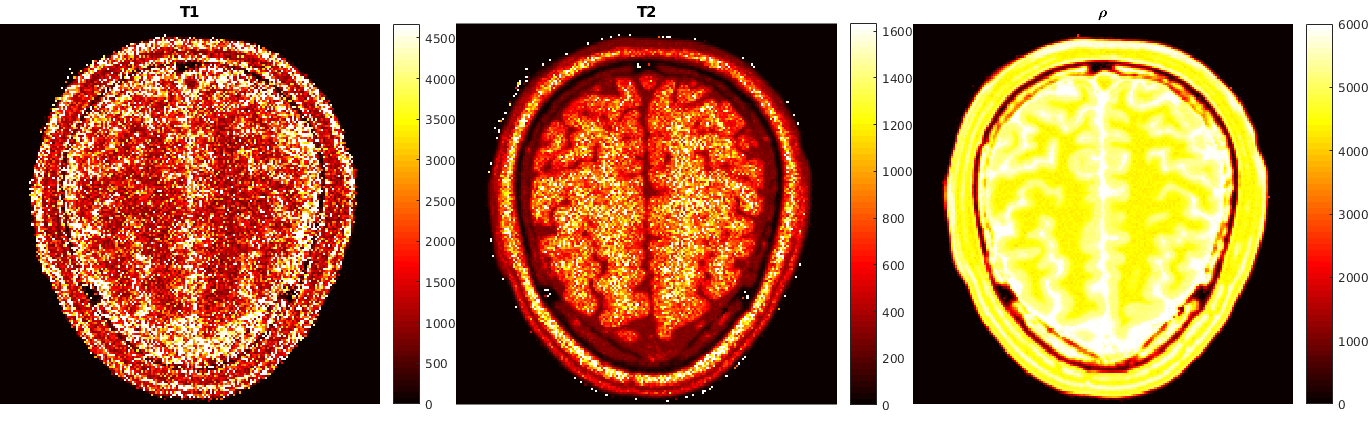}\\
	\vspace*{1pt}
	\includegraphics[width=\textwidth]{./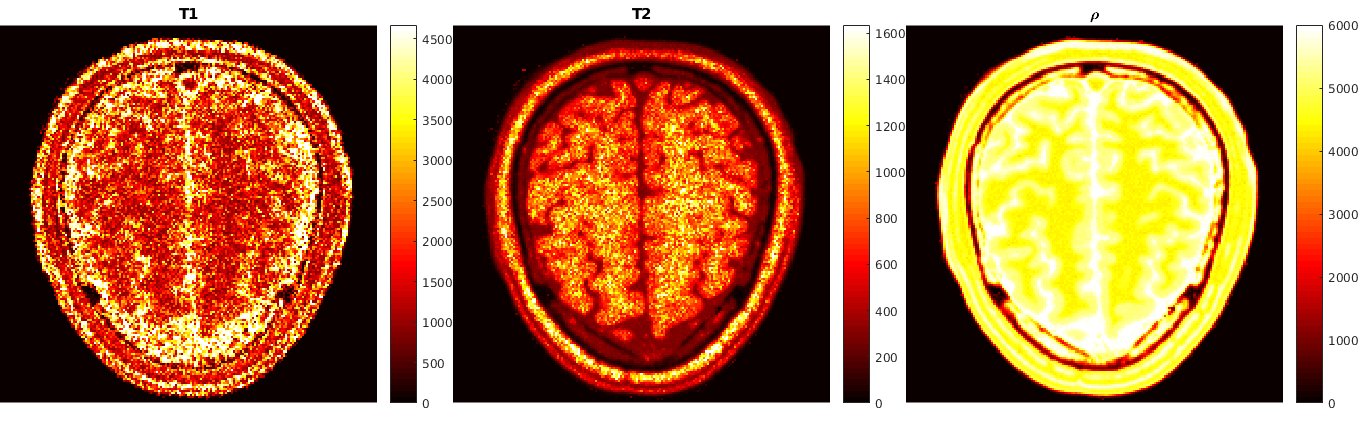}\\
	\vspace*{1pt}
	\includegraphics[width=\textwidth]{./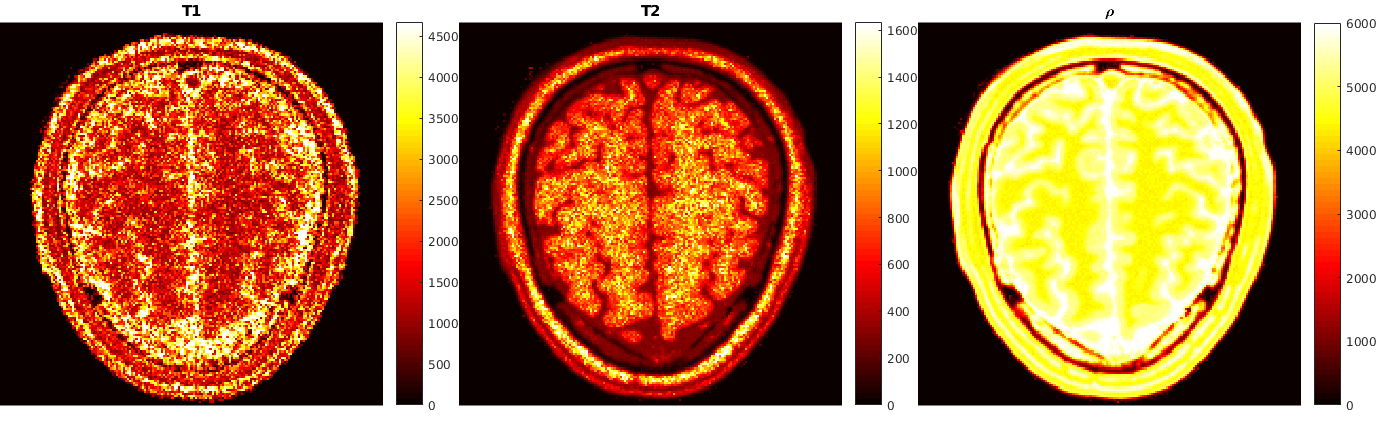}
	\caption{Estimated tissue parameters from subsampled and noisy measurements.
		First row: Solution using the BLIP method in \cite{DavPuyVanWia14} using a fine dictionary; Second row: Solution using method in \cite{DonHinPap19}; Third row: Our SQP solution with learning-informed model small size DoF, 1-hidden-layer residual networks and trained with medium size data.
		Forth row: Our SQP solution using the analytical formula for the Bloch solution map.}
	\label{fig:solutions}
\end{figure}

\begin{figure}[!ht]
	\centering
	\includegraphics[width=\textwidth]{./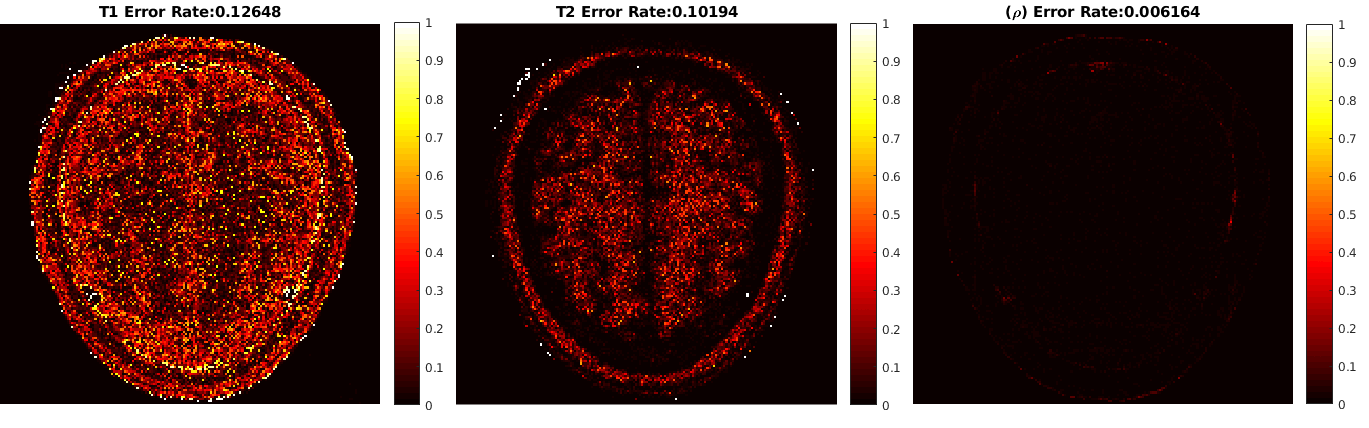}\\
	\vspace*{1pt}
	\includegraphics[width=\textwidth]{./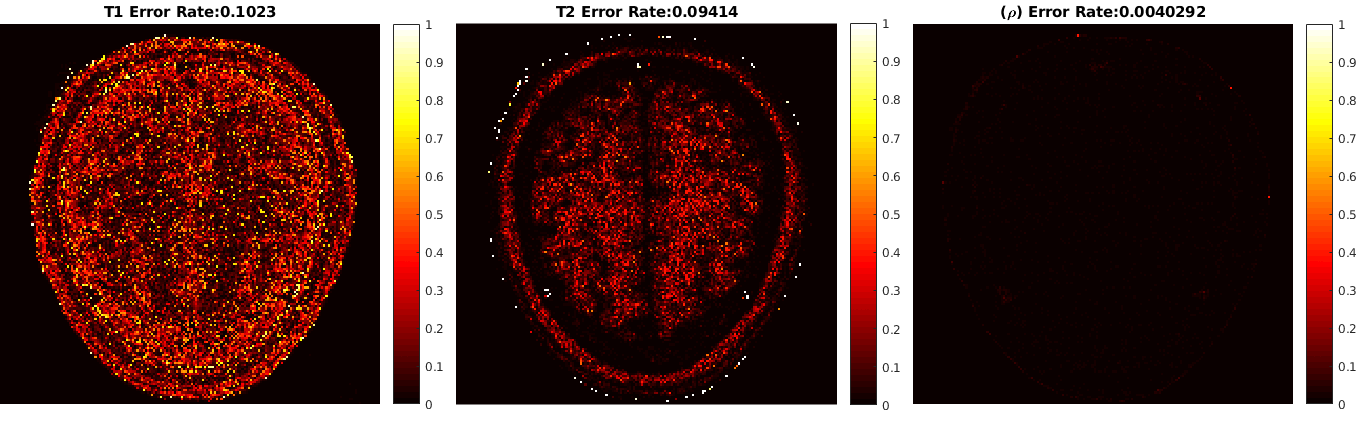}\\
	\vspace*{1pt}
	\includegraphics[width=\textwidth]{./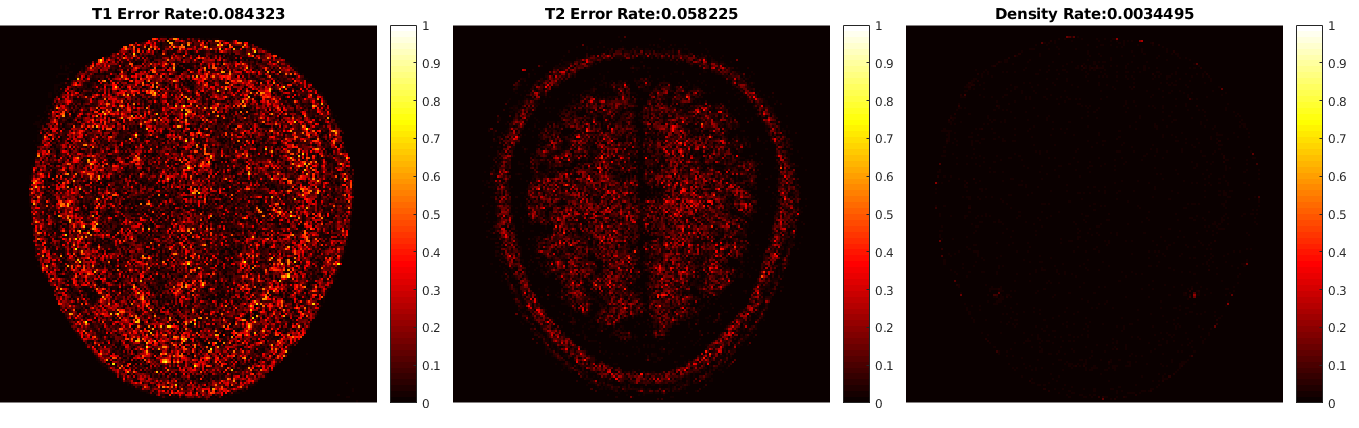}\\
	\vspace*{1pt}
	\includegraphics[width=\textwidth]{./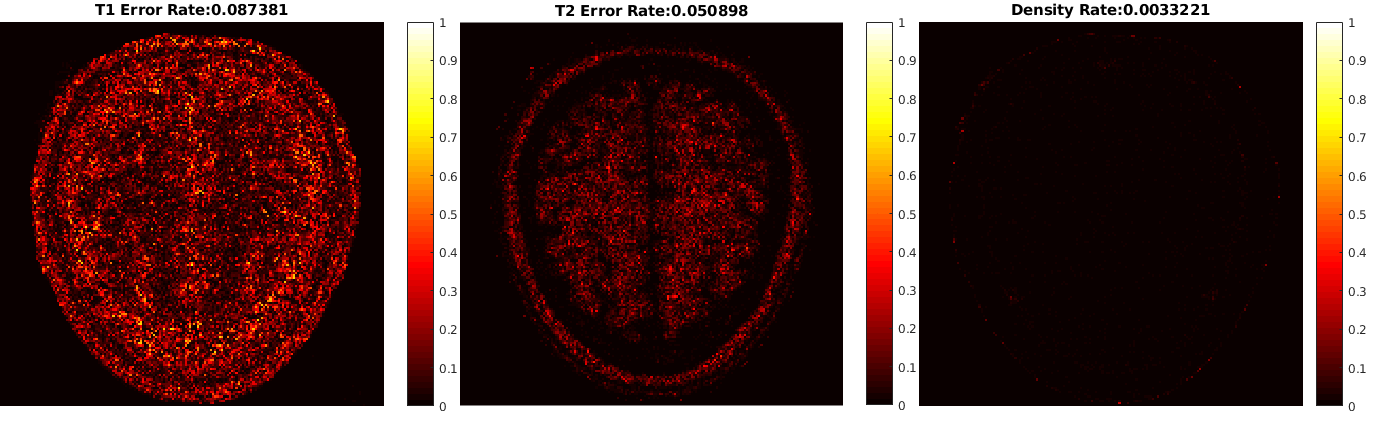}\\
	\caption{Relative errors of the estimated tissue parameters from subsampled and noisy measurements.
		First row: Error map from BLIP \cite{DavPuyVanWia14} using a fine dictionary; Second row: Error map from \cite{DonHinPap19}; Third row: Error map for our SQP solution with learning-informed model. Forth row: Error map for our SQP solution with exact formula for the Bloch map as \cite{DonHinPap19}. All errors are normalized.}
	\label{fig:errors}
\end{figure}

The images produced by the proposed algorithm with a learning-informed model are based on the $1$-hidden-layer network with a small size of DoF which is trained with medium volume data.
We can see that the proposed approach clearly gives better results for the recovering of the quantitative parameters when compared with the methods in \cite{DonHinPap19} and BLIP \cite{DavPuyVanWia14}.
In particular, we observe the $T_1$, $T_2$ parameters estimated by the proposed method are significantly better than the results from the other two methods in terms of spatial regularity. In particular, some artifacts are avoided by the proposed method. This is due to using an $H^1$ term for $u$ in the objective while the method in \cite{DonHinPap19}, for instance, uses an $L^2$ term only.

We notice that the method in \cite{DonHinPap19} is superior only if the noise in the data is small. The learning-informed operator could also be applied yielding results similar to those of the original method \cite{DonHinPap19}.
Since  for real MRI experiments, the $k$-space data may be contaminated by different sources of noise, certain spatial regularization could help to stabilize solutions.
The proposed method in this paper seems to be new to qMRI in this respect, since previous methods typically use pixel-wise estimation so that spatial regularity is harder to enforce.
Along this line, one may consider more sophisticated regularization methods such as, e.g., total variation  or total generalized variation regularization, to take care of spatial discontinuities. Such a study, however, is clearly beyond the scope of the present paper.

\section{Conclusion}

In this paper, we have proposed and analyzed a general optimization scheme for solving optimal control problems subject to constraints which are governed by  learning-informed differential equations. The applications and numerical tests have  verified the feasibility of the proposed scheme for two key applications. We envisage that our work will provide  a fundamental framework for dealing with physical models whose underlying differential equation is partially unknown and thus needed to be learned by data, with the latter typically obtained from experiments or measurements. Our approach avoids learning the full model, i.e., learning directly the solution of the overall minimization problem as this could be on the one hand too complicated and on the other, it could render the method more towards being a black box solver. By learning only a component, i.e., a nonlinearity, or the solution map of the underlying differential equation, the method is kept more faithful to the true physics-based model.

An important factor for the applicability of the proposed framework is the learnability of the operator resulting from differential equations. We observed that  the uniform boundedness of the range of the input and output data (state variable) played a crucial role, stemming from the fact that the density of neural networks holds in the topology of uniform convergence on compact sets. As we observed in the double-well potential example, learning the nonlinearity in its whole range is not necessarily needed,  but only in a range in which the state variables lie, with this range being known due to a priori estimates. Indeed, in the stationary Allen-Cahn control problem, the learning is only performed over a very local part of the nonlinearity (the double-well part), giving an almost perfect result. This shows some potential for reducing the training load by properly analyzing the properties of the nonlinearities.
From the quantitative MRI example we furthermore observed that the embedding of the learned operator in the reconstruction process led to a reduction in the computational load, since it avoids a repetitive solution of the exact physical model. 

A series of future studies arise from the present work.  The analysis implemented here asks for  smooth neural networks approximating (part of) the control-to-state map. A theory incorporating nonsmooth neural networks is an important extension as this will include networks with ReLU activation functions.  Further studies can also incorporate the network structure (in the spirit of optimal experimental design) as well as aspects of the training process into the overall minimization process to further optimize and robustify the new technique.
Finally,  the errors due to the early stopping of the numerical  algorithm as well as due to the ones from the numerical discretization, can be incorporated in the a priori error analysis. This could be of benefit for designing more suitable network architectures.

\subsection*{Acknowledgment}
The authors acknowledge the support of Tsinghua--Sanya International Mathematical Forum (TSIMF), as some of the ideas in the paper were discussed there while all the authors attended the workshop on ``Efficient Algorithms in Data Science, Learning and Computational Physics'' in January 2020.


\begin{thebibliography}{10}

\bibitem{Brainweb}
{\em Brainweb: Simulated brain database},
  http://www.bic.mni.mcgill.ca/brainweb/.

\bibitem{MR2424078}
{\sc R.~Adams and J.~Fournier}, {\em Sobolev spaces}, vol.~140 of Pure and
  Applied Mathematics (Amsterdam), Elsevier/Academic Press, Amsterdam,
  second~ed., 2003.

\bibitem{AdlOek17}
{\sc J.~Adler and O.~\"Oktem}, {\em Solving ill-posed inverse problems using
  iterative deep neural networks}, Inverse Problems, 33 (2017), p.~124007.
\newblock \url{https://doi.org/10.1088/1361-6420/aa9581}.

\bibitem{aliprantis}
{\sc C.~Aliprantis and K.~Border}, {\em Infinite dimensional analysis, a
  Hitchhiker's Guide}, Springer, 2006.

\bibitem{ArrMaaOekSch19}
{\sc S.~Arridge, P.~Maass, O.~\"Oktem, and C.~Sch\"onlieb}, {\em Solving
  inverse problems using data-driven models}, Acta Numerica, 28 (2019),
  pp.~1--174.
\newblock \url{https://doi.org/10.1017/S0962492919000059}.

\bibitem{MR3288271}
{\sc H.~Attouch, G.~Buttazzo, and G.~Michaille}, {\em Variational analysis in
  {S}obolev and {BV} spaces}, vol.~17 of MOS-SIAM Series on Optimization,
  Society for Industrial and Applied Mathematics (SIAM), Philadelphia, PA;
  Mathematical Optimization Society, Philadelphia, PA, second~ed., 2014,
  \url{https://doi.org/10.1137/1.9781611973488}.

\bibitem{BakGupNaiRas17}
{\sc B.~Baker, O.~Gupta, N.~Naik, and R.~Raskar}, {\em Designing neural network
  architectures using reinforcement learning}, Conference paper on
  International Conference on Learning Representations,  (2017), pp.~1--18.
\newblock \url{https://openreview.net/pdf?id=S1c2cvqee}.

\bibitem{Bal_etal18}
{\sc F.~Balsiger, A.~Shridhar~Konar, S.~Chikop, V.~Chandran, O.~Scheidegger,
  S.~Geethanath, and M.~Reyes}, {\em Magnetic resonance fingerprinting
  reconstruction via spatiotemporal convolutional neural networks}, in Machine
  Learning for Medical Image Reconstruction. MLMIR 2018, R.~D. Knoll~F.,
  Maier~A., ed., vol.~11074 of LNCS, Springer, Cham, 2018, pp.~39--46.
\newblock \url{https://doi.org/10.1007/978-3-030-00129-2_5}.

\bibitem{Blo46}
{\sc F.~Bloch}, {\em Nuclear induction}, Physical Review, 70 (1946),
  pp.~460--473, \url{https://doi.org/10.1103/PhysRev.70.460}.

\bibitem{BotCurNoc18}
{\sc L.~Bottou, F.~Curtis, and J.~Nocedal}, {\em Optimization methods for
  large-scale machine learning}, SIAM Review, 60 (2018), pp.~223--311,
  \url{https://doi.org/10.1137/16M1080173}.

\bibitem{braides2014convergence}
{\sc A.~Braides}, {\em Convergence of local minimizers}, in Local Minimization,
  Variational Evolution and $\Gamma$-Convergence, Springer, 2014, pp.~67--78.

\bibitem{Col_etal_98}
{\sc D.~Collins, A.~Zijdenbos, V.~Kollokian, J.~Sled, N.~Kabani, C.~Holmes, and
  A.~Evans}, {\em Design and construction of a realistic digital brain
  phantom}, IEEE Transactions on Medical Imaging, 17 (1998), pp.~463--468.
\newblock \url{https://doi.org/10.1109/42.712135}.

\bibitem{dalmasogamma}
{\sc G.~Dal~Maso}, {\em Introduction to {$\Gamma$}-convergence}, Birkh\"auser,
  1993.

\bibitem{DavPuyVanWia14}
{\sc M.~Davies, G.~Puy, P.~Vandergheynst, and Y.~Wiaux}, {\em A compressed
  sensing framework for magnetic resonance fingerprinting}, SIAM Journal on
  Imaging Sciences, 7 (2014), pp.~2623--2656.
\newblock \url{https://doi.org/10.1137/130947246}.

\bibitem{DirFer95}
{\sc S.~Dirkse and M.~Ferris}, {\em The path solver: A non-monotone
  stabilization scheme for mixed complementarity problems}, Optimization
  Methods and Software, 5 (1995), pp.~123--156.
\newblock \url{https://doi.org/10.1080/10556789508805606}.

\bibitem{DonHinPap19}
{\sc G.~Dong, M.~Hinterm\"uller, and K.~Papafitsoros}, {\em Quantitative
  magnetic resonance imaging: From fingerprinting to integrated physics-based
  models}, SIAM Journal on Imaging Sciences, 12 (2019).
\newblock \url {https://doi.org/10.1137/18M1222211}.

\bibitem{E17}
{\sc W.~E}, {\em A proposal on machine learning via dynamical systems},
  Communications in Mathematics and Statistics, 5 (2017), pp.~1--11.
\newblock \url{https://doi.org/10.1007/s40304-017-0103-z}.

\bibitem{Eva10}
{\sc L.~Evans}, {\em Partial Differential Equations}, vol.~19 of Graduate
  studies in mathematics, American Mathematical Society, second~ed., 2010.

\bibitem{MR1669395}
{\sc H.~Fattorini}, {\em Infinite-dimensional optimization and control theory},
  vol.~62 of Encyclopedia of Mathematics and its Applications, Cambridge
  University Press, Cambridge, 1999.
\newblock \url{https://doi.org/10.1017/CBO9780511574795}.

\bibitem{GooBenCou16}
{\sc I.~Goodfellow, Y.~Bengio, and A.~Courville}, {\em Deep Learning}, MIT
  Press, 2016.
\newblock \url{http://www.deeplearningbook.org}.

\bibitem{HabRut18}
{\sc E.~Haber and L.~Ruthotto}, {\em Stable architectures for deep neural
  networks}, Inverse Problems, 34 (2018), p.~014004.
\newblock \url{https://doi.org/10.1088/1361-6420/aa9a90}.

\bibitem{HanJenE18}
{\sc J.~Han, A.~Jentzen, and W.~E}, {\em Solving high-dimensional partial
  differential equations using deep learning}, Proceeding of National Academy
  of Science, 115 (2018), pp.~8505--8510.
\newblock \url{https://doi.org/10.1073/pnas.1718942115}.

\bibitem{Han10}
{\sc M.~Hanke}, {\em The regularizing {L}evenberg-{M}arquardt scheme is of
  optimal order}, Journal of Integral Equations and Applications, 22 (2010),
  pp.~259--283.
\newblock \url{https://doi.org/10.1216/JIE-2010-22-2-259}.

\bibitem{Hin07}
{\sc M.~Hinterm\"uller}, {\em Mesh independence and fast local convergence of a
  primal-dual active-set method for mixed control-state constrained elliptic
  control problems}, The ANZIAM Journal, 49 (2007), pp.~1--38.
\newblock \url{https://doi.org/10.1017/S1446181100012657}.

\bibitem{HinItoKun02}
{\sc M.~Hinterm\"uller, K.~Ito, and K.~Kunisch}, {\em The primal-dual active
  set strategy as a semismooth {N}ewton method}, SIAM Journal on Optimization,
  13 (2002), pp.~865--888.
\newblock \url{https://doi.org/10.1137/S1052623401383558}.

\bibitem{HiKu-PathII}
{\sc M.~Hinterm\"uller and K.~Kunisch}, {\em Feasible an noninterior
  path-following in constrained minimization with low multiplier regularity},
  SIAM Journal on Control and Optimization, 45 (2006), pp.~1198--1221.
\newblock \url{https://doi.org/10.1137/050637480}.

\bibitem{HinUlb04}
{\sc M.~Hinterm\"uller and M.~Ulbrich}, {\em A mesh-independence result for
  semismooth {N}ewton methods}, Mathematical Programming, Series B, 101 (2004),
  pp.~151--184.
\newblock \url{https://doi.org/10.1007/s10107-004-0540-9}.

\bibitem{LesLinPinSch93}
{\sc M.~Leshno, V.~Lin, A.~Pinkus, and S.~Schocken}, {\em Multilayer
  feedforward networks with a nonpolynomial activation function can approximate
  any function}, Neural Networks, 6 (1993), pp.~861--867.
\newblock \url{https://doi.org/10.1016/S0893-6080(05)80131-5}.

\bibitem{MR0271512}
{\sc J.-L. Lions}, {\em Optimal control of systems governed by partial
  differential equations}, Translated from the French by S. K. Mitter. Die
  Grundlehren der mathematischen Wissenschaften, Band 170, Springer-Verlag, New
  York-Berlin, 1971.

\bibitem{LonLuMaDon18}
{\sc Z.~Long, Y.~Lu, X.~Ma, and B.~Dong}, {\em {PDE}-{N}et: {L}earning {PDE}s
  from data}, Proceedings of Machine Learning Research, 80 (2018),
  pp.~3208--3216.
\newblock \url{http://proceedings.mlr.press/v80/long18a.html}.

\bibitem{LuFle12}
{\sc S.~Lu and J.~Flemming}, {\em Convergence rate analysis of {T}ikhonov
  regularization for nonlinear ill-posed problems with noisy operators},
  Inverse Problems, 28 (2012), p.~104003.
\newblock \url {https://doi.org/10.1088/0266-5611/28/10/104003}.

\bibitem{Ma_etal13}
{\sc D.~Ma, V.~Gulani, N.~Seiberlich, K.~Liu, J.~Sunshine, J.~Duerk, and
  M.~Griswold}, {\em Magnetic resonance fingerprinting}, Nature, 495 (2013),
  pp.~187--193.
\newblock \url {https://dx.doi.org/10.1038/nature11971}.

\bibitem{Mac92}
{\sc D.~MacKay}, {\em Bayesian interpolation}, Neural Computation, 4 (1992),
  pp.~415--447.
\newblock \url{https://doi.org/10.1162/neco.1992.4.3.415}.

\bibitem{MazWeiTalEld18}
{\sc G.~Mazor, L.~Weizman, A.~Tal, and Y.~Eldar}, {\em Low-rank magnetic
  resonance fingerprinting}, Medical Physics, 45 (2018), pp.~4066--4084.
\newblock \url {https://dx.doi.org/10.1002/mp.13078}.

\bibitem{MR2183776}
{\sc P.~Neittaanmaki, J.~Sprekels, and D.~Tiba}, {\em Optimization of elliptic
  systems}, Springer Monographs in Mathematics, Springer, New York, 2006.
\newblock Theory and applications.

\bibitem{NguWid90}
{\sc D.~Nguyen and B.~Widrow}, {\em Improving the learning speed of 2-layer
  neural networks by choosing initial values of the adaptive weights}, 1990
  IJCNN International Joint Conference on Neural Networks, 3 (1990),
  pp.~21--26.
\newblock \url{http://dx.doi.org/10.1109/IJCNN.1990.137819}.

\bibitem{Pin99}
{\sc A.~Pinkus}, {\em Approximation theory of the {MLP} model in neural
  networks}, Acta Numerica, 8 (1999), pp.~143--195.
\newblock \url{https://doi.org/10.1017/S0962492900002919}.

\bibitem{Pow76}
{\sc M.~Powell}, {\em A view of unconstrained optimization}, in Optimization in
  Action, L.~Dixon, ed., Academic Press, London and New York, 1976,
  pp.~117--152.

\bibitem{QinWuXiu18}
{\sc T.~Qin, K.~Wu, and D.~Xiu}, {\em Data driven governing equations
  approximation using deep neural networks}, Journal of Computational Physics,
  395 (2019), pp.~620--635.
\newblock \url{https://doi.org/10.1016/j.jcp.2019.06.042}.

\bibitem{Ral94}
{\sc D.~Ralph}, {\em Global convergence of damped {N}ewton's method for
  nonsmooth equations via the path search}, Mathematics of Operations Research,
  19 (1994), pp.~352--389.
\newblock \url{https://doi.org/10.1287/moor.19.2.352}.

\bibitem{Sche99}
{\sc K.~Scheffler}, {\em A pictorial description of steady-states in rapid
  magnetic resonance imaging}, Concepts in Magnetic Resonance, 11 (1999),
  pp.~187--193.
\newblock
  \url{https://dx.doi.org/10.1002/(SICI)1099-0534(1999)11:5<291::AID-CMR2>3.0.CO;2-J}.

\bibitem{SirSpi18}
{\sc J.~Sirignano and K.~Spiliopoulos}, {\em {DGM}: {A} deep learning algorithm
  for solving partial differential equations}, Journal of Computational
  Physics, 375 (2018), pp.~1339--1364.
\newblock \url{https://doi.org/10.1016/j.jcp.2018.08.029}.

\bibitem{Tes12}
{\sc G.~Teschl}, {\em Ordinary Differential Equations and Dynamical Systems},
  vol.~140 of Graduate Studies in Mathematics, American Mathematical Society,
  first~ed., 2012.

\bibitem{Tro10}
{\sc F.~Tr\"oltsch}, {\em Optimal Control of Partial Differential Equations:
  Theory, Methods and Applications}, vol.~112 of Graduate Studies in
  Mathematics, American Mathematical Society, 2010.

\bibitem{ZowKur79}
{\sc J.~Zowe and S.~Kurcyusz}, {\em Regularity and stability for the
  mathematical programming problem in banach spaces}, Applied Mathematics and
  Optimization, 5 (1970), pp.~49--62.
\newblock \url {https://doi.org/10.1007/BF01442543}.

\end{thebibliography}
\end{document}